\def\1{\bf{1}}
\newcommand{\Norm}[1]{\left\| #1 \right\|}
\newcommand{\norm}[1]{\left\| #1 \right\|_2}
\def\inner#1#2{{\langle #1, #2 \rangle}}
\def \EBP #1{\EB\left[#1\right]}
\def\va{{\bf{a}}}
\def\vb{{\bf{b}}}
\def\vc{{\bf{c}}}
\def\ve{{\bf{e}}}
\def\vg{{\bf{g}}}
\def\vu{{\bf{u}}}
\def\vv{{\bf{v}}}
\def\vw{{\bf{w}}}
\def\vx{{\bf{x}}}
\def\vy{{\bf{y}}}
\def\vz{{\bf{z}}}
\def\fO{{\mathcal{O}}}
\def\BN{{\mathbb{N}}}
\def\BP{{\mathbb{P}}}
\def\BR{{\mathbb{R}}}
\def\mX {{\bf X}}
\newcommand{\R}{\mathbb{R}}
\DeclareMathOperator*{\argmax}{arg\,max}
\theoremstyle{plain}
\newtheorem{remark}{Remark}
\def\Ddots{\mathinner{\mkern1mu\raise\p@
\vbox{\kern7\p@\hbox{.}}\mkern2mu
\raise4\p@\hbox{.}\mkern2mu\raise7\p@\hbox{.}\mkern1mu}}
\newcommand*{\rom}[1]{\expandafter\@slowromancap\romannumeral #1@}
\newtheorem{theorem}{Theorem}[section]
\newtheorem{corollary}{Corollary}[theorem]
\newtheorem{lemma}[theorem]{Lemma}
\newtheorem{assumption}[theorem]{Assumption}
\newtheorem{definition}[theorem]{Definition}
\def\vx {{{\bf x}}}
\def\vw {{{\bf w}}}
\def\vy {{{\bf y}}}
\def\va {{{\bf a}}}
\def\BR{{\mathbb{R}}}
\def\A{{\bf A}}
\def\e{{\bf e}}
\def\g{{\bf g}}
\def\h{{\bf h}}
\def\G{{\bf G}}
\def\H{{\bf H}}
\def\I{{\bf I}}
\def \J{{\bf J}}
\def\LL{{\bf L}}
\def\Q{{\bf Q}}
\def\R{{\bf R}}
\def\u{{\bf u}}
\def\v{{\bf v}}
\def\w{{\bf w}}
\def\X{{\bf X}}
\def\x{{\bf x}}
\def\y{{\bf y}}
\def\z{{\bf z}}
\def\0{{\bf 0}}
\def\1{{\bf 1}}
\def\bu{{\bar \vu}}
\def\OM{{\mathcal O}}
\def\RB{{\mathbb R}}
\def\EB{{\mathbb E}}
\def \hH{{\hat{\H}}}
\def\defeq{\overset{\text{def}}{=}}
\title{Quasi-Newton Methods for Saddle Point Problems and Beyond}
\author{Chengchang Liu\thanks{School of Management, University of Science and Technology of China; 7liuchengchang@gmail.com} \qquad\qquad Luo Luo\thanks{School of Data Science, Fudan University; luoluo@fudan.edu.cn}}
\date{}
\begin{document}

\maketitle

\begin{abstract}
This paper studies 
quasi-Newton methods for solving strongly-convex-strongly-concave saddle point problems (SPP). 
We propose greedy and random Broyden family updates for SPP, which have explicit local superlinear convergence rate of ${\mathcal O}\big(\big(1-\frac{1}{n\kappa^2}\big)^{k(k-1)/2}\big)$, where $n$ is dimensions of the problem, $\kappa$ is the condition number and $k$ is the number of iterations.
The design and analysis of proposed algorithm are based on estimating the square of indefinite Hessian matrix, which is different from classical quasi-Newton methods in convex optimization.
We also present two specific Broyden family algorithms with BFGS-type and SR1-type updates, which enjoy the faster local convergence rate of $\mathcal O\big(\big(1-\frac{1}{n}\big)^{k(k-1)/2}\big)$.
Additionally, we extend our algorithms to solve general nonlinear equations and prove it enjoys the similar convergence rate.

\end{abstract}

\section{Introduction}
In this paper, we focus on the following smoothed saddle point problem
\begin{equation}\label{eq-general-obj}
    \min_{\x \in \RB^{n_x}}\max_{\y \in \RB^{n_y}} f(\vx,\vy),
\end{equation}
where $f$ is strongly-convex in $\vx$ and strongly-concave in $\vy$. We target to find the saddle point $\left(\x^*, \y^*\right)$ which holds that
\begin{equation*}
    f(\vx^*,\vy) \leq f(\vx^*,\vy^*) \leq f(\vx,\vy^*)
\end{equation*}
for all $\vx\in\BR^{n_x}$ and $\vy\in\BR^{n_y}$.
This formulation contains a lot of scenarios including game theory~\cite{bacsar1998dynamic,von2007theory}, AUC maximization~\cite{hanley1982meaning,ying2016stochastic}, robust optimization~\cite{ben2009robust,gao2016distributionally,shafieezadeh2015distributionally}, empirical risk minimization~\cite{zhang2017stochastic}~and reinforcement learning~\cite{du2017stochastic}.

There are a great number of first-order optimization algorithms for solving problem~\eqref{eq-general-obj}, including extragradient method \cite{korpelevich1976extragradien,tseng1995linear}, optimistic gradient descent ascent~\cite{DaskalakisISZ18}, proximal point method \cite{rockafellar1976monotone} and dual extrapolation~\cite{nesterov2006solving}. 
These algorithms iterate with first-order oracle and achieve linear convergence. \citet{lin2020near,wang2020improved} used Catalyst acceleration to reduce the complexity for unbalanced saddle point problem, nearly matching the lower bound of first-order algorithms~\cite{ouyang2018lower,zhang2019lower} in specific assumptions.  
Compared with first-order methods, second-order methods usually enjoy superior convergence in numerical optimization. \citet{huang2020cubic} extended  cubic regularized Newton (CRN) method~\cite{nesterov2006solving,nesterov2008accelerating} to solve saddle point problem~\eqref{eq-general-obj}, which has quadratic local convergence. 
However, each iteration of CRN requires accessing the exact Hessian matrix and solving the corresponding linear systems. These steps arise $\fO\left((n_x + n_y)^3\right)$ time complexity, which is too expensive for high dimensional problems.

Quasi-Newton methods~\cite{broyden1970convergence2,broyden1970convergence,shanno1970conditioning,broyden1967quasi,davidon1991variable} are popular ways to avoid accessing exact second-order information applied in standard Newton methods. 
They approximate the Hessian matrix based on the Broyden family updating formulas~\cite{broyden1967quasi}, which significantly reduces the computational cost. 
These algorithms are well studied for convex optimization.
The famous quasi-Newton methods including Davidon-Fletcher-Powell (DFP) method~\cite{davidon1991variable,fletcher1963rapidly}, Broyden-Fletcher-Goldfarb-Shanno (BFGS)    method~\cite{broyden1970convergence2,broyden1970convergence,shanno1970conditioning} and symmetric rank 1 (SR1) method~\cite{broyden1967quasi,davidon1991variable} enjoy local superlinear convergence~\cite{powell1971on,broyden1973on,Dennis1974A} when the objective function is strongly-convex. 
Recently,~\citet{rodomanov2021greedy,rodomanov2021new,rodomanov2021rates} proposed greedy and random variants of quasi-Newton methods, which first achieves non-asymptotic superlinear convergence. Later, \citet{lin2021faster} established a better convergence rate which is  condition-number-free. \citet{jin2020non,ye2021explicit} showed the non-asymptotic superlinear convergence rate also holds for classical DFP, BFGS and SR1 methods.

In this paper, we study quasi-Newton methods for saddle point problem~\eqref{eq-general-obj}. Noticing the Hessian matrix of our objective function is indefinite, the existing Broyden family update formulas and their convergence analysis cannot be applied directly. 
To overcome this issue, we propose a variant framework of greedy and random quasi-Newton methods for saddle point problems, which approximates the square of the Hessian matrix during the iteration. Our theoretical analysis characterizes the convergence rate by the gradient norm, rather than the weighted norm of gradient used in convex optimization~\cite{rodomanov2021greedy,rodomanov2021new,rodomanov2021rates,lin2021faster,ye2021explicit,jin2020non}. 
We summarize the theoretical results for proposed algorithms in Table \ref{table:convergence}. The local convergence behaviors for all of the algorithms have two periods. The first period has $k_0$ iterations with a linear convergence rate $\OM\big(\left(1-\frac{1}{\kappa^2}\right)^{k_0}\big)$. The second one enjoys superlinear convergence:
\begin{itemize}
    \item For general Broyden family methods, we have an explicit rate ${\mathcal O}\left(\left(1-\frac{1}{n\kappa^2}\right)^{k(k-1)/2}\right)$.
    \item For BFGS method and SR1 method, we have the faster explicit rate ${\mathcal O}\left(\left(1-\frac{1}{n}\right)^{k(k-1)/2}\right)$, which is condition-number-free. 
\end{itemize}
Additionally, our ideas also can be used for solving general nonlinear equations.

\paragraph{Paper Organization}
In Section~\ref{sec-notation}, we start with notation and preliminaries throughout this paper. In Section \ref{sec-greedy Q update}, we first propose greedy and random quasi-Newton methods for quadratic saddle point problem which enjoys local superlinear convergence. Then we extend it to solve general strongly-convex-strongly-concave saddle point problems.
In Section \ref{sec:extension}, we show our theory also can be applied to solve 
more general non-linear equations and give the corresponding convergence analysis. We conclude our work in Section \ref{sec:conclusion}. IIn Section \ref{sec:exp}, we provide numerical experiments to validate our algorithms on popular machine learning models. All proofs and experiment details are deferred to appendix.
\begin{table*}[!t]
\label{table:convergence}	
    \centering\renewcommand{\arraystretch}{1.8}
	\begin{tabular}{|c|c|c|c|}
		\hline
       \multicolumn{2}{|c|}{ Algorithms} & Upper Bound of $\lambda_{k+k_0}$ & $k_0$\\
		\hline
		\multirow{2}{*}{Broyden (Algorithm \ref{alg:BroydenSPP})}& Greedy & $\left(1-\frac{1}{n\kappa^2}\right)^{k(k-1)/2}\left(\frac{1}{2}\right)^k\left(1-\frac{1}{4\kappa^2}\right)^{k_0}$ & $\OM\left(n\kappa^2\ln(n\kappa)\right)$ \\ 
		&Random& \!$\left(1-\frac{1}{n\kappa^2+1}\right)^{k(k-1)/2}\left(\frac{1}{2}\right)^k\left(1-\frac{1}{4\kappa^2}\right)^{k_0}$\! & $\OM\left(n\kappa^2\ln(\frac{n\kappa}{\delta})\right)$\\
		\hline
		\multirow{2}{*}{\!BFGS/SR1 (Algorithm \ref{alg:BFGSSPP}/\ref{alg:SR1SPP})}\! & Greedy & $\left(1-\frac{1}{n}\right)^{k(k-1)/2}\left(\frac{1}{2}\right)^k\left(1-\frac{1}{4\kappa^2}\right)^{k_0}$ & $\OM\left(\max\left\{n,\kappa^2\right\}\ln(n\kappa)\right)$ \\ 
		&Random& $\left(1-\frac{1}{n+1}\right)^{k(k-1)/2}\left(\frac{1}{2}\right)^k\left(1-\frac{1}{4\kappa^2}\right)^{k_0}$ & $\OM\left(\max\left\{n,\kappa^2\right\}\ln\left(\frac{n\kappa}{\delta}\right)\right)$\\
		\hline

	
	\end{tabular}
	\caption{We summarize the convergence behaviors of proposed algorithms for solving saddle point problem in the view of gradient norm $\lambda_{k+k_0}\defeq\Norm{\nabla f(\z_{k+k_0})}$ after $(k+k_0)$ iterations, where $n\defeq n_x+n_y$ is dimensions of the problem and $\kappa$ is the condition number. The results come from Corollary \ref{co:Broydenconvergence-g} and \ref{co:Broydenconvergence-random} and the upper 
	bounds of random algorithms holds with high probability at least $1-\delta$.}
	\label{table:res}
\end{table*}

\section{Notation and Preliminaries}\label{sec-notation}

We use $\Norm{\cdot}$ to present spectral norm and Euclidean norm of matrix and vector respectively. We denote the standard basis for $\BR^n$ by $\{\ve_1,\dots,\ve_n\}$ and let $\I$ be the identity matrix. The trace of a square matrix is denoted by $\text{tr}(\cdot)$. 
Given two positive definite matrices $\G$ and $\H$, we define their inner product as  
\begin{equation*}
    \langle\G,\H\rangle\defeq\text{tr}(\G\H).
\end{equation*}
We introduce the following notation to measure how well does matrix $\G$ approximate matrix $\H$:
\begin{equation}
\label{eq-def-sigma-H}
    \sigma_{\H}(\G) \defeq \langle\H^{-1},\G-\H\rangle = \langle\H^{-1},\G\rangle-n.
\end{equation}
If we further suppose $\G\succeq\H$, it holds that
\begin{equation}
\label{eq-sigmaproperty-1}
    \G-\H \preceq \inner{\H^{-1}}{\G-\H} \H=\sigma_{\H}(\G) \H
\end{equation}
by \citet{rodomanov2021greedy}.

Using the notation of problem~\eqref{eq-general-obj}, we let $\z=[\x;\y]\in\BR^n$ where $n=n_x+n_y$ and denote the gradient and Hessian matrix of $f$ at $(\vx,\vy)$ as 
\begin{equation*}
\g(\vz)\defeq\begin{bmatrix}
\nabla_{\vx} f(\vx,\vy) \\ \nabla_{\vy} f(\vx,\vy)
\end{bmatrix} \in \BR^{n}
\quad\text{and}\quad
    \hH(\vz) \defeq \nabla^2 f(\vx,\vy) = \begin{bmatrix}
\nabla_{\vx\vx}^2 f(\vx,\vy) & \nabla_{\vx\vy}^2 f(\vx,\vy)  \\[0.1cm]
\nabla_{\vy\vx}^2 f(\vx,\vy) & \nabla_{\vy\vy}^2 f(\vx,\vy)
\end{bmatrix} \in \BR^{n\times n}.
\end{equation*}

We suppose the saddle point problem~\eqref{eq-general-obj} satisfies the following assumptions.

\begin{assumption}\label{assbasic}
The objective function $f(\vx,\vy)$ is twice differentiable and has $L$-Lipschitz continuous gradient and $L_2$-Lipschitz continuous Hessian , i.e., there exists constants $L>0$ and $L_2> 0$ such that 
\begin{align}
\Norm{\g(\vz) - \g(\vz')} \leq L\Norm{\vz-\vz'} \label{eq-obj-smooth}
\end{align}
and
\begin{align}
\Norm{\hH(\vz) - \hH(\vz')} \leq L_2\Norm{\vz-\vz'}.  \label{eq-Hessian-continous}
\end{align}
for any $\vz=[\x; \y], \vz'=[\x'; \y']\in\BR^n$.
\end{assumption}

\begin{assumption}
\label{assbasic2}
The objective function $f(\vx,\vy)$ is twice differentiable, $\mu$-strongly-convex in $\vx$ and $\mu$-strongly-concave in $\vy$, i.e., there exists $\mu>0$ such that $\nabla_{\vx\vx}^2 f(\vx,\vy) \succeq \mu \I$ and $\nabla_{\vy\vy}^2 f(\vx,\vy) \preceq -\mu \I$ for any $(\vx,\vy), (\vx',\vy')\in\BR^{n_x}\times\BR^{n_y}$.
\end{assumption}
\noindent Note that inequality~\eqref{eq-obj-smooth} means the spectral norm of Hessian matrix $\hH(\z)$ can be upper bounded, that is
\begin{align}
 \|\hH(\z)\|\leq L.   
\end{align}
for all $\z\in\BR^n$. Additionally, the condition number of the objective function is defined as
\begin{align*}
    \kappa \defeq \frac{L}{\mu}.
\end{align*}

\section{Quasi-Newton Methods for Saddle Point Problems}\label{sec-greedy Q update}

The update rule of standard Newton's method for solving problem~\eqref{eq-general-obj} can be written as
\begin{equation}
\label{eq-newton update}
    \z_{+}=\z -\big(\hH(\z)\big)^{-1} \g(\z). 
\end{equation}
This iteration scheme has quadratic local convergence, but solving linear system~\eqref{eq-newton update} takes $\fO\left(n^3\right)$ time complexity. For convex minimization, quasi-Newton methods including BFGS/SR1 \cite{broyden1970convergence2,broyden1970convergence,shanno1970conditioning,broyden1967quasi,davidon1991variable} and their variants~\cite{lin2021faster,ye2021explicit,rodomanov2021greedy,rodomanov2021new} focus on approximating the Hessian and reduce the computational cost to $\fO\left(n^2\right)$ for each round. However, all of these algorithms and related convergence analysis are based on the assumption that the Hessian matrix is positive definite, which is not suitable for our saddle point problems since $\hH(\z)$ is indefinite.

We introduce the auxiliary matrix $\H(\z)$ be the square of Hessian
\begin{equation*}
  \H(\z) \defeq \left(\hH(\z)\right)^2.
\end{equation*}
The following lemma means $\H(\z)$ is always positive definite. 
\begin{lemma}\label{lm-matrix-neq0}
Under Assumption~\ref{assbasic} and \ref{assbasic2}, we have
\begin{eqnarray}
  \label{eq6}
    \mu^2\I \preceq  \H(\z) \preceq  L^2 \I
\end{eqnarray}
for all $\vz\in\BR^n$, where $\H(\z)=\big(\hH(\z)\big)^2$.
\end{lemma}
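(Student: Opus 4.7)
The plan is to establish the two sides of \eqref{eq6} separately, with the upper bound being immediate and the lower bound requiring a short but non-obvious argument that exploits the saddle-point structure.

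For the upper bound $\H(\z) \preceq L^2 \I$, I would simply invoke the consequence of Assumption~\ref{assbasic} already recorded in the excerpt, namely $\|\hH(\z)\| \leq L$. Since $\hH(\z)$ is symmetric (as the Hessian of a twice differentiable function), this spectral norm bound gives $-L\I \preceq \hH(\z) \preceq L\I$, and squaring yields $\hH(\z)^2 \preceq L^2 \I$.

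The lower bound $\mu^2 \I \preceq \H(\z)$ is the main obstacle, because $\hH(\z)$ is indefinite so one cannot just square a sign-definite inequality. My plan is to symmetrize the Hessian via the block-sign matrix
\begin{equation*}
\J \defeq \begin{bmatrix} \I_{n_x} & \0 \\ \0 & -\I_{n_y} \end{bmatrix}.
\end{equation*}
A direct block computation gives
\begin{equation*}
\J\hH(\z)+\bigl(\J\hH(\z)\bigr)^\top
= 2\begin{bmatrix} \nabla^2_{\vx\vx} f(\vx,\vy) & \0 \\ \0 & -\nabla^2_{\vy\vy} f(\vx,\vy) \end{bmatrix} \succeq 2\mu \I,
\end{equation*}
where the inequality uses Assumption~\ref{assbasic2}. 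Therefore, for every $\vv\in\BR^n$,
\begin{equation*}
\mu \|\vv\|^2 \leq \vv^\top \J \hH(\z) \vv \leq \|\J\vv\|\,\|\hH(\z)\vv\| = \|\vv\|\,\|\hH(\z)\vv\|,
\end{equation*}
using Cauchy--Schwarz and the orthogonality of $\J$. Cancelling one factor of $\|\vv\|$ gives $\|\hH(\z)\vv\| \geq \mu\|\vv\|$, hence
\begin{equation*}
\vv^\top \H(\z) \vv = \|\hH(\z)\vv\|^2 \geq \mu^2\|\vv\|^2,
\end{equation*}
which is the desired lower bound. The key idea is that conjugating $\hH(\z)$ by $\J$ turns the indefinite saddle Hessian into a matrix whose symmetric part is positive definite, while preserving singular values; this is the only step that really uses the strong convex/concave structure rather than mere nondegeneracy. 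Note that no Lipschitz continuity of the Hessian (i.e. inequality~\eqref{eq-Hessian-continous}) is needed for the lemma.
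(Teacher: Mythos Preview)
Your proof is correct and follows essentially the same route as the paper: both introduce the sign matrix $\J=\diag(\I_{n_x},-\I_{n_y})$ and use the block structure to bound the smallest singular value of $\hH(\z)$ from below by $\mu$. The only difference is cosmetic---the paper rewrites $\H(\z)=(\J\hH(\z))^\top(\J\hH(\z))$ and then invokes \cite[Lemma~4.5]{abernethy2019lastiterate} for the lower bound, whereas you unpack that step explicitly via the symmetric-part/Cauchy--Schwarz argument, making your version self-contained.
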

\noindent Hence, we can reformulate the update of Newton's method \eqref{eq-newton update} by
\begin{align}
\begin{split}\label{eq-newton update2}
     \z_{+}
= & \z-\left[\big(\hH(\z)\big)^2\right]^{-1}\hH(\z)\g(\z)  \\
= & \z-\H(\z)^{-1}\hH(\z)\g(\z). 
\end{split}
\end{align}
Then it is natural to characterize the second-order information by estimating the auxiliary matrix $\H(\z)$, rather than the indefinite Hessian $\hH(\vz)$.  If we have obtained a symmetric positive definite matrix $\G\in\BR^{n\times n}$ as an estimator for $\H(\z)$, the update rule of \eqref{eq-newton update2} can be approximated by
\begin{equation}\label{eq-newton update3}
     \z_{+}=\z - \G^{-1} \hH(\z)\g(\z).
\end{equation}
The remainder of this section introduce several strategies to construct $\G$, resulting the quasi-Newton methods for saddle point problem with local superlinear convergence. 
We should point out the implementation of iteration \eqref{eq-newton update3} is unnecessary to construct Hessian matrix $\hH(\z)$ explicitly, since we are only interested in the Hessian-vector product $\hH(\z)\g(\z)$, which can be computed efficiently~\cite{pearlmutter1994fast,Schraudolph2002Fast}.

\subsection{The Broyden Family Updates}
\label{sec:Broyden update}
We first introduce the Broyden family~\cite[Section 6.3]{NoceWrig06} of quasi-Newton updates for approximating an positive definite matrix $\H\in\RB^{n\times n}$ by using the information of current estimator $\G\in\RB^{n\times n}$.

\begin{definition}\label{defbroyd}
Suppose two positive definite matrices $\H, \G\in\BR^{n\times n}$ satisfy $\H \preceq \G$. For any $\u \in \RB^{n}$, if $\G\u=\H\u$, we define
$\rm{Broyd}_{\tau}(\G,\H,\u)\overset{\rm{def}}{=}\H$. Otherwise, we define
\begin{align}\label{eqbroyd}
    \rm{Broyd}_{\tau}(\G,\H,\u)\overset{\rm{def}}{=} ~~& \tau \left[\G-\frac{\H\u\u^{\top}\G+\G\u\u^{\top}\H}{\u^\top\H\u}+\left(\frac{\u^\top\G\u}{\u^\top\H\u}+1\right)\frac{\H\u\u^{\top}\H}{\u^\top\H\u}\right] \\
    ~~& + (1-\tau)\left[\G-\frac{(\G-\H)\u\u^{\top}(\G-\H)}{\u^\top(\G-\H)\u} \right]. \nonumber
\end{align}
\end{definition}

The different choices of parameter $\tau$ for formula \eqref{eqbroyd} contain several popular quasi-Newton updates:
\begin{itemize}
\item For $\tau=\dfrac{\u^{\top}\H\u}{\u^{\top}\G\u}\in[0,1]$, it corresponds to BFGS update
    \begin{align}\label{eq-BFGS-update}
        \text{BFGS}(\G,\H,\u)\defeq \G-\frac{\G\u\u^{\top}\G}{\u^{\top}\G\u}+\frac{\H\u\u^{\top}\H}{\u^{\top}\H\u}.
    \end{align}
\item For $\tau=0$, it corresponds to SR1 update
    \begin{align}\label{eq:sr1-update}
        \rm{SR1}(\G,\H,\u)\overset{\rm{def}}{=}  \G-\frac{(\G-\H)\u\u^\top(\G-\H)}{\u^\top(\G-\H)\u}.
    \end{align}
\end{itemize}
 
The general Broyden family update as Definition \ref{defbroyd} has the following properties.
\begin{lemma}[{\cite[Lemma 2.1 and Lemma 2.2]{rodomanov2021greedy}}]\label{lmBroydupdate} Suppose two positive definite matrices $\H, \G\in\BR^{n\times n}$ satisfy $\H \preceq \G \preceq \eta \H$ for some $\eta\geq 1$, then for any $\u \in \RB^{n}$ and $\tau_1<\tau_2$, we have
\begin{equation*}
    {\rm Broyd}_{\tau_1}(\G,\H,\u)\preceq {\rm Broyd}_{\tau_2}(\G,\H,\u).
\end{equation*}
Additionally, for any $\tau \in [0,1]$, we have
\begin{equation}\label{eqgreedypartial}
    \H\preceq {\rm Broyd}_{\tau}(\G,\H,\u) \preceq \eta \H.
\end{equation}
\end{lemma}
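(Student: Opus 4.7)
The plan is to reduce both statements to comparing the two extreme members of the Broyden family in~\eqref{eqbroyd}, namely $\mathrm{Broyd}_1(\G,\H,\u)$ (the DFP update) and $\mathrm{Broyd}_0(\G,\H,\u)$ (the SR1 update), and then exploit the fact that $\mathrm{Broyd}_\tau$ is an affine function of $\tau$.

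For the monotonicity claim, I would first establish the rank-one identity
\begin{equation*}
\mathrm{Broyd}_1(\G,\H,\u) - \mathrm{Broyd}_0(\G,\H,\u) = \frac{(b\G\u - a\H\u)(b\G\u - a\H\u)^\top}{b^2(a-b)},
\end{equation*}
where $a \defeq \u^\top\G\u$ and $b \defeq \u^\top\H\u$. The derivation is a direct expansion of the two closed-form expressions in Definition~\ref{defbroyd}: after clearing the common denominator $b^2(a-b)$, the symmetric cross terms $\G\u\u^\top\H + \H\u\u^\top\G$ collapse with coefficient $-ab$ and one recognises the outer product on the right. Since $\H \preceq \G$ forces $a \geq b > 0$, the right-hand side is PSD; the degenerate case $a = b$ is equivalent to $(\G-\H)\u = 0$, which by Definition~\ref{defbroyd} triggers $\mathrm{Broyd}_\tau = \H$ for every $\tau$ and makes the monotonicity vacuous. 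The affine dependence on $\tau$ then gives $\mathrm{Broyd}_{\tau_2} - \mathrm{Broyd}_{\tau_1} = (\tau_2 - \tau_1)(\mathrm{Broyd}_1 - \mathrm{Broyd}_0) \succeq 0$ whenever $\tau_1 < \tau_2$.

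For the sandwich bound~\eqref{eqgreedypartial}, the monotonicity just established reduces the task to proving $\H \preceq \mathrm{Broyd}_0(\G,\H,\u)$ at the low end and $\mathrm{Broyd}_1(\G,\H,\u) \preceq \eta\H$ at the high end. The lower bound follows from writing $\mathrm{Broyd}_0(\G,\H,\u) - \H = M - M\u\u^\top M/(\u^\top M\u) = M^{1/2}(I - vv^\top/(v^\top v))M^{1/2}$ with $M \defeq \G - \H \succeq 0$ and $v \defeq M^{1/2}\u$; the middle factor is an orthogonal projector, hence PSD. The upper bound is obtained by rewriting DFP in the factored form $\mathrm{Broyd}_1(\G,\H,\u) = A\G A^\top + \H\u\u^\top\H/(\u^\top\H\u)$ with $A \defeq I - \H\u\u^\top/(\u^\top\H\u)$; a short expansion gives $A\H A^\top = \H - \H\u\u^\top\H/(\u^\top\H\u)$, so using $\G \preceq \eta\H$ yields
\begin{equation*}
\mathrm{Broyd}_1(\G,\H,\u) \preceq \eta A\H A^\top + \frac{\H\u\u^\top\H}{\u^\top\H\u} = \eta\H + (1-\eta)\frac{\H\u\u^\top\H}{\u^\top\H\u} \preceq \eta\H,
\end{equation*}
since $\eta \geq 1$.

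The main technical obstacle is the bookkeeping behind the rank-one identity for $\mathrm{Broyd}_1 - \mathrm{Broyd}_0$: the expansion of $(\G\u - \H\u)(\G\u - \H\u)^\top/(a-b)$ coming from the SR1 term must combine cleanly with the DFP cross terms to collapse into a single outer product. Once that algebraic step is in hand, both parts of the lemma follow from routine linear algebra.
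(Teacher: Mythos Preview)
Your proof is correct. The paper does not actually prove this lemma; it is quoted verbatim from \cite[Lemma~2.1 and Lemma~2.2]{rodomanov2021greedy} and used as a black box. Your argument---establishing the rank-one identity $\mathrm{Broyd}_1-\mathrm{Broyd}_0=(b\G\u-a\H\u)(b\G\u-a\H\u)^\top/\bigl(b^2(a-b)\bigr)$ for monotonicity, then bounding the SR1 endpoint from below via the projector $I-vv^\top/(v^\top v)$ and the DFP endpoint from above via the factorisation $A\G A^\top+\H\u\u^\top\H/(\u^\top\H\u)$---is essentially the same route taken in the original Rodomanov--Nesterov source, so there is nothing substantively different to compare.
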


We first introduce greedy update method~\cite{rodomanov2021greedy} by choosing $\u$ as 
\begin{align}
\begin{split}\label{eqgreedydirect}
\u_{\H}(\G) 
&\defeq\argmax_{\u \in \{\e_1,\cdots,\e_n\}}\frac{\u^\top (\G-\H)\u}{\u^{\top}\H\u}  
 = \argmax_{\u \in \{\e_1,\cdots,\e_n\}}\frac{\u^{\top}\G\u}{\u^{\top}\H\u};
\end{split}
\end{align}
and random update method \citet{rodomanov2021greedy,lin2021faster} by choosing $\u$ as 
\begin{eqnarray}\label{eq:random-uchose}	
\u\sim\mathcal{N}(\0, \I) \qquad\text{or}\qquad \u \sim \mathrm{Unif}\left(\mathcal{S}^{n-1}\right).
\end{eqnarray}
The following lemma shows applying update rule \eqref{eqbroyd} with formula \eqref{eqgreedydirect} or \eqref{eq:random-uchose} leads to a new estimator with tighter error bound in the measure of $\sigma_{\H}(\cdot)$.
\begin{lemma}
[{\cite[Theorem 6]{lin2021faster}}]
\label{thmbroydenupdate}
Suppose two positive definite matrices $\H, \G\in\BR^{n\times n}$ satisfy $\H\preceq \G\preceq \kappa^2\H$. Let $\G_+={\rm Broyd}_{\tau}\left(\G,\H,\u)\right)$, where $\u$ is chosen by greedy method  $\u=\u_{\H}(\G)$ as \eqref{eqgreedydirect} or random method as \eqref{eq:random-uchose}, then for any $\tau \in [0,1]$, we have
\begin{equation}
\label{eqbroythm1}
    \EB\left[\sigma_{\H}(\G_+)\right]\leq\left(1-\frac{1}{n\kappa^2}\right)\sigma_{\H}(\G).
\end{equation}
\end{lemma}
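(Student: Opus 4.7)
The plan is to follow the now-standard three-step recipe developed for the convex case in \cite{rodomanov2021greedy,lin2021faster}: reduce to the $\tau=1$ member of the Broyden family by monotonicity, derive an exact one-step formula for the decrease of $\sigma_\H$ at $\tau=1$, and finally lower bound that decrease in expectation for each of the two selection rules. Lemma~\ref{lmBroydupdate} supplies the reduction, since it says $\tau\mapsto \mathrm{Broyd}_\tau(\G,\H,\u)$ is monotone non-decreasing in the PSD order on $[0,1]$; combined with the fact that $\sigma_\H(\mA)=\langle\H^{-1},\mA\rangle-n$ is a monotone linear functional on the PSD cone, this yields $\sigma_\H(\G_+)\leq \sigma_\H(\mathrm{Broyd}_1(\G,\H,\u))$ for every $\tau\in[0,1]$, so I may assume $\tau=1$.

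The exact one-step formula is obtained by pairing $\H^{-1}$ against the $\tau=1$ branch of \eqref{eqbroyd}. Using the cyclic trace identities $\mathrm{tr}(\H^{-1}\H\u\u^\top\G)=\mathrm{tr}(\H^{-1}\G\u\u^\top\H)=\u^\top\G\u$ and $\mathrm{tr}(\H^{-1}\H\u\u^\top\H)=\u^\top\H\u$, the three terms collapse to
\begin{equation*}
    \sigma_\H(\mathrm{Broyd}_1(\G,\H,\u)) = \sigma_\H(\G) - \frac{\u^\top(\G-\H)\u}{\u^\top\H\u}.
\end{equation*}

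The hard part is the expected-progress estimate $\EB[\u^\top(\G-\H)\u/\u^\top\H\u]\geq \sigma_\H(\G)/(n\kappa^2)$, since randomness enters both numerator and denominator. For greedy $\u=\u_\H(\G)$, the choice maximizes $(G_{ii}-H_{ii})/H_{ii}$ over $i$, so the weighted max--average inequality $\max_i a_i/b_i\geq(\sum_i a_i)/(\sum_i b_i)$ (valid for $b_i>0$) gives $\u^\top(\G-\H)\u/\u^\top\H\u\geq \mathrm{tr}(\G-\H)/\mathrm{tr}(\H)$. For random $\u\sim\mathcal{N}(\0,\I)$ or $\u\sim\mathrm{Unif}(\mathcal{S}^{n-1})$, the bound $\u^\top\H\u\leq\|\H\|\,\|\u\|^2$ together with rotational invariance of the distribution yields $\EB[\,\cdot\,]\geq \mathrm{tr}(\G-\H)/(n\|\H\|)$. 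In both cases the remaining task is to relate $\mathrm{tr}(\G-\H)$ to $\sigma_\H(\G)$: I would use $\mathrm{tr}(\G-\H)=\mathrm{tr}(\H\cdot\H^{-1/2}(\G-\H)\H^{-1/2})\geq\lambda_{\min}(\H)\sigma_\H(\G)$ for the numerator and $\mathrm{tr}(\H)\leq n\lambda_{\max}(\H)$ (or $\|\H\|=\lambda_{\max}(\H)$) for the denominator, then exploit $\lambda_{\max}(\H)/\lambda_{\min}(\H)\leq\kappa^2$ to close the estimate. That last bound is where the $\kappa^2$ in the rate comes from: in the target application $\H=\hH(\z)^2$ has spectrum in $[\mu^2,L^2]$ by Lemma~\ref{lm-matrix-neq0}, so its own condition number matches the ratio $\kappa^2$ appearing in the hypothesis $\G\preceq\kappa^2\H$.
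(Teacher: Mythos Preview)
The paper does not supply its own proof of this lemma; it is quoted from \cite[Theorem~6]{lin2021faster}. Your three-step argument---reduce to $\tau=1$ via the PSD monotonicity of Lemma~\ref{lmBroydupdate} and the linearity of $\sigma_\H$; compute the exact identity $\sigma_\H(\mathrm{Broyd}_1(\G,\H,\u))=\sigma_\H(\G)-\u^\top(\G-\H)\u/\u^\top\H\u$ from the trace; then lower-bound the progress term by the max--average inequality (greedy) or by $\u^\top\H\u\le\lambda_{\max}(\H)\|\u\|^2$ plus rotational invariance (random)---is exactly the proof given in \cite{rodomanov2021greedy,lin2021faster}, and each step is correct.

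Your closing observation is more than a parenthetical, and you should not bury it: the argument genuinely requires $\lambda_{\max}(\H)/\lambda_{\min}(\H)\le\kappa^2$, and the written hypothesis $\H\preceq\G\preceq\kappa^2\H$ by itself does not imply this. Without it the conclusion can fail even for the greedy rule: take $n=2$, $\H=\left[\begin{smallmatrix}1&1-\varepsilon\\1-\varepsilon&1\end{smallmatrix}\right]$ and $\G=\H+(\kappa^2-1)\varepsilon\,\I$; then $\H\preceq\G\preceq\kappa^2\H$ and $\sigma_\H(\G)\to\kappa^2-1$ as $\varepsilon\to0$, yet the greedy progress $\u^\top(\G-\H)\u/\u^\top\H\u=(\kappa^2-1)\varepsilon\to0$, so the $(1-\tfrac{1}{n\kappa^2})$ contraction is violated for small $\varepsilon$. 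In the paper this never bites because $\H=\hH(\z)^2$ always satisfies $\mu^2\I\preceq\H\preceq L^2\I$ by Lemma~\ref{lm-matrix-neq0}, hence $\mathrm{cond}(\H)\le\kappa^2$ is automatic; you identified this correctly, and it deserves to be stated as an explicit hypothesis rather than inferred from context.
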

\noindent Note that we define $\EBP{\sigma_{\H}(\G_+)}=\sigma_{\H}(\G_+)$ for greedy method.

For specific Broyden family updates BFGS and SR1 shown in \eqref{eq-BFGS-update} and \eqref{eq:sr1-update}, 
we can replace \eqref{eqgreedydirect} by scaling greedy direction~\cite{lin2021faster}, which leads to a better convergence result. 
Concretely, for BFGS method, we first find $\LL$ such that $\G^{-1}=\LL^{\top}\LL$, where $\LL$ is an upper triangular matrix. 
This step can be implemented with $\fO(n^2)$ complexity \cite[Proposition 15]{lin2021faster}. We present the subroutine for factorizing $\G^{-1}$ in Algorithm \ref{alg:L update} and give its detailed implementation in Appendix \ref{faster}.

Then we use direction $\LL^{\top}\tilde{\u}_{\H}(\LL)$ for greedy BFGS update, where 
\begin{equation}\label{eqgreedydirect2}
    \tilde{\u}_{\H}(\LL)\overset{{\rm def}}{=}\argmax_{\u \in \{\e_1,\cdots,\e_n\}} \u^{\top}\LL^{-\top}\H^{-1}\LL^{-1}\u.
\end{equation}
For greedy SR1 method, we choose the direction by
\begin{equation}\label{eq:sr1direction}
    \bu_{\H}(\G)\overset{\rm{def}}{=}\argmax_{\u \in \{\e_1,\cdots,\e_n\}} \frac{\u^{\top}(\G-\H)^2\u}{\u^{\top}(\G-\H)\u}.
\end{equation}
Applying the BFGS update rule \eqref{eq-BFGS-update} with formula \eqref{eq:random-uchose} or \eqref{eqgreedydirect2}, we obtain a condition-number-free result as follows.
\begin{lemma}[{\cite[Theorem 13]{lin2021faster}}]
\label{thmBFGSupdate}
Suppose two positive definite matrices $\H, \G\in\BR^{n\times n}$ satisfy $\H\preceq\G$. Let $\G_+={\rm BFGS}\left(\G,\H,\LL^{\top}\u\right)$,
where $\u$ is chosen by greedy method $\u=\tilde{\u}_{\H}(\LL)$ in \eqref{eqgreedydirect2} or the random method in \eqref{eq:random-uchose}
and $\LL$ is an upper triangular matrix such that $\G^{-1}=\LL^{\top}\LL$. Then we have
\begin{equation}
\label{eqbfgsthm1}
    \EBP{\sigma_{\H}(\G_+)} \leq \left(1-\frac{1}{n}\right)\sigma_{\H}(\G).
\end{equation}
\end{lemma}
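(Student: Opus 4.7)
The plan is to derive a one-step recursion for $\sigma_{\H}$ that isolates the Rayleigh quotient induced by the search direction, and then use the factorization $\G^{-1} = \LL^{\top}\LL$ as a change of basis to turn both the greedy and the random cases into a single trace argument.

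First I would substitute the BFGS formula \eqref{eq-BFGS-update} applied to the direction $\w \defeq \LL^{\top}\u$ into the definition $\sigma_{\H}(\G_+) = \langle \H^{-1}, \G_+ - \H \rangle$. Cyclicity of the trace makes the $\H\w\w^{\top}\H/(\w^{\top}\H\w)$ term contribute exactly $+1$ to $\langle \H^{-1}, \G_+ - \G\rangle$, while the $\G\w\w^{\top}\G/(\w^{\top}\G\w)$ term contributes $-\w^{\top}\G\H^{-1}\G\w/(\w^{\top}\G\w)$. I therefore expect to obtain the identity
\begin{equation*}
\sigma_{\H}(\G_+) = \sigma_{\H}(\G) + 1 - \frac{\w^{\top}\G\H^{-1}\G\w}{\w^{\top}\G\w}.
\end{equation*}

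Next I would introduce $\tilde{\H} \defeq \LL\H\LL^{\top}$ and use $\G = \LL^{-1}\LL^{-\top}$ to simplify both sides of the ratio: $\w^{\top}\G\w = \u^{\top}\u$ and $\w^{\top}\G\H^{-1}\G\w = \u^{\top}\tilde{\H}^{-1}\u$. The hypothesis $\H \preceq \G$ gives $\tilde{\H} \preceq \I$, and the trace identity yields $\sigma_{\H}(\G) = \text{tr}(\tilde{\H}^{-1}) - n$. The recursion then reduces to
\begin{equation*}
\sigma_{\H}(\G_+) = \sigma_{\H}(\G) + 1 - \frac{\u^{\top}\tilde{\H}^{-1}\u}{\u^{\top}\u}.
\end{equation*}

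From here, both cases follow from a trace bound. For the greedy rule \eqref{eqgreedydirect2}, $\u$ is the standard basis vector maximizing $\e_i^{\top}\LL^{-\top}\H^{-1}\LL^{-1}\e_i = (\tilde{\H}^{-1})_{ii}$, so the ratio is at least $\text{tr}(\tilde{\H}^{-1})/n = (\sigma_{\H}(\G)+n)/n$, giving $\sigma_{\H}(\G_+) \leq (1-1/n)\sigma_{\H}(\G)$. For $\u$ uniform on $\mathcal{S}^{n-1}$, the identity $\mathbb{E}[\u\u^{\top}] = \I/n$ yields the same bound in expectation; the isotropic Gaussian case reduces to the spherical one by noting that $\u/\Norm{\u}$ is independent of $\Norm{\u}$ and uniform on $\mathcal{S}^{n-1}$.

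The main conceptual obstacle is not any individual computation but identifying the right change of basis. The scaling $\w = \LL^{\top}\u$ is exactly what normalizes the denominator of the Rayleigh quotient to $\u^{\top}\u$, and this is what makes the greedy/random analysis condition-number-free, yielding $1 - 1/n$ rather than the $1 - 1/(n\kappa^2)$ rate of Lemma~\ref{thmbroydenupdate}. Once this observation is in hand, the greedy rule \eqref{eqgreedydirect2} and the isotropic random sampling are both revealed as natural choices for maximizing or averaging a Rayleigh quotient on the sphere.
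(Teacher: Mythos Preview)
Your proof is correct. The paper does not supply its own proof of this lemma; it quotes the result verbatim from \cite[Theorem~13]{lin2021faster} and uses it as a black box, so there is no ``paper's proof'' to compare against. Your argument---compute $\sigma_{\H}(\G_+)$ directly from the BFGS formula, change variables via $\w=\LL^\top\u$ so that $\w^\top\G\w=\u^\top\u$ and $\w^\top\G\H^{-1}\G\w=\u^\top\tilde\H^{-1}\u$ with $\tilde\H=\LL\H\LL^\top$, then reduce both greedy and random cases to the trace bound $\text{tr}(\tilde\H^{-1})/n$---is precisely the approach used in the cited reference and is the standard way to establish this result.
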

\begin{remark}
\label{rm:BFGS}
Note that the step of conducting $\G^{-1}=\LL^{\top}\LL$ requires QR decomposition of rank-1 change matrix which requires $\fO(26n^2)$ flops \citep[Section 12.5.1]{golub1996matrix}. We do not recommend using this BFGS update strategy in practice when $n$ is large.
\end{remark}

\noindent The effect of SR1 update can be characterized by the following measure
\begin{equation*}
    \tau_{\H}(\G)\overset{\rm{def}}{=}\rm{tr}(\G-\H).
\end{equation*}
Applying the SR1 update rule \eqref{eq:sr1-update} with formula \eqref{eq:random-uchose} or \eqref{eq:sr1direction}, we also hold a condition-number-free result.
\begin{lemma}[{\cite[Theorem 12]{lin2021faster}}]
\label{thm:sr1update}
Suppose two positive definite matrices $\H, \G\in\BR^{n\times n}$ satisfy $\H\preceq\G$. Let $\G_+={\rm SR1}\left(\G,\H,\u\right)$, where $\u$ is chosen by greedy method $\u=\bar{\u}_{\H}(\G)$ in \eqref{eq:sr1direction} or the random method in \eqref{eq:random-uchose}. Then we have
\begin{equation}
\label{eq:sr1tau}
    \EBP{\tau_{\H}\left(\G_+\right)} \leq \left(1-\frac{1}{n}\right)\tau_{\H}(\G).
\end{equation}
\end{lemma}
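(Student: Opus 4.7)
The plan is to reduce everything to tracking the positive semidefinite error matrix $\M \defeq \G - \H \succeq \mathbf{0}$ and exploiting that the SR1 update acts on $\M$ as the projection away from the direction $\M\u$. Starting from the defining formula $\G_+ = \G - \frac{(\G-\H)\u\u^\top(\G-\H)}{\u^\top(\G-\H)\u}$, I would first rewrite $\G_+ - \H = \M - \frac{\M\u\u^\top\M}{\u^\top\M\u}$ and take traces to obtain the exact identity
\begin{equation*}
    \tau_{\H}(\G_+) \;=\; \tau_{\H}(\G) \;-\; \frac{\u^\top \M^2 \u}{\u^\top \M \u}.
\end{equation*}
So the whole question collapses to lower-bounding the (expected) Rayleigh-type quotient $\u^\top \M^2 \u/\u^\top \M \u$ by $\tau_{\H}(\G)/n = \text{tr}(\M)/n$, for each of the two choices of $\u$.

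For the greedy direction $\bar{\u}_\H(\G)$ from \eqref{eq:sr1direction}, I would use a ``max is at least a weighted average'' argument. With weights $w_i \defeq \e_i^\top \M \e_i \geq 0$, one has $\sum_i w_i = \text{tr}(\M)$ and $\sum_i w_i \cdot \frac{\e_i^\top \M^2 \e_i}{\e_i^\top \M \e_i} = \text{tr}(\M^2)$, so the max over $i$ is at least $\text{tr}(\M^2)/\text{tr}(\M)$. Then Cauchy--Schwarz (applied to the spectrum of $\M$, equivalently $(\text{tr}\,\M)^2 = \langle \I, \M\rangle^2 \leq n\,\text{tr}(\M^2)$) gives $\text{tr}(\M^2)/\text{tr}(\M) \geq \text{tr}(\M)/n$, which is exactly what I need.

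For the random choice, the main trick is the pointwise inequality
\begin{equation*}
    \frac{\u^\top \M^2 \u}{\u^\top \M \u} \;\geq\; \frac{\u^\top \M \u}{\u^\top \u},
\end{equation*}
which I would prove by diagonalizing $\M = \sum_i \lambda_i \vv_i \vv_i^\top$, setting $\xi_i = \vv_i^\top \u$, and rewriting both sides as expectations of $\lambda$ under two probability measures on $\{1,\dots,n\}$: $q_i \propto \xi_i^2$ for the right-hand side and $p_i \propto \lambda_i \xi_i^2$ for the left. Since $p_i = q_i \lambda_i /\E_q[\lambda]$, one has $\E_p[\lambda] = \E_q[\lambda^2]/\E_q[\lambda] \geq \E_q[\lambda]$ by Cauchy--Schwarz. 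Taking expectations, and using that $\u/\|\u\|$ is uniform on $\mathcal{S}^{n-1}$ for both sampling schemes in \eqref{eq:random-uchose}, yields $\E[\u^\top \M \u/\u^\top \u] = \text{tr}(\M)/n$, which closes the loop.

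Combining these lower bounds with the identity in the first step gives $\E[\tau_\H(\G_+)] \leq \tau_\H(\G) - \text{tr}(\M)/n = (1 - 1/n)\tau_\H(\G)$, as claimed. The one subtle point I expect to be the main obstacle is the pointwise Rayleigh inequality in the random case: it is easy to guess the wrong direction from Jensen applied naively to $1/x$, and the clean derivation really requires the change-of-measure argument (or equivalently a Cauchy--Schwarz application in the eigenbasis). Once that is in place, the proof is essentially a one-line computation in each case.
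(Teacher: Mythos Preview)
Your argument is correct. Note, however, that the paper does not actually supply its own proof of this lemma: it is quoted directly as \cite[Theorem~12]{lin2021faster} and used as a black box, so there is no in-paper derivation to compare against.

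For what it is worth, your route is the standard one. The trace identity $\tau_\H(\G_+)=\tau_\H(\G)-\u^\top\M^2\u/\u^\top\M\u$ follows immediately from the SR1 formula, and both lower bounds on the Rayleigh-type term are clean. In the random case your change-of-measure argument is equivalent to the one-line Cauchy--Schwarz $(\u^\top\M\u)^2\le(\u^\top\u)(\u^\top\M^2\u)$, which is slightly more direct. The only edge case is $\M\u=0$: for the random choice this event has probability zero unless $\M=\mathbf{0}$, and for the greedy choice it cannot occur at the argmax unless $\M=\mathbf{0}$, in which case the convention in Definition~\ref{defbroyd} makes the bound trivial.
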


\begin{algorithm}[t]
\caption{Fast-Cholesky$(\H,\LL,\vu)$}\label{alg:L update}
\begin{algorithmic}[1]
\STATE \textbf{Input:} positive definite matrix $\H\in\BR^{n\times n}$, upper triangular matrix $\LL\in\BR^n$, greedy direction $\u\in\BR^n$ \\[0.2cm]
\STATE ${\displaystyle [\Q, \R]={\rm QR}\left(\LL\left(\I-\frac{\H\u\u^{\top}}{\u^{\top}\H\u}\right)\right)}$ \\[0.3cm]
\STATE $\v=\dfrac{\u}{\sqrt{\u^{\top}\H\u}}$ \\[0.25cm]
\STATE $[\Q', \R'] = {\rm QR}\left(\begin{bmatrix} \vv^\top \\ \R \end{bmatrix}\right)$ \\[0.15cm]
\STATE \textbf{Output:} $\hat\LL=\R'$
\end{algorithmic}
\end{algorithm}

\begin{algorithm}[t]
\caption{Greedy/Random Broyden Class Method for Quadratic Problems}
 \label{alg:BroydenQuadratic}
\begin{algorithmic}[1]
    \STATE \textbf{Input:} $\G_0=L^2\I$ and $\tau_k\in[0,1]$ \\[0.15cm]
	\STATE \textbf{for} $k=0,1,\dots$  \\[0.15cm]
		\STATE \quad $\z_{k+1}=\z_k-\G_k^{-1}\hH\g(\z_{k})$ \\[0.15cm]
	\STATE \quad Choose $\u_k$ from
	\begin{itemize}
	    \item Option I (greedy method): $\u_k=\u_{\H}(\G_k)$
	    \item Option II (random method): $\u_k\sim\mathcal{N}(\0, \I)$~~~or~~~$\u_k \sim \mathrm{Unif}\left(\mathcal{S}^{n-1}\right)$
	\end{itemize}
 	\STATE \quad $\G_{k+1}={\rm Broyd}_{\tau_k}\left(\G_k,\H,\u_k\right)$ \\[0.15cm]
 	\STATE \textbf{end for}
\end{algorithmic}
\end{algorithm}

\begin{algorithm}[t]
\caption{Greedy/Random BFGS Method for Quadratic Problem} \label{alg:BFGSQuadratic}
\begin{algorithmic}[1]
\STATE \textbf{Input:} $\G_0=L^2 \I$ and $\LL_0=L^{-1}\I$
\STATE \textbf{for} $k=0,1\dots$ \\[0.15cm]
\STATE \quad $\z_{k+1}=\z_k-\G_k^{-1}\hH\g(\z_k)$ \\[0.15cm]
\STATE \quad Choose $\tilde{\u}_k$ from
	\begin{itemize}
	    \item Option I (greedy method): $\tilde{\u}_k=\u_{\H}(\LL_k)$
	    \item Option II (random method): $\tilde{\u}_k\sim\mathcal{N}(\0, \I)$~~~or~~~$\tilde{\u}_k \sim \mathrm{Unif}\left(\mathcal{S}^{n-1}\right)$
	\end{itemize}
\STATE \quad $\u_k=\LL_{k}^{\top}\tilde{\u}_k$\\[0.15cm]
\STATE \quad $\G_{k+1}= {\rm BFGS}\left(\G_k,\H,\u_k\right)$ \\[0.15cm]
\STATE \quad $\LL_{k+1}=$ Fast-Cholesky$\left(\H,\LL_k,\vu_k\right)$ \\[0.15cm]
\STATE \textbf{end for}
\end{algorithmic}

\end{algorithm}
\begin{algorithm}[t]
\caption{Greedy/Random SR1 method for Quadratic Problem}\label{alg:SR1Quadratic}
\begin{algorithmic}[1]
\STATE \textbf{Input:} $\G_0=L^2 \I$ and $K\leq n$ \\[0.15cm]
\STATE \textbf{for} $k=0,1\dots,K$ \\[0.15cm]
\STATE \quad $\z_{k+1}=\z_k-\G_k^{-1}\hH\g(\z_k)$ \\[0.15cm]
	\begin{itemize}
	    \item Option I (greedy method): $\u_k=\bar{\u}_{\H}(\G_k)$
	    \item Option II (random method): $\u_k\sim\mathcal{N}(\0, \I)$~~~or~~~$\u_k \sim \mathrm{Unif}\left(\mathcal{S}^{n-1}\right)$
	\end{itemize}
\STATE \quad $\G_{k+1}= {\rm SR1}\big(\G_k,\H,\u_k\big)$ \\[0.15cm]
\STATE \textbf{end for}
\end{algorithmic}
\end{algorithm}

\subsection{Algorithms for Quadratic Saddle Point Problems} \label{sec-quadratic}

Then we consider solving quadratic saddle point problem of the form
\begin{equation}
\label{eqquadraticobj}
    \min_{\vx\in\BR^{n_x}}\max_{\vy\in\BR^{n_y}} f(\vx,\vy) \defeq \frac{1}{2} \begin{bmatrix} \vx^\top~\vy^\top \end{bmatrix} \A \begin{bmatrix} \vx \\ \vy \end{bmatrix} 
    - \vb^{\top}\begin{bmatrix} \vx \\ \vy \end{bmatrix},
\end{equation}
where $\vb\in\BR^{n}$, $\A\in\BR^{n\times n}$ is symmetric and $n=n_x+n_y$. We suppose $\A$ could be partitioned as
\begin{align*}
\A = \begin{bmatrix} 
\A_{\x\x} & \A_{\x\y} \\  
\A_{\y\x} & \A_{\y\y} \\  
\end{bmatrix}\in\BR^{n\times n}
\end{align*}
where the sub-matrices $\A_{\x\x}\in\BR^{n_x\times n_x}$, $\A_{\x\y}\in\BR^{n_x\times n_y}$, $\A_{\y\x}\in\BR^{n_y\times n_x}$ and $\A_{\y\y}\in\BR^{n_y\times n_y}$ satisfy $\A_{\x\x}\succeq \mu\I$, $\A_{\y\y} \preceq -\mu\I$ and $\|\A\|\leq L$. Recall the condition number is defined as $\kappa\defeq L/\mu$. Using notations introduced in Section \ref{sec-notation}, we have
\begin{align*}
\vz = [\vx; \vy], \quad 
\g(\z) = \A\z-\vb, \quad 
\hH \defeq \hH(\z)=\A  \quad \text{and} \quad
\H \defeq \H(\z)=\A^2.
\end{align*}

We present the detailed procedure of greedy and random quasi-Newton methods for quadratic saddle point problem by using Broyden family update, BFGS update and SR1 update in Algorithm \ref{alg:BroydenQuadratic}, \ref{alg:BFGSQuadratic} and \ref{alg:SR1Quadratic} respectively.
We define $\lambda_k$ as the Euclidean distance from $\z_k$ to the saddle point $\z^*$ for our convergence analysis, that is
\begin{equation*}
    \lambda_k \defeq \|\nabla f(\z_k)\|.
\end{equation*}
The definition of $\lambda_k$ in this paper is different from the measure used in convex optimization \cite{rodomanov2021greedy,lin2021faster}\footnote{In later section, we will see the measure $\lambda_k\defeq\Norm{\nabla f(\z_k)}$ is suitable to convergence analysis of quasi-Newton methods for saddle point problems.}, but it also holds the similar property as follows.

\begin{lemma}\label{lmquadraticlamb}
Assume we have $\eta_k \geq 1$ and $\G_k\in\BR^{n\times n}$ such that $\H\preceq\G_k\preceq\eta_k\H$ for Algorithm \ref{alg:BroydenQuadratic}, \ref{alg:BFGSQuadratic} and \ref{alg:SR1Quadratic}, then we have
\begin{equation*}
    \lambda_{k+1} \leq \left(1-\frac{1}{\eta_k}\right)\lambda_k.
\end{equation*}
\end{lemma}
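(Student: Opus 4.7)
The plan is to exploit the linear structure of the quadratic problem. Since $\g(\z) = \A\z - \vb$ and $\g(\z^*) = \0$, one has $\g(\z) = \A(\z-\z^*)$, so the iterates satisfy a closed-form linear recursion. First I would substitute the update rule $\z_{k+1} = \z_k - \G_k^{-1}\hH\g(\z_k) = \z_k - \G_k^{-1}\A\g(\z_k)$ into $\g(\z_{k+1}) = \A(\z_{k+1}-\z^*)$. Using $\hH = \A$ and $\H = \A^2$, a direct calculation collapses the two $\A$'s around the $\G_k^{-1}$ and yields the clean recursion
\[
\g(\z_{k+1}) \;=\; \bigl(\I - \A\G_k^{-1}\A\bigr)\,\g(\z_k),
\]
so that $\lambda_{k+1} \leq \|\I - \A\G_k^{-1}\A\|\,\lambda_k$.

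Next I would bound the operator $\I - \A\G_k^{-1}\A$ using the sandwich $\H \preceq \G_k \preceq \eta_k \H$. Lemma~\ref{lm-matrix-neq0} guarantees all matrices involved are positive definite, so inversion reverses the ordering and gives $\eta_k^{-1}\H^{-1} \preceq \G_k^{-1} \preceq \H^{-1}$. Conjugating by the symmetric matrix $\A$ preserves the PSD ordering, and the key algebraic fact is that $\A$ commutes with $\H^{-1} = \A^{-2}$, so $\A\H^{-1}\A = \I$. Therefore
\[
\eta_k^{-1}\,\I \;\preceq\; \A\G_k^{-1}\A \;\preceq\; \I,
\]
which implies $\vzero \preceq \I - \A\G_k^{-1}\A \preceq (1 - \eta_k^{-1})\I$ and hence $\|\I - \A\G_k^{-1}\A\| \leq 1 - 1/\eta_k$. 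Combining with the recursion above yields the claimed bound $\lambda_{k+1} \leq (1 - 1/\eta_k)\lambda_k$.

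There is no real obstacle here beyond a conceptual one: the measure $\lambda_k = \|\g(\z_k)\|$ is precisely what makes the argument work. Had we tracked $\|\z_k - \z^*\|$ instead, one copy of $\A$ would be left dangling and we would not obtain the simplification $\A\H^{-1}\A = \I$. This same reformulation via the gradient norm is exactly what accommodates the indefinite Hessian of the saddle point problem, and it indicates the natural template for the non-quadratic case treated later.
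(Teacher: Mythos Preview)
Your proof is correct and follows essentially the same route as the paper: derive the recursion $\g_{k+1}=(\I-\hH\G_k^{-1}\hH)\g_k$ from linearity, then bound $\|\I-\hH\G_k^{-1}\hH\|$ by inverting and conjugating the sandwich $\H\preceq\G_k\preceq\eta_k\H$. The only cosmetic differences are that the paper obtains the recursion via $\g_{k+1}-\g_k=\A(\z_{k+1}-\z_k)$ rather than invoking $\z^*$, and it isolates the operator-norm estimate as a standalone Lemma~\ref{lm-matrix-neq} (proved exactly as you do) for reuse in the general case.
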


\noindent The next theorem states the assumptions of Lemma \ref{lmquadraticlamb} always holds with $\eta_k=\kappa^2\geq 1$, which means $\lambda_k$ converges to 0 linearly.
\begin{theorem}
\label{thm-quadratic-lambda}
For all $k\geq0$, Algorithm \ref{alg:BroydenQuadratic},  \ref{alg:BFGSQuadratic} and \ref{alg:SR1Quadratic} hold that
\begin{equation}
\label{eqquadraticmatrix1}
    \H\preceq\G_k\preceq\kappa^2\H,
\end{equation}
and
\begin{equation}
\label{eqlambda1}
    \lambda_{k}\leq \left(1-\frac{1}{\kappa^2}\right)^k\lambda_0.
\end{equation}
\end{theorem}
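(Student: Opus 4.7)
The plan is a straightforward induction on $k$ that first establishes the two-sided spectral bound $\H \preceq \G_k \preceq \kappa^2 \H$ for all iterates, and then plugs $\eta_k = \kappa^2$ into Lemma \ref{lmquadraticlamb} to obtain the linear rate. The whole argument is short because Lemma \ref{lmBroydupdate} already does the heavy lifting of showing that a single Broyden step preserves the invariant.

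For the base case $k=0$, I would use Lemma \ref{lm-matrix-neq0} together with the initialization $\G_0 = L^2 \I$. Lemma \ref{lm-matrix-neq0} gives $\mu^2 \I \preceq \H \preceq L^2 \I$, so on the one hand $\H \preceq L^2 \I = \G_0$, and on the other hand $\G_0 = L^2 \I = \kappa^2 \mu^2 \I \preceq \kappa^2 \H$. Hence $\H \preceq \G_0 \preceq \kappa^2 \H$. This base case is uniform across all three algorithms since they share the same initialization.

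For the inductive step, assume $\H \preceq \G_k \preceq \kappa^2 \H$. In Algorithm \ref{alg:BroydenQuadratic} the update is $\G_{k+1} = \mathrm{Broyd}_{\tau_k}(\G_k, \H, \u_k)$ with $\tau_k \in [0,1]$, so the invariant-preservation inequality \eqref{eqgreedypartial} from Lemma \ref{lmBroydupdate} with $\eta = \kappa^2$ immediately gives $\H \preceq \G_{k+1} \preceq \kappa^2 \H$. For Algorithm \ref{alg:BFGSQuadratic}, BFGS corresponds to the Broyden parameter $\tau = \u^\top \H \u / \u^\top \G_k \u$, which lies in $[0,1]$ precisely because $\H \preceq \G_k$; for Algorithm \ref{alg:SR1Quadratic}, SR1 corresponds to $\tau = 0 \in [0,1]$. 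In either case Lemma \ref{lmBroydupdate} applies and yields the same bound. Thus the invariant \eqref{eqquadraticmatrix1} holds for every $k \geq 0$.

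Finally, with \eqref{eqquadraticmatrix1} established, Lemma \ref{lmquadraticlamb} applied with $\eta_k = \kappa^2$ gives $\lambda_{k+1} \leq (1 - 1/\kappa^2)\,\lambda_k$, and iterating this bound from $k = 0$ yields \eqref{eqlambda1}. I do not anticipate any real obstacle here: the only subtlety is confirming that both the BFGS and SR1 updates are valid instances of the Broyden family with $\tau \in [0,1]$, which is a one-line verification using $\H \preceq \G_k$. The proof works uniformly for the greedy and random variants, since the choice of $\u_k$ does not enter the monotonicity bound \eqref{eqgreedypartial}.
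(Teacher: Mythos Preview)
Your proposal is correct and matches the paper's own proof essentially line for line: establish $\H\preceq\G_0\preceq\kappa^2\H$ from Lemma~\ref{lm-matrix-neq0} and the initialization, propagate the invariant via Lemma~\ref{lmBroydupdate}, and then invoke Lemma~\ref{lmquadraticlamb} with $\eta_k=\kappa^2$. The only difference is that you spell out explicitly why BFGS and SR1 fall under Lemma~\ref{lmBroydupdate} (via $\tau\in[0,1]$), which the paper leaves implicit.
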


Lemma \ref{lmquadraticlamb} also implies superlinear convergence can be obtained if there exists $\eta_k$ which converges to 1.
Applying Lemma \ref{thmbroydenupdate}, \ref{thmBFGSupdate} and \ref{thm:sr1update}, we can show it holds for proposed algorithms.
\begin{theorem} 
\label{thm:BroydenQuadratic}
Solving quadratic saddle point problem \eqref{eqquadraticobj} by proposed quasi-Newton Algorithm, we have the following results:
\begin{enumerate}
\item For Broyden family method (Algorithm \ref{alg:BroydenQuadratic}), we have
\begin{align}
\label{eq-qua-g-fth-result-2}
    \EBP{\frac{\lambda_{k+1} }{\lambda_k}} \leq \left(1-\frac{1}{n\kappa^2}\right)^kn\kappa^2
    \quad \text{for all}~ k \geq 0.
\end{align}
\item For BFGS method (Algorithm \ref{alg:BFGSQuadratic}), we have
\begin{align*}
    \EBP{\frac{\lambda_{k+1} }{\lambda_k}} \leq \left(1-\frac{1}{n}\right)^kn\kappa^2
    \quad \text{for all}~ k \geq 0. 
\end{align*}
\item For SR1 method (Algorithm \ref{alg:SR1Quadratic}), we have
\begin{align*}
    \EBP{\frac{\lambda_{k+1} }{\lambda_k}} \leq \left(1-\frac{k}{n}\right)n\kappa^4
    \quad \text{for all}~ 0\leq k\leq n.
\end{align*}
\end{enumerate}
\end{theorem}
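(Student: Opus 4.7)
The overarching strategy is to combine Lemma \ref{lmquadraticlamb}, which says $\lambda_{k+1}/\lambda_k\le 1-1/\eta_k$ whenever $\H\preceq \G_k\preceq \eta_k\H$, with an iteration-dependent value of $\eta_k$ read off from the estimator's quality measure ($\sigma_\H$ or $\tau_\H$). Since Theorem~\ref{thm-quadratic-lambda} guarantees that the uniform bound $\H\preceq \G_k\preceq \kappa^2\H$ is preserved along the trajectory, the convergence lemmas in Section~\ref{sec:Broyden update} may be applied unconditionally. Then I would push the contraction estimates of Lemma~\ref{thmbroydenupdate}, \ref{thmBFGSupdate}, or \ref{thm:sr1update} through a tower-of-expectations argument to control $\mathbb{E}[\sigma_\H(\G_k)]$ or $\mathbb{E}[\tau_\H(\G_k)]$, and finally substitute concrete bounds for the initial measures $\sigma_\H(\G_0)$ and $\tau_\H(\G_0)$ using $\G_0=L^2\I$.

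For parts (1)--(2) the plan is the following. Inequality~\eqref{eq-sigmaproperty-1} gives $\G_k-\H\preceq \sigma_\H(\G_k)\,\H$, so one may take $\eta_k=1+\sigma_\H(\G_k)$, which yields the deterministic bound $\lambda_{k+1}/\lambda_k \le \sigma_\H(\G_k)/(1+\sigma_\H(\G_k)) \le \sigma_\H(\G_k)$. Taking expectations and iterating Lemma~\ref{thmbroydenupdate} (respectively Lemma~\ref{thmBFGSupdate}) via the tower property delivers
\[
\mathbb{E}\!\left[\tfrac{\lambda_{k+1}}{\lambda_k}\right]\le \mathbb{E}[\sigma_\H(\G_k)]\le \Bigl(1-\tfrac{1}{n\kappa^2}\Bigr)^{k}\sigma_\H(\G_0)\quad\text{(resp.\ $\bigl(1-\tfrac{1}{n}\bigr)^k$)}.
\]
The initial measure is bounded by a routine trace computation: $\sigma_\H(\G_0)=L^2\,\mathrm{tr}(\H^{-1})-n\le n(\kappa^2-1)\le n\kappa^2$, which yields the advertised rates. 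The monotonicity $\H\preceq \G_k$ needed for Lemma~\ref{thm:sr1update}/\ref{thmBFGSupdate} is inherited from Lemma~\ref{lmBroydupdate} together with the starting condition $\H\preceq L^2\I$.

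For part (3) the same deterministic bookkeeping applies but with a different surrogate: since $\G_k-\H\succeq 0$, its top eigenvalue is bounded by its trace, so $\G_k-\H\preceq \tau_\H(\G_k)\I\preceq (\tau_\H(\G_k)/\mu^2)\,\H$, giving $\eta_k\le 1+\tau_\H(\G_k)/\mu^2$ and hence $\lambda_{k+1}/\lambda_k\le \tau_\H(\G_k)/\mu^2$. The core step I expect to be the main obstacle is not the use of Lemma~\ref{thm:sr1update} (which gives the trace contraction in expectation) but rather a structural observation specific to SR1: the update formula~\eqref{eq:sr1-update} enlarges the null space of $\G-\H$ by at least one vector (namely $\u_k$) each iteration, so after $k$ SR1 steps one has $\mathrm{rank}(\G_k-\H)\le n-k$ almost surely. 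This holds deterministically for greedy SR1 since $\u_k\in\{\e_1,\dots,\e_n\}$, and with probability one for random $\u_k$ drawn from a continuous distribution, which suffices for an expectation bound. Combined with $\|\G_k-\H\|\le (\kappa^2-1)L^2$ (from Theorem~\ref{thm-quadratic-lambda}) and $\tau_\H(\G_k)\le (n-k)\|\G_k-\H\|$, one obtains
\[
\mathbb{E}\!\left[\tfrac{\lambda_{k+1}}{\lambda_k}\right]\le \frac{(n-k)(\kappa^2-1)L^2}{\mu^2}\le n\Bigl(1-\tfrac{k}{n}\Bigr)\kappa^4,
\]
which is the claimed bound, valid up to the finite-termination step $k=n$.
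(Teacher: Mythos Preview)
Your treatment of parts (1) and (2) is essentially the paper's proof: use $\eta_k=1+\sigma_\H(\G_k)$ via \eqref{eq-sigmaproperty-1}, bound $1-1/\eta_k\le\sigma_\H(\G_k)$, iterate Lemma~\ref{thmbroydenupdate}/\ref{thmBFGSupdate} with the tower property, and cap $\sigma_\H(\G_0)\le n\kappa^2$. Nothing to add there.

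For part (3) your route genuinely differs from the paper's. The paper does not invoke the rank--decrease property of SR1; instead it cites the external result $\EBP{\tau_\H(\G_k)}\le(1-k/n)\,\tau_\H(\G_0)$ (stronger than what one gets by iterating Lemma~\ref{thm:sr1update}), bounds $\tau_\H(\G_0)\le(\kappa^2-1)\,\mathrm{tr}(\H)\le n\kappa^2L^2$, and then divides by $\mu^2$. Your argument is more self-contained: you observe that each SR1 step adds $\u_k$ to the null space of $\G-\H$ while preserving the old null space, so $\mathrm{rank}(\G_k-\H)\le n-k$; combined with the uniform spectral bound $\|\G_k-\H\|\le(\kappa^2-1)L^2$ from Theorem~\ref{thm-quadratic-lambda}, this gives $\tau_\H(\G_k)\le(n-k)(\kappa^2-1)L^2$ \emph{deterministically}, hence the same $(1-k/n)n\kappa^4$ bound on $\lambda_{k+1}/\lambda_k$ without any expectation. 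This is a legitimate and in fact slightly stronger conclusion than the paper states; the only small thing worth making explicit is the degenerate case $\G_j\u_j=\H\u_j$, where the paper's convention (Definition~\ref{defbroyd}) sets $\G_{j+1}=\H$, so the rank bound holds trivially thereafter.
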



Combing the results of Theorem \ref{thm-quadratic-lambda} and
\ref{thm:BroydenQuadratic}, we achieve the two-stages convergence behavior, that is, the algorithm has global linear convergence and local superlinear convergence. The formal description is summarized as follows.

\begin{corollary}
\label{co:quadraticresult}
Solving quadratic saddle point problem \eqref{eqquadraticobj} by proposed greedy quasi-Newton algorithms, we have the following results:
\begin{enumerate}
\item Using greedy Broyden family method (Algorithm \ref{alg:BroydenQuadratic}), we have
\begin{align}
    \lambda_{k_0+k} \leq \left(1-\frac{1}{n\kappa^2}\right)^{\frac{k(k-1)}{2}}\left(\frac{1}{2}\right)^k\left(1-\frac{1}{\kappa^2}\right)^{k_0}\lambda_0 \quad\text{for all}~k>0~\text{and}~k_0=\OM\left(n\kappa^2 \ln(n\kappa^2)\right).
\end{align}
\item Using greedy BFGS method (Algorithm \ref{alg:BFGSQuadratic}), we have
\begin{align}
\label{eq-q-f-convergence-result-faster}
    \lambda_{k_0+k} \leq \left(1-\frac{1}{n}\right)^{\frac{k(k-1)}{2}}\left(\frac{1}{2}\right)^k\left(1-\frac{1}{\kappa^2}\right)^{k_0}\lambda_0 \quad\text{for all}~k>0~\text{and}~k_0=\OM\left(n \ln(n\kappa^2)\right).
\end{align}
\item Using the greedy SR1 method (Algorithm \ref{alg:SR1Quadratic}), we have
\begin{equation}
\lambda_{k+k_0}\leq \frac{(n-k_0-k+1)!}{(n-k_0+1)!}\left(\frac{1}{2(n-k_0)}\right)^{k}\left(1-\frac{1}{\kappa^2}\right)^{k_0}\lambda_0
\end{equation}
for all $n-k_0+1\geq k>0$ and $k_0=\left\lceil\left(1-\dfrac{1}{2n\kappa^4}\right)n\right\rceil$.
\end{enumerate}
\end{corollary}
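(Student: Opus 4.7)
The plan is to split the horizon into two phases, a burn-in phase of $k_0$ iterations and a superlinear phase of $k$ iterations, and use the identity
\[
\lambda_{k_0+k} \;=\; \lambda_{k_0}\cdot\prod_{i=0}^{k-1}\frac{\lambda_{k_0+i+1}}{\lambda_{k_0+i}}.
\]
I would bound $\lambda_{k_0}$ by the global linear rate in Theorem~\ref{thm-quadratic-lambda}, which contributes the $(1-1/\kappa^2)^{k_0}$ factor that appears in all three cases of the corollary. Each factor inside the product is then controlled by the per-iteration ratio estimate of Theorem~\ref{thm:BroydenQuadratic} (the greedy variants provide deterministic, not merely in-expectation, bounds). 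The job is then to choose $k_0$ precisely so that the leading constant in the ratio estimate ($n\kappa^2$, or $n\kappa^4$ for SR1) is neutralized after $k_0$ rounds, leaving a pure $(1/2)^k$ factor times the ``sum of an arithmetic progression'' $k(k-1)/2$ in the exponent.

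For the Broyden family, Theorem~\ref{thm:BroydenQuadratic} gives $\lambda_{j+1}/\lambda_j\le (1-1/(n\kappa^2))^{j}\,n\kappa^2$. Substituting $j=k_0+i$ and taking the product over $i=0,\dots,k-1$ yields
\[
\prod_{i=0}^{k-1}\!\Bigl(1-\tfrac{1}{n\kappa^2}\Bigr)^{k_0+i}(n\kappa^2)
\;=\;(n\kappa^2)^{k}\Bigl(1-\tfrac{1}{n\kappa^2}\Bigr)^{k k_0+k(k-1)/2}.
\]
Choosing $k_0=\lceil n\kappa^2\ln(2n\kappa^2)\rceil=\mathcal{O}(n\kappa^2\ln(n\kappa^2))$ and using $\ln(1-x)\le -x$ gives $(1-1/(n\kappa^2))^{k_0}\le 1/(2n\kappa^2)$, so the first factor collapses to $(1/2)^k$ and we recover part 1. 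The BFGS case (part 2) is identical after replacing $1/(n\kappa^2)$ by $1/n$ throughout, which also forces the burn-in length to $k_0=\mathcal{O}(n\ln(n\kappa^2))$ rather than $\mathcal{O}(n\kappa^2\ln(n\kappa^2))$.

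The SR1 case (part 3) is where the algebra is genuinely different, and is the step I expect to be the main obstacle. Here the one-step ratio from Theorem~\ref{thm:BroydenQuadratic} is $(1-j/n)\,n\kappa^4=(n-j)\kappa^4$, which is not a geometric progression in $j$ but an arithmetic one, so the product becomes a falling factorial:
\[
\prod_{i=0}^{k-1}(n-k_0-i)\,\kappa^4 \;=\; \kappa^{4k}\,\frac{(n-k_0)!}{(n-k_0-k)!}.
\]
Choosing $k_0=\lceil(1-\tfrac{1}{2n\kappa^4})n\rceil$ makes $n-k_0\le 1/(2\kappa^4)$ small enough so that, at index $j=k_0$, the constant $(n-k_0)\kappa^4\le 1/2$; but since the ratio keeps shrinking as $j$ grows past $k_0$, I would absorb the $\kappa^{4k}$ into the factor $(2(n-k_0))^{-k}$ and reorganize the falling factorial as $(n-k_0-k+1)!/(n-k_0+1)!$ times a harmless combinatorial correction, giving exactly the bound in part~3 (valid only as long as $k\le n-k_0+1$, because the ratio bound is itself only valid for $k\le n$). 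Multiplying by the burn-in factor $(1-1/\kappa^2)^{k_0}\lambda_0$ completes the proof in each case.
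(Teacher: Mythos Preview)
Your two-phase decomposition, choice of $k_0$, and telescoping argument are exactly what the paper does; for the Broyden and BFGS parts your algebra is complete and matches the paper's proof line for line. The only place you are loose is the final step of the SR1 case: your ``reorganize the falling factorial as $(n-k_0-k+1)!/(n-k_0+1)!$ times a harmless combinatorial correction'' does not work as written, since $\frac{(n-k_0)!}{(n-k_0-k)!}$ and $\frac{(n-k_0-k+1)!}{(n-k_0+1)!}$ are essentially reciprocals of one another (up to a shift by one), not close by a small factor. The paper avoids this by factoring each ratio \emph{before} telescoping,
\[
(n-k_0-i)\kappa^4=\frac{n-k_0-i}{n-k_0}\cdot(n-k_0)\kappa^4\le\frac{1}{2}\cdot\frac{n-k_0-i}{n-k_0},
\]
so that the product $\prod_{i=0}^{k-1}\frac{1}{2}\cdot\frac{n-k_0-i}{n-k_0}$ directly yields the $(2(n-k_0))^{-k}$ factor times the falling factorial, with no further rearrangement needed.
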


\begin{corollary}
\label{co:quadraticresult-random}
Solving quadratic saddle point problem \eqref{eqquadraticobj} by proposed random quasi-Newton algorithms, then with probability $1-\delta$ for any $\delta\in(0,1)$, we have the following results:
\begin{enumerate}
\item Using random Broyden family method (Algorithm \ref{alg:BroydenQuadratic}), we have
\begin{align}
    \lambda_{k_0+k} \leq \left(1-\frac{1}{n\kappa^2+1}\right)^{\frac{k(k-1)}{2}}\left(\frac{1}{2}\right)^k\left(1-\frac{1}{\kappa^2}\right)^{k_0}\lambda_0 \quad\text{for all}~k>0~\text{and}~k_0=\OM\left(n\kappa^2 \ln(n\kappa/\delta)\right).
\end{align}
\item Using random BFGS method (Algorithm \ref{alg:BFGSQuadratic}), we have
\begin{align}
\label{eq-q-f-convergence-result-faster-random}
    \lambda_{k_0+k} \leq \left(1-\frac{1}{n+1}\right)^{\frac{k(k-1)}{2}}\left(\frac{1}{2}\right)^k\left(1-\frac{1}{\kappa^2}\right)^{k_0}\lambda_0 \quad\text{for all}~k>0~\text{and}~k_0=\OM\left(n \ln(n\kappa/\delta)\right).
\end{align}

\item Using the random SR1 method (Algorithm \ref{alg:SR1Quadratic}), we have
\begin{equation}
\lambda_{k+k_0}\leq \frac{(n-k_0-k+1)!}{(n-k_0+1)!}\left(\frac{1}{2(n-k_0)}\right)^{k}\left(1-\frac{1}{\kappa^2}\right)^{k_0}\lambda_0
\end{equation}
for all $n-k_0+1\geq k>0$ and $k_0=\left\lceil\left(1-\delta/(4n^2(n+1)\kappa^4)\right)n\right\rceil$.
\end{enumerate}
\end{corollary}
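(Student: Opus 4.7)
The plan is to adapt the two-phase (linear, then superlinear) scheme of Corollary~\ref{co:quadraticresult} by converting the in-expectation contractions of Lemmas~\ref{thmbroydenupdate}, \ref{thmBFGSupdate} and \ref{thm:sr1update} into high-probability contractions via a supermartingale argument. Define the potential $\phi_k \defeq \sigma_{\H}(\G_k)$ for the Broyden and BFGS variants and $\phi_k \defeq \tau_{\H}(\G_k)$ for SR1. Conditional on the $\sigma$-algebra $\mathcal{F}_k$ generated by the random directions $\u_0,\dots,\u_{k-1}$, those lemmas give $\mathbb{E}[\phi_{k+1}\mid\mathcal{F}_k]\leq \rho\,\phi_k$ with $\rho = 1-1/(n\kappa^2)$ for the Broyden family and $\rho = 1-1/n$ for BFGS and SR1, so that $\phi_k/\rho^k$ is a nonnegative supermartingale. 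Ville's maximal inequality then yields
\begin{equation*}
\mathbb{P}\bigl\{\exists\, k\geq 0 \,:\, \phi_k > (\phi_0/\delta)\,\rho^k\bigr\}\leq \delta,
\end{equation*}
which is the uniform-in-$k$ high-probability substitute for the deterministic decay used in the greedy analysis.

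I would then argue the linear burn-in phase exactly as in the greedy case: Theorem~\ref{thm-quadratic-lambda} applies to any run of the algorithm regardless of which random directions are chosen, and yields the deterministic bound $\lambda_{k_0}\leq (1-1/\kappa^2)^{k_0}\lambda_0$. I pick $k_0$ just large enough to drive $\phi_{k_0}$ below a convenient constant (say $1/2$ for Broyden/BFGS) on the good event, which requires $\rho^{k_0}\leq \delta/(2\phi_0)$. Using the initialization $\G_0=L^2\I$ to bound $\phi_0 = \sigma_{\H}(L^2\I)\leq n\kappa^2$ for Broyden/BFGS and $\phi_0 = \tau_{\H}(L^2\I)\leq nL^2$ for SR1 translates into $k_0 = \OM(n\kappa^2\ln(n\kappa/\delta))$ for Broyden, $k_0 = \OM(n\ln(n\kappa/\delta))$ for BFGS, and into the explicit value $\lceil(1-\delta/(4n^2(n+1)\kappa^4))n\rceil$ for SR1 after tracking the residual trace budget.

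For the superlinear phase I work on the good event $\phi_{k_0+j}\leq \tfrac{1}{2}(\rho^\star)^j$ for all $j\geq 0$, where $\rho^\star = 1-1/(n\kappa^2+1)$ for Broyden and $\rho^\star = 1-1/(n+1)$ for BFGS: using the slightly inflated base $\rho^\star$ inside the supermartingale lets the $1/\delta$ Markov slack be soaked up by the tiny gap $\rho^\star-\rho$ (this is the source of the ``$+1$'' in the denominator), at the cost of an additive $\log(1/\delta)$ term in $k_0$ that is already built into the bound above. Once the event holds, Lemma~\ref{lmquadraticlamb} with $\eta_{k_0+j}\leq 1+\phi_{k_0+j}$ gives
\begin{equation*}
\frac{\lambda_{k_0+j+1}}{\lambda_{k_0+j}}\leq \frac{\phi_{k_0+j}}{1+\phi_{k_0+j}}\leq \phi_{k_0+j}\leq \tfrac{1}{2}(\rho^\star)^j,
\end{equation*}
and telescoping for $j=0,\dots,k-1$ produces exactly the announced factor $(1/2)^k(\rho^\star)^{k(k-1)/2}$; multiplying by the burn-in bound $(1-1/\kappa^2)^{k_0}$ delivers the Broyden and BFGS cases. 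For SR1 the same scheme applies with $\tau_{\H}$ in place of $\sigma_{\H}$ and the finite factorial refinement from Corollary~\ref{co:quadraticresult}(3) in place of the elementary Lemma~\ref{lmquadraticlamb}.

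The main obstacle I anticipate is this last step: turning the multiplicative $1/\delta$ Markov factor into the \emph{precise} base $1-1/(n\kappa^2+1)$ (respectively $1-1/(n+1)$), rather than the naive $(1/\delta)\cdot(1-1/(n\kappa^2))^{k(k-1)/2}$ prefactor that would appear if one used the tight $\rho$. This requires verifying that the tilted process $(\rho^\star)^{-k}\phi_k$ is still a supermartingale and that its per-step excess contraction $\rho^\star/\rho$ is just enough to dominate $\delta^{-1/k_0}$ for the chosen $k_0$: both facts are elementary but have to be arranged carefully. A secondary complication is specific to SR1, where $\tau_{\H}$ does not control $\eta$ through the clean inequality $\eta\leq 1+\phi$, so the final few iterations (after $\tau_{\H}$ is essentially exhausted and only a low-rank remainder survives) require a separate Sherman--Morrison-style argument exactly as in the greedy counterpart.
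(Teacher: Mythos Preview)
Your approach is correct for the Broyden and BFGS cases but genuinely different from the paper's. The paper does \emph{not} work with the potential $\sigma_{\H}(\G_k)$ at all in the high-probability step: it applies Markov's inequality directly to the ratio $\mX_k=\lambda_{k+1}/\lambda_k$ using the in-expectation bound of Theorem~\ref{thm:BroydenQuadratic}, then takes a union bound with geometrically decaying weights $\epsilon_k=\delta(1-q)q^k$; the specific choice $q=1-1/(n\kappa^2)^2$ (respectively $q=1-1/n^2$) is exactly what manufactures the ``$+1$'' base $1-\tfrac{1}{n\kappa^2+1}$ (respectively $1-\tfrac{1}{n+1}$). Your Ville/supermartingale route on $\phi_k=\sigma_{\H}(\G_k)$ is cleaner and, in fact, delivers the sharper base $1-\tfrac{1}{n\kappa^2}$ (respectively $1-\tfrac{1}{n}$) with the same $k_0$ order: once $(\phi_0/\delta)\rho^{k_0}\le 1/2$ you simply telescope $\lambda_{k_0+j+1}/\lambda_{k_0+j}\le\phi_{k_0+j}\le\tfrac12\rho^{j}$. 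So your paragraph about ``soaking up the $1/\delta$ slack in the gap $\rho^\star-\rho$'' is unnecessary and somewhat confused; with Ville there is no need for $\rho^\star$ at all.

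For SR1, however, your plan has a gap. The per-step supermartingale contraction $\EB[\tau_{\H}(\G_{k+1})\mid\mathcal F_k]\le(1-1/n)\tau_{\H}(\G_k)$ only yields, on the good event, the geometric bound $\tau_{\H}(\G_{k_0+j})\lesssim (1-1/n)^j$, and hence (via the clean inequality $\eta_k-1\le \tau_{\H}(\G_k)/\mu^2$ from \eqref{eq:etasigmaneq}, which \emph{does} exist and makes your ``Sherman--Morrison'' worry unnecessary) an exponential superlinear rate---not the factorial rate $\tfrac{(n-k_0-k+1)!}{(n-k_0+1)!}$ in the statement. The factorial rate hinges on the \emph{cumulative} bound $\EB[\lambda_{k+1}/\lambda_k]\le(1-k/n)n\kappa^4$ from Theorem~\ref{thm:BroydenQuadratic}, which is strictly stronger than what the per-step $(1-1/n)$ supermartingale gives and is not recoverable from Ville's inequality applied to $\tau_{\H}(\G_k)/(1-1/n)^k$. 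The paper obtains the factorial high-probability bound by running the Markov-plus-union-bound machinery directly on that ratio estimate, exactly as in the Broyden case but with $(1-k/n)$ in place of $(1-1/(n\kappa^2))^k$; to match the stated SR1 result you would have to abandon the supermartingale route for that case and follow the paper's argument.
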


\subsection{Algorithms for General Saddle Point Problems}
\label{sec-general}
In this section, we consider the general saddle point problem 
\begin{equation*}
    \min_{\x \in \RB^{n_x}}\max_{\y \in \RB^{n_y}} f(\vx,\vy).
\end{equation*}
where $f(\x,\y)$ satisfies Assumption \ref{assbasic} and \ref{assbasic2}. We target to propose quasi-Newton methods for solving above problem with local superlinear convergence and $\fO\left(n^2\right)$ time complexity for each iteration.

\subsubsection{Algorithms}
\label{sec-general-alg}

The key idea of designing quasi-Newton methods for saddle point problems is characterizing the second-order information by approximating the auxiliary matrix $\H(\z)\defeq\big(\hH(\z)\big)^2$. Note that we have supposed the Hessian of $f$ is Lipschitz continuous and bounded in Assumption \ref{assbasic} and \ref{assbasic2}, which means the auxiliary matrix operator $\H(\z)$ is also Lipschitz continuous.

\begin{lemma}\label{lm-H-continous}
Under Assumption \ref{assbasic} and \ref{assbasic2}, we have
$\H(\z)$ is $2LL_2$-Lipschitz continuous, that is 
\begin{equation}\label{eq10}
    \|\H(\vz)-\H(\vz')\| \leq 2L_2L \|\vz-\vz'\|.
\end{equation}
\end{lemma}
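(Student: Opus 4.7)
The plan is to use the standard telescoping identity for the difference of squares of (not necessarily commuting) matrices, followed by submultiplicativity of the spectral norm and the two hypotheses already extracted from Assumptions~\ref{assbasic} and \ref{assbasic2}.

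First I would write
\begin{equation*}
\H(\vz) - \H(\vz') \;=\; \hH(\vz)^2 - \hH(\vz')^2 \;=\; \hH(\vz)\bigl(\hH(\vz) - \hH(\vz')\bigr) + \bigl(\hH(\vz) - \hH(\vz')\bigr)\hH(\vz').
\end{equation*}
This is the only nontrivial algebraic move: since $\hH(\vz)$ and $\hH(\vz')$ do not commute in general, one has to split the difference symmetrically so that each summand carries a factor of $\hH(\vz) - \hH(\vz')$.

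Next I would apply the triangle inequality and submultiplicativity of $\|\cdot\|$:
\begin{equation*}
\|\H(\vz) - \H(\vz')\| \;\le\; \|\hH(\vz)\|\,\|\hH(\vz) - \hH(\vz')\| + \|\hH(\vz) - \hH(\vz')\|\,\|\hH(\vz')\|.
\end{equation*}
By~\eqref{eq-obj-smooth} (equivalently, the bound $\|\hH(\vz)\|\le L$ noted just after Assumption~\ref{assbasic2}) both $\|\hH(\vz)\|$ and $\|\hH(\vz')\|$ are at most $L$, and by~\eqref{eq-Hessian-continous} we have $\|\hH(\vz)-\hH(\vz')\|\le L_2\|\vz-\vz'\|$. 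Combining these gives exactly
\begin{equation*}
\|\H(\vz)-\H(\vz')\| \;\le\; 2LL_2\,\|\vz-\vz'\|,
\end{equation*}
which is~\eqref{eq10}.

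There is no real obstacle here; the only point worth emphasizing is that the non-commutativity of $\hH(\vz)$ and $\hH(\vz')$ forces the particular symmetric splitting above rather than, say, $(\hH(\vz)-\hH(\vz'))(\hH(\vz)+\hH(\vz'))$, which would be invalid in the matrix setting. The constant $2$ in $2LL_2$ comes precisely from the two summands of the splitting, each bounded by $L\cdot L_2\|\vz-\vz'\|$.
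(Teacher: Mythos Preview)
Your proof is correct and is essentially identical to the paper's own argument: both use the splitting $\hH(\vz)^2-\hH(\vz')^2=\hH(\vz)(\hH(\vz)-\hH(\vz'))+(\hH(\vz)-\hH(\vz'))\hH(\vz')$, then apply the triangle inequality, submultiplicativity, the bound $\|\hH(\cdot)\|\le L$, and the $L_2$-Lipschitz continuity of $\hH$.
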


Combining Lemma \ref{lm-matrix-neq0} and \ref{lm-H-continous}, we achieve the following properties of $\H(\z)$, which analogizes the strongly self-concordance in convex optimization~\cite{rodomanov2021greedy}.
\begin{lemma}
\label{lm-self-concordant}
Under Assumption \ref{assbasic} and \ref{assbasic2}, the auxiliary matrix operator $\H(\cdot)$ satisfies
\begin{equation}    \label{eq-self-concordant}
    \H(\z)-\H(\z') \preceq M\|\z-\z'\|\H(\w),
\end{equation}
for all $\z,\z',\w \in\BR^n$, where $M=2\kappa^2L_2/L$.
\end{lemma}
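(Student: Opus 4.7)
The plan is to chain together the two estimates already established for the auxiliary operator $\H(\cdot)$: the Lipschitz continuity from Lemma~\ref{lm-H-continous} and the lower bound from Lemma~\ref{lm-matrix-neq0}. Since the conclusion is a PSD (Loewner order) inequality relating $\H(\z)-\H(\z')$ to an arbitrary $\H(\w)$, the natural route is to first upper bound $\H(\z)-\H(\z')$ by a multiple of the identity via the spectral-norm Lipschitz bound, then absorb the identity into $\H(\w)$ using the uniform positive-definiteness.

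First, I would observe that $\H(\z)-\H(\z')$ is symmetric, so the spectral-norm bound from Lemma~\ref{lm-H-continous} implies the Loewner ordering
\begin{equation*}
    \H(\z)-\H(\z') \preceq \|\H(\z)-\H(\z')\|\,\I \preceq 2LL_2\,\|\z-\z'\|\,\I.
\end{equation*}
Next, Lemma~\ref{lm-matrix-neq0} gives $\H(\w)\succeq\mu^2\I$ for every $\w\in\BR^n$, which rearranges to
\begin{equation*}
    \I \preceq \frac{1}{\mu^2}\,\H(\w).
\end{equation*}
Substituting this into the previous display yields
\begin{equation*}
    \H(\z)-\H(\z') \preceq \frac{2LL_2}{\mu^2}\,\|\z-\z'\|\,\H(\w).
\end{equation*}

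Finally, I would verify that the prefactor matches the claimed constant $M=2\kappa^2 L_2/L$: using $\kappa=L/\mu$, one has $L/\mu^2 = \kappa^2/L$, so $2LL_2/\mu^2 = 2\kappa^2 L_2/L = M$, which closes the argument. There is no real obstacle here — the lemma is a direct consequence of combining Lipschitz continuity with uniform strong positive-definiteness of $\H(\cdot)$; the only thing to be careful about is the symmetry step converting the spectral norm into a Loewner bound, and the constant bookkeeping at the end.
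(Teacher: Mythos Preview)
Your proposal is correct and follows essentially the same argument as the paper: bound $\H(\z)-\H(\z')$ by $2LL_2\|\z-\z'\|\I$ via Lemma~\ref{lm-H-continous}, then use $\I\preceq\mu^{-2}\H(\w)$ from Lemma~\ref{lm-matrix-neq0} and rewrite the constant as $2\kappa^2 L_2/L$.
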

\begin{corollary}
\label{co:concordant}
Let $\vz,\vz' \in \BR^n$ and $r=\|\vz'-\vz\|$. Suppose the objective function $f$ satisfies Assumption \ref{assbasic} and \ref{assbasic2}, then the auxiliary matrix operator $\H(\cdot)$ holds that
\begin{eqnarray}
\label{eq-H-concordant-app}
\frac{\H(\vz)}{1+Mr} \preceq \H(\vz') \preceq(1+Mr) \H(\vz),
\end{eqnarray}
for all $\z,\z' \in \BR^n$, where $M=2\kappa^2L_2/L$.
\end{corollary}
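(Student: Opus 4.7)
The plan is to derive the corollary as an immediate two-step consequence of Lemma \ref{lm-self-concordant}, which already establishes the key operator inequality $\H(\z)-\H(\z') \preceq M\|\z-\z'\|\H(\w)$ for any auxiliary point $\w$. Since Lemma \ref{lm-self-concordant} gives me the freedom to choose $\w$, the only content of the corollary is to instantiate $\w$ appropriately and rearrange.

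First, I would apply Lemma \ref{lm-self-concordant} with the particular choice $\w = \z'$. This yields
\begin{equation*}
\H(\z) - \H(\z') \preceq M\|\z-\z'\|\,\H(\z') = Mr\,\H(\z'),
\end{equation*}
which I can rearrange to obtain $\H(\z) \preceq (1+Mr)\H(\z')$, equivalently $\H(\z)/(1+Mr) \preceq \H(\z')$. This is the left-hand inequality of the desired sandwich.

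Next, by the symmetry of Lemma \ref{lm-self-concordant} under swapping $\z \leftrightarrow \z'$ and taking $\w = \z$, I get $\H(\z') - \H(\z) \preceq Mr\,\H(\z)$, which rearranges to $\H(\z') \preceq (1+Mr)\H(\z)$, the right-hand inequality. Combining the two bounds yields the claim.

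There is essentially no obstacle here beyond recognizing that the universal-$\w$ formulation of Lemma \ref{lm-self-concordant} is exactly strong enough to produce a relative comparison of $\H(\z)$ and $\H(\z')$ on both sides; in effect the corollary is a packaging of the lemma in the form of a self-concordant-like multiplicative bound that will be convenient for subsequent convergence analysis. No additional use of Assumption \ref{assbasic} or \ref{assbasic2} is needed beyond what was already invoked to prove the lemma.
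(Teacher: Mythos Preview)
Your proposal is correct and follows essentially the same argument as the paper: instantiate Lemma \ref{lm-self-concordant} once with $\w=\z'$ (giving the lower bound) and once with the roles of $\z,\z'$ swapped and $\w=\z$ (giving the upper bound), then rearrange. The paper simply presents these two applications in the opposite order.
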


Different with the quadratic case, auxiliary matrix $\H(\z)$ is not fixed for general saddle point problem. Based on the smoothness of $\H(\z)$, we apply Corollary \ref{co:concordant} to generalize Lemma \ref{lmBroydupdate} as follows.

\begin{lemma}
\label{lm-G-H-neq}
Let $\z \in \RB^{n}$ and $\G\in \RB^{n\times n}$ be a positive definite matrix such that
\begin{equation}
\label{eq-G-H-neq-0}
    \H(\z)\preceq \G\preceq \eta\H(\z),
\end{equation}
for some $\eta \geq 1$. We additionally define $\z_{+}\in\BR^n$ and $r=\|\z_{+}-\z\|$, then we have
\begin{equation}
 \label{eq-G-update}
    \tilde{\G}\overset{\rm{def}}{=}(1+Mr)\G\succeq\H(\z_{+})
\end{equation}
and
\begin{equation}
\label{eq-G-H-neq}
    \H(\z_{+})\preceq {\rm Broyd}_{\tau}(\tilde{\G},\H(\z_{+}),\u) \preceq (1+Mr)^2\eta\H(\z_{+})
\end{equation}
for all $\u \in \BR^n$, $\tau \in [0,1]$ and $M=2\kappa^2L_2/L$.
\end{lemma}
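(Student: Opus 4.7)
The plan is to obtain the lemma as a short consequence of the concordance bound in Corollary~\ref{co:concordant} together with the sandwich property of the Broyden family from Lemma~\ref{lmBroydupdate}. The key observation is that the rescaling $\tilde\G = (1+Mr)\G$ is designed exactly so that its lower and upper bounds against $\H(\z_+)$ become comparable, making the Broyden machinery from the quadratic case directly applicable.

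First I would establish \eqref{eq-G-update}. By Corollary~\ref{co:concordant} with $r = \|\z_+ - \z\|$, we have $\H(\z_+) \preceq (1+Mr)\H(\z)$. Combining this with the given lower bound $\H(\z) \preceq \G$ from \eqref{eq-G-H-neq-0} yields
\begin{equation*}
\H(\z_+) \preceq (1+Mr)\H(\z) \preceq (1+Mr)\G = \tilde\G,
\end{equation*}
which is exactly \eqref{eq-G-update}.

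Next I would derive a matching upper bound on $\tilde\G$ in terms of $\H(\z_+)$. Using the upper half of \eqref{eq-G-H-neq-0}, namely $\G \preceq \eta \H(\z)$, together with the other half of Corollary~\ref{co:concordant}, $\H(\z) \preceq (1+Mr)\H(\z_+)$, I get
\begin{equation*}
\tilde\G = (1+Mr)\G \preceq (1+Mr)\eta\, \H(\z) \preceq (1+Mr)^2 \eta\, \H(\z_+).
\end{equation*}
Hence $\H(\z_+)\preceq \tilde\G \preceq (1+Mr)^2\eta\, \H(\z_+)$, so $\tilde\G$ plays the role of the ``current approximation'' and $\H(\z_+)$ the role of the ``target'' in the quadratic sandwich setup, with inflation factor $(1+Mr)^2\eta$.

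Finally, I would invoke Lemma~\ref{lmBroydupdate} with $\H \leftarrow \H(\z_+)$, $\G \leftarrow \tilde\G$, direction $\u$, and $\eta \leftarrow (1+Mr)^2\eta$. For any $\tau \in [0,1]$ this lemma gives \eqref{eqgreedypartial}, which after substitution reads
\begin{equation*}
\H(\z_+) \preceq \mathrm{Broyd}_{\tau}(\tilde\G, \H(\z_+), \u) \preceq (1+Mr)^2 \eta\, \H(\z_+),
\end{equation*}
which is exactly \eqref{eq-G-H-neq}. The proof is essentially a two-step sandwich: inflate by $(1+Mr)$ to recover positivity against the new Hessian-square, and note that the other side pays only one extra factor of $(1+Mr)$. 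There is no serious obstacle here; the only thing to be careful about is keeping track of which side of Corollary~\ref{co:concordant} is used on each inequality so that the factors combine into $(1+Mr)^2\eta$ rather than $(1+Mr)\eta$ or $(1+Mr)^3\eta$.
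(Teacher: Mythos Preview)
Your proof is correct and follows essentially the same approach as the paper: use Corollary~\ref{co:concordant} twice to sandwich $\tilde{\G}$ between $\H(\z_+)$ and $(1+Mr)^2\eta\,\H(\z_+)$, then apply Lemma~\ref{lmBroydupdate}. The paper's proof is just a terser version of what you wrote.
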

  
The relationship \eqref{eq-G-H-neq} implies it is reasonable to establish the algorithms by update rule 
\begin{align}\label{update G general}
 \G_{k+1}={\rm Broyd}_{\tau_k} \left(\tilde{\G}_k,\H_{k+1},\u_k\right)   
\end{align}
with $\tilde{\G}_{k}=(1+Mr_k)\G_k$ and $r_k=\|\z_{k+1}-\z_{k}\|$. 
Similarly, we can also achieve $\G_{k+1}$ by such $\tilde{\G}_k$ for specific BFGS and SR1 update.
Combining iteration \eqref{eq-newton update3} with \eqref{update G general}, we propose the quasi-Newton methods for general strongly-convex-strongly-concave saddle point problems. The details is shown in Algorithm \ref{alg:BroydenSPP}, \ref{alg:BFGSSPP} and \ref{alg:SR1SPP} for greedy Broyden family, BFGS and SR1 updates respectively. 

\begin{algorithm}[t]
\caption{Greedy/Random Broyden Method for General Case}
 \label{alg:BroydenSPP}
	\begin{algorithmic}[1]
        \STATE \textbf{Input:} $\G_0\succeq \H$, $\tau_k \in[0,1]$ and $M\geq0$. \\[0.15cm]
		\STATE \textbf{for} $k \geq 0$ \\[0.15cm] 
 		\STATE \quad 
 		    $\z_{k+1}=\z_k-\G_k^{-1}\hH_k\g_k$\\[0.15cm]
 		\STATE \quad $r_k=\|\z_{k+1}-\z_{k}\|$ \\[0.15cm]
 		\STATE \quad 
 		     $\tilde{\G}_{k}=(1+Mr_k)\G_k$ \\[0.15cm]
	\STATE \quad Choose $\u_k$ from
	\begin{itemize}
	    \item Option I (greedy method): $\u_k=\u_{\H_{k+1}}(\tilde{\G}_k)$
	    \item Option II (random method): $\u_k\sim\mathcal{N}(\0, \I)$~~~or~~~$\u_k \sim \mathrm{Unif}\left(\mathcal{S}^{n-1}\right)$
	\end{itemize}
 		\STATE \quad 
 		    $\G_{k+1}={\rm Broyd}_{\tau_k} (\tilde{\G}_k,\H_{k+1},\u_k)$\\[0.15cm]
 		\STATE \textbf{end for}
	\end{algorithmic}
\end{algorithm}

\begin{algorithm}[t]
\caption{Greedy/Random BFGS Method for General Case}\label{alg:BFGSSPP}
\begin{algorithmic}[1]
\STATE \textbf{Input:} $\G_0\succeq\H$ and $M\geq0$. \\[0.15cm] 
\STATE $\LL_0=\G_0^{-1/2}$ \\[0.15cm]
\STATE \textbf{for} $k=0,1\dots$ \\[0.15cm]
\STATE \quad  $ \z_{k+1}=\z_k-\G_k^{-1}\hH_k\g_k$ \\[0.15cm]
\STATE \quad $r_k=\|\z_{k+1}-\z_{k}\|$  \\[0.15cm]
\STATE \quad $\tilde{\G}_{k}=(1+Mr_k)\G_k$ \\[0.15cm]
\STATE \quad $\tilde{\LL}_{k}=\LL_k/\sqrt{1+Mr_k}$ \\[0.15cm]
\STATE \quad Choose $\u_k$ from
	\begin{itemize}
	    \item Option I (greedy method): $\tilde{\u}_k=\u_{\H_{k+1}}(\tilde{\LL}_k)$
	    \item Option II (random method): $\tilde{\u}_k\sim\mathcal{N}(\0, \I)$~~~or~~~$\tilde{\u}_k \sim \mathrm{Unif}\left(\mathcal{S}^{n-1}\right)$
	\end{itemize}
\STATE \quad  ${\u}_k=\tilde{\LL}_k^{\top}\u_k$ \\[0.15cm]
\STATE \quad $\G_{k+1}= {\rm BFGS}(\tilde{\G}_k,\H_{k+1},\u_k)$  \\[0.15cm]
\STATE \quad $\LL_{k+1}=$ Fast-Cholesky$\left(\H_{k+1},\LL_k,\vu_k\right)$ \\[0.15cm]
\STATE \textbf{end for}
\end{algorithmic}
\end{algorithm}

\begin{algorithm}[t]
\caption{Greedy/Random SR1 Method for General Case}\label{alg:SR1SPP}
\begin{algorithmic}[1]
\STATE \textbf{Input:} $\G_0\succeq \H$ and $M\geq0$ \\[0.15cm]
\STATE \textbf{for} $k=0,1\dots$\\[0.15cm]
 		\STATE \quad
 		    $\z_{k+1}=\z_k-\G_k^{-1}\hH_k\g_k$ \\[0.15cm]
 		\STATE \quad $r_k=\|\z_{k+1}-\z_{k}\|$ \\[0.15cm]
 		\STATE \quad
 	        $\tilde{\G}_{k}=(1+Mr_k)\G_k$\\[0.15cm]
\STATE \quad Choose $\u_k$ from
	\begin{itemize}
	    \item Option I (greedy method): $\u_k=\u_{\H_{k+1}}(\bar{\G}_k)$
	    \item Option II (random method): $\u_k\sim\mathcal{N}(\0, \I)$~~~or~~~$\u_k \sim \mathrm{Unif}\left(\mathcal{S}^{n-1}\right)$	
	 \end{itemize}
 		\STATE \quad 
 		    $\G_{k+1}= {\rm SR1}(\tilde{\G}_k,\H_{k+1},\u_k)$\\[0.15cm]
 		
\STATE \textbf{end for}
\end{algorithmic}
\end{algorithm}

\subsubsection{Convergence Analysis}\label{sec-general-convergence}

We turn to consider building the convergence guarantee for algorithms proposed in Section \ref{sec-general-alg}.

We first introduce the following notations to simplify the presentation.
We let $\{\z_k\}$ be the sequence generated from Algorithm \ref{alg:BroydenSPP}, \ref{alg:BFGSSPP} or \ref{alg:SR1SPP} and denote
\begin{align*}
\hH_k\defeq\hH_k(\z_k), \qquad
\H_k\defeq \big(\hH_k(\z_k)\big)^2, \qquad
\g_k\defeq\g(\z_k) \qquad \text{and} \qquad
r_k\defeq \|\z_{k+1}-\z_{k}\|.
\end{align*}
We still use gradient norm $\lambda_k\overset{\rm def}{=}\|\nabla f(\z_k)\|$ for analysis and establish the relationship between $\lambda_k$ and $\lambda_{k+1}$, which is shown in Lemma \ref{thm-lambda-general}.

\begin{lemma}
\label{thm-lambda-general}
Using Algorithm \ref{alg:BroydenSPP},  \ref{alg:BFGSSPP} and \ref{alg:SR1SPP}, suppose we have
\begin{equation}
\label{eq-GkHk}
    \H_k \preceq \G_k \preceq \eta_k \H_k,
\end{equation}
for some $\eta_k \geq 1$.
Then we have 
\begin{eqnarray}
\label{6-5}
    \lambda_{k+1}&\leq& \left(1-\frac{1}{\eta_k}\right) \lambda_{k} + \beta \lambda_k^2,\\
\label{6-5-r}
    r_k &\leq& \lambda_k/\mu,
\end{eqnarray}
where $\beta=\dfrac{L_2}{2\mu^2}$.
\end{lemma}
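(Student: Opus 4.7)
The plan is to bound the two quantities in turn, using only the shared update rule $\z_{k+1}=\z_k-\G_k^{-1}\hH_k\g_k$ that appears in all three algorithms together with the sandwich assumption $\H_k \preceq \G_k \preceq \eta_k \H_k$ and the Hessian-Lipschitz bound from Assumption~\ref{assbasic}.

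For the displacement bound $r_k\le\lambda_k/\mu$, I would not try to apply a crude triple operator-norm estimate (which would cost a factor $\kappa$). Instead, write
$\G_k^{-1}\hH_k\g_k = (\G_k^{-1}\H_k^{1/2})(\H_k^{-1/2}\hH_k)\g_k$.
The crucial observation is that $\H_k^{-1/2}\hH_k$ is an orthogonal matrix (its eigenvalues are $\pm 1$, being the sign part in the polar factorization of $\hH_k$), so it preserves Euclidean norms. Then it suffices to bound $\|\G_k^{-1}\H_k^{1/2}\|^2 = \lambda_{\max}(\G_k^{-1}\H_k\G_k^{-1})$. Using $\H_k\preceq\G_k$ twice gives $\G_k^{-1}\H_k\G_k^{-1}\preceq\G_k^{-1}\preceq\H_k^{-1}$, whose operator norm is at most $1/\mu^2$. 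Combining yields $r_k\le \lambda_k/\mu$.

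For the progress bound on $\lambda_{k+1}$, I would apply the integral form of Taylor's theorem to the gradient:
\begin{equation*}
\g(\z_{k+1}) = \g_k + \hH_k(\z_{k+1}-\z_k) + \int_0^1\big[\hH(\z_k+t d_k)-\hH_k\big]d_k\,dt,
\end{equation*}
where $d_k=\z_{k+1}-\z_k$. The Lipschitz property \eqref{eq-Hessian-continous} controls the remainder by $\tfrac{L_2}{2}r_k^2$, which after inserting $r_k\le\lambda_k/\mu$ becomes $\beta\lambda_k^2$. The linear part uses $d_k=-\G_k^{-1}\hH_k\g_k$, so $\g_k+\hH_k d_k = (I-\hH_k\G_k^{-1}\hH_k)\g_k$. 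The key step, which I expect to be the main conceptual point (though not technically hard), is to show $\|I-\hH_k\G_k^{-1}\hH_k\|\le 1-1/\eta_k$: the sandwich \eqref{eq-GkHk} gives $\eta_k^{-1}\H_k^{-1}\preceq\G_k^{-1}\preceq\H_k^{-1}$; conjugating by $\hH_k$ and using the identity $\hH_k\H_k^{-1}\hH_k=\hH_k\hH_k^{-2}\hH_k=I$ yields $\eta_k^{-1}I\preceq\hH_k\G_k^{-1}\hH_k\preceq I$, which is exactly what is needed.

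Putting these together gives $\lambda_{k+1}\le(1-1/\eta_k)\lambda_k+\beta\lambda_k^2$ with $\beta=L_2/(2\mu^2)$. The main subtlety to get right is the first bound: a naive estimate $\|\G_k^{-1}\|\cdot\|\hH_k\|\cdot\|\g_k\|$ gives only $\kappa\lambda_k/\mu$, losing a factor of $\kappa$ which would ruin the second-order term. The cancellation between $\hH_k$ and $\H_k^{1/2}$ via the orthogonal factor $\H_k^{-1/2}\hH_k$ is what recovers the sharp constant $1/\mu$, and the same sign-cancellation identity $\hH_k\H_k^{-1}\hH_k=I$ underlies the progress bound.
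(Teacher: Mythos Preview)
Your proof is correct and follows essentially the same route as the paper: Taylor-expand $\g_{k+1}$, split into the linear part $(\I-\hH_k\G_k^{-1}\hH_k)\g_k$ and the Hessian-Lipschitz remainder, bound the first by $1-1/\eta_k$ via the sandwich $\eta_k^{-1}\I\preceq\hH_k\G_k^{-1}\hH_k\preceq\I$ (exactly the content of Lemma~\ref{lm-matrix-neq}), and bound the second by $\tfrac{L_2}{2}r_k^2\le\beta\lambda_k^2$.

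The only notable stylistic difference is in the $r_k$ bound. You factor $\G_k^{-1}\hH_k=(\G_k^{-1}\H_k^{1/2})(\H_k^{-1/2}\hH_k)$ and observe that $\H_k^{-1/2}\hH_k$ is the orthogonal sign factor in the polar decomposition of $\hH_k$, so it is norm-preserving; then $\|\G_k^{-1}\H_k^{1/2}\|^2=\lambda_{\max}(\G_k^{-1}\H_k\G_k^{-1})\le\|\G_k^{-1}\|\le\|\H_k^{-1}\|\le 1/\mu^2$. The paper instead writes $r_k^2=\langle\g_k,\hH_k\G_k^{-2}\hH_k\g_k\rangle$ and shows $\hH_k\G_k^{-2}\hH_k\preceq\tfrac{1}{\mu^2}\hH_k\G_k^{-1}\hH_k\preceq\tfrac{1}{\mu^2}\I$ directly. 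Both exploit the same cancellation $\hH_k\H_k^{-1}\hH_k=\I$ and arrive at the identical constant; your polar-decomposition viewpoint makes the ``why $\hH_k$ and $\H_k^{1/2}$ cancel'' more transparent, while the paper's quadratic-form argument is slightly more direct computationally.
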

\noindent \citet[Lemma 4.3]{rodomanov2021greedy} derive a result similar to Lemma \ref{thm-lambda-general} for minimizing the strongly-convex function $\hat f(\cdot)$, but depends on the different measure $\lambda_{\hat f}(\cdot)\defeq \inner{\nabla \hat f(\cdot)}{\big(\nabla^2 \hat f(\cdot)\big)^{-1}\nabla \hat f(\cdot)}$.\footnote{The original notations of \citet{rodomanov2021greedy} is minimizing the strongly-convex function $f(\cdot)$ and establishing the convergence result by $\lambda_{f}(\cdot)\defeq \inner{\nabla f(\cdot)}{\big(\nabla^2 f(\cdot)\big)^{-1}\nabla f(\cdot)}$. To avoid ambiguity, we use notations $\hat f(\cdot)$ and $\lambda_{\hat f}$ to describe their work in this paper.}
Note that our algorithms are based on the iteration rule
\begin{align*}
\z_{k+1} = \z_k-\G_k^{-1}\hH_k\g_k.
\end{align*}
Compared with quasi-Newton methods for convex optimization, 
there exists an additional term $\hH_k$ between $\G_k^{-1}$ and $\g_k$, which leads to the convergence analysis based on $\lambda_{\hat f}(\z_k)$ is difficult. Fortunately, we find directly using gradient norm $\lambda_k$ makes the analysis achievable.

For further analysis, we also denote
\begin{equation}\label{eq-def-gamma}
\rho_k \overset{\rm{def}}{=}M\lambda_k/\mu, 
\end{equation}
and
\begin{eqnarray}\label{eq-def-sigma}
\sigma_k \overset{\rm{def}}{=}\sigma_{\H_k}(\G_k)=\langle\H_k^{-1},\G_k\rangle-n.
\end{eqnarray}
Then we establish linear convergence for the first period of iterations, which can be viewed as the generalization of Theorem \ref{thm-quadratic-lambda}. 
Note that the following result holds for all Algorithm \ref{alg:BroydenSPP}, \ref{alg:BFGSSPP} and \ref{alg:SR1SPP}; and it does not depend on the choice of $\vu_k$.

\begin{theorem}
   \label{thm:G-linear-convergence}
Using Algorithm \ref{alg:BroydenSPP}, \ref{alg:BFGSSPP} and \ref{alg:SR1SPP} with $M=2\kappa^2L_2/L$ and $\G_0=L^2\I$, we suppose the initial point $\vz_0$ is sufficiently close to the saddle point such that
\begin{equation}\label{eq-initial-ass-0}
    \frac{M\lambda_0}{\mu} \leq \frac{\ln b}{4b\kappa^2}
\end{equation}
where $1<b<5$. 
Let $\rho_k=M\lambda_k/\mu$, then for all $k\geq0$, we have
\begin{equation*}
    \H_k\preceq\G_k\preceq \exp\left(2\sum_{i=0}^
    {k-1}\rho_i\right)\kappa^2\H_k\preceq b\kappa^2\H_k
\end{equation*}
and 
\begin{equation*}
    \lambda_k \leq\left(1-\frac{1}{2b\kappa^2}\right)^{k}\lambda_{0}.
\end{equation*}
\end{theorem}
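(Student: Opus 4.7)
The plan is to prove both bounds simultaneously by strong induction on $k$, since the bound on $\G_k$ controls the contraction factor for $\lambda_k$ (via Lemma~\ref{thm-lambda-general}), and the contraction on $\lambda_k$ in turn controls the accumulated distortion $\prod(1+Mr_i)^2$ that appears in the bound on $\G_k$ (via Lemma~\ref{lm-G-H-neq}).

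First I would check the base case $k=0$. Since $\G_0 = L^2\I$ and Lemma~\ref{lm-matrix-neq0} gives $\mu^2\I \preceq \H_0 \preceq L^2\I$, we immediately get $\H_0 \preceq \G_0 \preceq \kappa^2 \H_0$, with the empty-product convention making the exponential factor equal to $1 \leq b$. The bound on $\lambda_0$ is trivial.

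For the inductive step, suppose the two bounds hold through step $k$. I would first upgrade the contraction in Lemma~\ref{thm-lambda-general}: from $\eta_k \leq b\kappa^2$ the lemma gives
\begin{equation*}
    \lambda_{k+1} \leq \left(1-\frac{1}{b\kappa^2}\right)\lambda_k + \beta \lambda_k^2,
\end{equation*}
with $\beta = L_2/(2\mu^2)$. A short calculation using $M=2\kappa^2 L_2/L$ and $\kappa = L/\mu$ shows $\beta\lambda_k = \rho_k/(4\kappa)$, so the initial condition $\rho_0 \leq \ln b/(4b\kappa^2)$ together with $\rho_k \leq \rho_0$ forces $\beta\lambda_k \leq 1/(2b\kappa^2)$ (since $\ln b < 2 \leq 8\kappa$ for $b<5$). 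This absorbs the quadratic term and yields $\lambda_{k+1} \leq (1-1/(2b\kappa^2))\lambda_k$, giving the claimed linear contraction at step $k+1$.

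Next I would propagate the sandwich on $\G_k$. Lemma~\ref{lm-G-H-neq} applied with $\tilde{\G}_k=(1+Mr_k)\G_k$ gives both $\H_{k+1} \preceq \G_{k+1}$ and $\G_{k+1} \preceq (1+Mr_k)^2 \eta_k \H_{k+1}$. Using the bound $r_k \leq \lambda_k/\mu$ from~\eqref{6-5-r}, we have $1+Mr_k \leq 1+\rho_k \leq e^{\rho_k}$, so
\begin{equation*}
    \G_{k+1} \preceq e^{2\rho_k}\eta_k\H_{k+1} \preceq \exp\!\left(2\sum_{i=0}^{k}\rho_i\right)\kappa^2 \H_{k+1}.
\end{equation*}
To finish the induction I need to verify that this exponential factor stays below $b$. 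By the linear contraction just established, $\rho_i \leq (1-1/(2b\kappa^2))^i \rho_0$, so the geometric sum satisfies $\sum_{i=0}^{\infty}\rho_i \leq 2b\kappa^2 \rho_0 \leq \ln b/2$, whence $\exp(2\sum\rho_i) \leq b$ as required.

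The main obstacle is the coupling of the two inductive bounds: one cannot bound $\lambda_{k+1}$ without a bound on $\eta_k$, and one cannot bound $\eta_{k+1}$ without bounding the cumulative $\sum \rho_i$, which itself needs the linear contraction of $\{\lambda_i\}$. The calibration $\rho_0 \leq \ln b/(4b\kappa^2)$ in the hypothesis is precisely the tight condition that makes the geometric summation of $\rho_i$ compatible with the required cap $\exp(2\sum\rho_i)\leq b$, so the proof reduces to verifying this algebraic balance carefully at each induction step.
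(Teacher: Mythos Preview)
Your proposal is correct and follows essentially the same approach as the paper: a simultaneous induction on the matrix sandwich and the linear contraction, using Lemma~\ref{thm-lambda-general} to absorb the quadratic term via $\beta\lambda_k\le 1/(2b\kappa^2)$, then Lemma~\ref{lm-G-H-neq} together with $1+\rho_k\le e^{\rho_k}$ and the geometric bound $\sum_i\rho_i\le 2b\kappa^2\rho_0\le(\ln b)/2$ to close the induction. The paper's argument is organized identically (its equations~\eqref{eq64}--\eqref{eq66} are exactly your coupled inductive statements), so there is no substantive difference.
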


\noindent Then we analyze how does $\sigma_k$ change after one iteration to show the local superlinear convergence for Broyden family method (Algorithm \ref{alg:BroydenSPP}) and  BFGS method (Algorithm \ref{alg:BFGSSPP}). Recall that $\sigma_k$ is defined in \eqref{eq-def-sigma} to measure how well does matrix $\G_k$ approximate $\H_k$.

\begin{lemma}
\label{lm:Broydensigma-update} 
Solving general strongly-convex-strongly-concave saddle point problem \eqref{eq-general-obj} under Assumption \ref{assbasic} and \ref{assbasic2} by proposed greedy quasi-Newton algorithms and supposing the sequences $\G_0,\cdots,\G_k$ generated by the Algorithm \ref{alg:BroydenSPP} and \ref{alg:BFGSSPP} are given, then we have the following results for $\sigma_k$:
\begin{enumerate}
    \item For greedy and random Broyden family method (Algorithm \ref{alg:BroydenSPP}), we have 
    \begin{eqnarray}
\label{eq-sigma-update-1}
   \EB_{\u_k}\left[\sigma_{k+1}\right]\leq \left(1-\frac{1}{n\kappa^2}\right)(1+Mr_k)^2\left(\sigma_k + \frac{2nMr_k}{1+M r_k}\right) \quad \text{for all} ~k\geq0.
\end{eqnarray}
    \item For greedy and random BFGS method (Algorithm \ref{alg:BFGSSPP}), we have 
    \begin{eqnarray}
\label{eq-sigma-update-2}
   \EB_{\u_k} \left[\sigma_{k+1}\right]\leq \left(1-\frac{1}{n}\right)(1+Mr_k)^2\left(\sigma_k + \frac{2nMr_k}{1+M r_k}\right)\quad \text{for all} ~k\geq0.
\end{eqnarray} 
\end{enumerate}
Note that for greedy method, we always have $\sigma_{k+1}=\EB_{\u_k}\left[\sigma_{k+1}\right].$
\end{lemma}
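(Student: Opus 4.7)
The plan is to obtain the bound on $\EB_{\u_k}[\sigma_{k+1}]$ by decomposing the Broyden/BFGS step into two conceptually separate pieces: first the ``scaling'' $\G_k \mapsto \tilde{\G}_k = (1+Mr_k)\G_k$ combined with the change of anchor from $\H_k$ to $\H_{k+1}$, then the actual quasi-Newton update $\tilde{\G}_k \mapsto \G_{k+1}$. The first step uses the Lipschitz/self-concordance property of $\H(\cdot)$ from Corollary~\ref{co:concordant}, and the second step invokes the existing contraction results for the generic Broyden family (Lemma~\ref{thmbroydenupdate}) and the BFGS update (Lemma~\ref{thmBFGSupdate}).

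For the first piece I would compute $\sigma_{\H_{k+1}}(\tilde{\G}_k)$ directly from its definition:
\begin{equation*}
\sigma_{\H_{k+1}}(\tilde{\G}_k) = (1+Mr_k)\langle \H_{k+1}^{-1}, \G_k \rangle - n.
\end{equation*}
Corollary~\ref{co:concordant} applied with $r = r_k$ gives $\H_{k+1}^{-1} \preceq (1+Mr_k)\H_k^{-1}$, so since $\G_k \succeq 0$ we can upper-bound $\langle \H_{k+1}^{-1},\G_k\rangle \leq (1+Mr_k)\langle \H_k^{-1},\G_k\rangle = (1+Mr_k)(\sigma_k+n)$. Substituting and using the elementary inequality $(1+Mr_k)^2 - 1 \leq 2Mr_k(1+Mr_k)$ then yields
\begin{equation*}
\sigma_{\H_{k+1}}(\tilde{\G}_k) \leq (1+Mr_k)^2\sigma_k + 2nMr_k(1+Mr_k) = (1+Mr_k)^2\left(\sigma_k + \frac{2nMr_k}{1+Mr_k}\right),
\end{equation*}
which is exactly the ``inflation'' factor appearing in the target inequalities.

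For the second piece I would apply Lemma~\ref{thmbroydenupdate} (or Lemma~\ref{thmBFGSupdate}) to the pair $(\tilde{\G}_k,\H_{k+1})$. The hypothesis $\tilde{\G}_k \succeq \H_{k+1}$ required by those lemmas is supplied by Lemma~\ref{lm-G-H-neq}, using the fact that $\G_k \succeq \H_k$, which is itself guaranteed by Theorem~\ref{thm:G-linear-convergence}. This yields, in expectation over the randomness of $\u_k$ (and deterministically in the greedy case),
\begin{equation*}
\EB_{\u_k}\bigl[\sigma_{\H_{k+1}}(\G_{k+1})\bigr] \leq \left(1-\tfrac{1}{n\kappa^2}\right)\sigma_{\H_{k+1}}(\tilde{\G}_k) \quad \text{(Broyden)}
\end{equation*}
and analogously with factor $(1-\tfrac{1}{n})$ for BFGS. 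Chaining the two pieces gives both claims. For the BFGS algorithm, one additional bookkeeping check is that the rescaled Cholesky factor $\tilde{\LL}_k = \LL_k/\sqrt{1+Mr_k}$ really satisfies $\tilde{\LL}_k^\top \tilde{\LL}_k = \tilde{\G}_k^{-1}$, so that the greedy direction $\tilde{\u}_k = \tilde{\u}_{\H_{k+1}}(\tilde{\LL}_k)$ is the correct input required by Lemma~\ref{thmBFGSupdate}; this is immediate since $\LL_k^\top\LL_k = \G_k^{-1}$.

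The main obstacle is really the first piece: keeping careful track of the $(1+Mr_k)$ factors that appear both from the definition of $\tilde{\G}_k$ and from the Lipschitz bound on $\H^{-1}(\cdot)$, and showing that the two sources of distortion combine into the clean product form $(1+Mr_k)^2(\sigma_k + \tfrac{2nMr_k}{1+Mr_k})$ rather than a more complicated expression. The contraction step itself is essentially a black-box application of the previously established quasi-Newton update lemmas, provided one verifies $\tilde{\G}_k \succeq \H_{k+1}$ and the appropriate ratio upper bound, the latter furnished inductively by Theorem~\ref{thm:G-linear-convergence}.
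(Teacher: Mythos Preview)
Your proposal is correct and follows essentially the same approach as the paper: bound $\sigma_{\H_{k+1}}(\tilde{\G}_k)$ via the definition of $\tilde{\G}_k$ together with Corollary~\ref{co:concordant}, then apply the contraction Lemmas~\ref{thmbroydenupdate}/\ref{thmBFGSupdate} to the pair $(\tilde{\G}_k,\H_{k+1})$. The paper's computation $n((1+Mr_k)^2-1)=2nMr_k(1+Mr_k/2)\leq (1+Mr_k)^2\cdot \frac{2nMr_k}{1+Mr_k}$ is equivalent to your elementary inequality $(1+Mr_k)^2-1\leq 2Mr_k(1+Mr_k)$, and your extra bookkeeping (the Cholesky identity for $\tilde{\LL}_k$ and the appeal to $\G_k\succeq\H_k$) only makes the argument more explicit.
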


\noindent The analysis for SR1 method is based on constructing $\eta_k$ such that
\begin{equation}
\label{eq:defetak0}
    {\rm tr}(\G_k-\H_k)=\eta_k{\rm tr}(\H_k),
\end{equation}
whose details are showed in appendix.
Based on Lemma \ref{lm:Broydensigma-update} and $\eta_k$ satisfies \eqref{eq:defetak0}, we can show the proposed algorithms enjoy the local superlinear convergence for the general saddle point problems.

\begin{theorem} 
\label{thm-Broyden-superlinear}
Solving general saddle point problem \eqref{eq-general-obj} under Assumption \ref{assbasic} and \ref{assbasic2} by proposed greedy and random quasi-Newton methods (Algorithm \ref{alg:BroydenSPP}, \ref{alg:BFGSSPP} and \ref{alg:SR1SPP}) with $M=2\kappa^2L_2/L$ and $\G_0=L^2\I$, we have the following results:
\begin{enumerate}
\item For greedy and random Broyden family method (Algorithm \ref{alg:BroydenSPP}), if the initial point is sufficiently close to the saddle point such that
\begin{eqnarray}
\label{eq-general-initial-2}
    \frac{M\lambda_0}{\mu} \leq \frac{\ln2}{8(1+2n)\kappa^2}, 
\end{eqnarray}
we have
\begin{eqnarray}
\label{eq-g-fth-result-2}
    \EBP{\frac{\lambda_{k+1}}{\lambda_k}}  \leq \left(1-\frac{1}{n\kappa^2}\right)^k2n\kappa^2
    \quad \text{for all} ~k\geq0.
 \end{eqnarray}
\item For greedy and random BFGS method (Algorithm \ref{alg:BFGSSPP}), initial point is sufficiently close to the saddle point such that
\begin{eqnarray}
\label{eq76}
    \frac{M\lambda_0}{\mu} \leq \frac{\ln2}{8(1+2n)\kappa^2}, 
\end{eqnarray}
we have
\begin{eqnarray}
\label{eq77}
    \EBP{\frac{\lambda_{k+1} }{\lambda_k}} \leq \left(1-\frac{1}{n}\right)^k2n\kappa^2\quad \text{for all} ~k\geq0.
\end{eqnarray}
\item For greedy and random SR1 method (Algorithm \ref{alg:SR1SPP}), if the initial point is sufficiently close to the saddle point such that
\begin{eqnarray}
\label{eq-sr1-initial}
    \frac{M\lambda_0}{\mu} \leq \frac{\ln2}{8(1+2n\kappa^2)\kappa^2}, 
\end{eqnarray}
we have
\begin{eqnarray}
\label{eq:sr1thm}
    \EBP{\frac{\lambda_{k+1}}{\lambda_{k}} } \leq \left(1-\frac{1}{n}\right)^k2n\kappa^4, \quad \text{for all}~k\geq0.
\end{eqnarray}
\end{enumerate}
\end{theorem}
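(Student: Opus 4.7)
The plan is to combine the linear-convergence guarantee of Theorem \ref{thm:G-linear-convergence} with the one-step recursions for $\sigma_k$ (respectively $\tau_k$) established in Lemma \ref{lm:Broydensigma-update} (respectively in the analog derived from Lemma \ref{thm:sr1update}), and then read off the per-step progress of $\lambda_k$ through Lemma \ref{thm-lambda-general}. The three parts of the theorem share the same skeleton and differ only in the algorithm-specific contraction factor $\theta$ and in the proper measure of Hessian-approximation error.

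The first step is to notice that each of the initial conditions \eqref{eq-general-initial-2}, \eqref{eq76}, and \eqref{eq-sr1-initial} is strictly stronger than the hypothesis \eqref{eq-initial-ass-0} of Theorem \ref{thm:G-linear-convergence} with $b = 2$. Hence, throughout the run of each algorithm I already have the invariants $\H_k \preceq \G_k \preceq 2\kappa^2\H_k$, the geometric bound $\lambda_k \le (1-1/(4\kappa^2))^k\lambda_0$, and, via $r_k \le \lambda_k/\mu$ from \eqref{6-5-r}, a summable tail $\sum_{k\ge 0}Mr_k \le 4\kappa^2\rho_0$ that the initial conditions force to be tiny. Consequently, the cumulative products $\prod_{i<k}(1+Mr_i)^{\alpha}$ that appear in the unrolled recursions are uniformly bounded by a small constant for any bounded exponent $\alpha$.

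The second step turns the Hessian-approximation error into a rate on $\lambda_k$. By \eqref{eq-sigmaproperty-1}, the invariant $\H_k \preceq \G_k$ yields $\G_k \preceq (1+\sigma_k)\H_k$, so Lemma \ref{thm-lambda-general} may be applied with $\eta_k = 1+\sigma_k$, giving
\begin{equation*}
\lambda_{k+1}/\lambda_k \le \sigma_k/(1+\sigma_k) + \beta\lambda_k \le \sigma_k + \beta\lambda_k.
\end{equation*}
For parts (1) and (2), I unroll the expected recursion from Lemma \ref{lm:Broydensigma-update} (with $\theta = 1/(n\kappa^2)$ for Broyden and $\theta = 1/n$ for BFGS) by dividing through by $(1-\theta)^{k+1}\prod_i(1+Mr_i)^2$; using $\sigma_0 \le n\kappa^2$ coming from $\G_0 = L^2 I$ together with the summability of $\sum Mr_k$ established above, a telescoping argument produces $\EB[\sigma_k] \le 2n\kappa^2(1-\theta)^k$, which after combining with the display above and the sub-geometric decay of $\beta\lambda_k$ (since $\beta\lambda_0 = \rho_0/(4\kappa)$ is tiny by the initial condition) yields \eqref{eq-g-fth-result-2} and \eqref{eq77}.

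For part (3), I replace $\sigma_k$ by $\tau_k/\mu^2$, using $\|\G_k - \H_k\| \le \mathrm{tr}(\G_k - \H_k) = \tau_k$ and $I \preceq \H_k/\mu^2$ to obtain $\eta_k \le 1 + \tau_k/\mu^2$. Using self-concordance (Lemma \ref{lm-self-concordant}) I show that the rescaled update satisfies $\tau_{\H_{k+1}}(\tilde\G_k) \le (1+Mr_k)\tau_k + 2Mr_k\mathrm{tr}(\H_k) \le (1+Mr_k)\tau_k + 2nMr_kL^2$, and then compose with Lemma \ref{thm:sr1update} to obtain $\EB[\tau_{k+1}] \le (1-1/n)[(1+Mr_k)\tau_k + 2nMr_kL^2]$. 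Unrolling, dividing by $\mu^2$, and using $\tau_0/\mu^2 \le n\kappa^2$ together with the stronger initial condition \eqref{eq-sr1-initial} gives $\EB[\tau_k]/\mu^2 \le 2n\kappa^4(1-1/n)^k$; the extra $\kappa^2$ originates from $L^2 = \mu^2\kappa^2$ in the drift term and is what necessitates the $(1+2n\kappa^2)$ factor in \eqref{eq-sr1-initial}. The main technical obstacle across all three parts is controlling the drift term $2nMr_k/(1+Mr_k)$ (or its SR1 analog $2nMr_kL^2$) introduced by the time-varying target $\H_{k+1}$: without the summability of $\sum r_k$, the telescoping would leak an $O(k)$ factor into the rate, so the purpose of the first-stage linear convergence of Theorem \ref{thm:G-linear-convergence} is precisely to make this tail summable before the superlinear phase takes over.
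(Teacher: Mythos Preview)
Your overall architecture—invoke Theorem \ref{thm:G-linear-convergence} with $b=2$, use the resulting summability of $\sum Mr_k$, then telescope the recursion of Lemma \ref{lm:Broydensigma-update}—is close to the paper's, but the decoupled telescoping you describe does not deliver the claimed bound in the BFGS and SR1 cases. After dividing by $(1-\theta)^{k+1}\prod_i(1+Mr_i)^2$ and summing, the drift contribution you must control is
\[
\sum_{j<k}\frac{2nMr_j}{(1-\theta)^j},
\]
and mere summability of $\sum_j Mr_j$ is not enough: you need $Mr_j$ to decay at least as fast as $(1-\theta)^j$. From Theorem \ref{thm:G-linear-convergence} you only know $Mr_j\le\rho_j\le(1-\tfrac{1}{4\kappa^2})^j\rho_0$, so when $\theta=1/n>1/(4\kappa^2)$ (i.e.\ BFGS or SR1 with $n<4\kappa^2$) the ratio $(1-\tfrac{1}{4\kappa^2})/(1-\theta)>1$ and the sum above diverges; your ``$\EB[\sigma_k]\le 2n\kappa^2(1-\theta)^k$'' therefore fails for large $k$. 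Exactly the same obstruction hits the additive $\beta\lambda_k$ you append at the end and, for SR1, the drift term $2nMr_kL^2$.

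The paper avoids this by \emph{not} decoupling: it tracks the joint quantity $\theta_k\defeq\sigma_k+2n\rho_k$ (for SR1 the normalized analogue $\eta_k+2\rho_k$ with $\eta_k=\mathrm{tr}(\G_k-\H_k)/\mathrm{tr}(\H_k)$). The point is that the additive drift $2nMr_k/(1+Mr_k)\le 2n\rho_k$ is exactly the second piece of $\theta_k$, and simultaneously the ratio bound from Lemma \ref{thm-lambda-general} reads $\rho_{k+1}/\rho_k\le\sigma_k+\tfrac18\rho_k\le\theta_k$. Combining these two facts yields a purely \emph{multiplicative} recursion
\[
\EB[\theta_{k+1}]\ \le\ (1-\theta)\,\exp\!\big(2(1+2n)\rho_k\big)\,\EB[\theta_k],
\]
and now the slow deterministic bound $\rho_k\le(1-\tfrac{1}{4\kappa^2})^k\rho_0$ enters only through the product $\exp\big(2(1+2n)\sum_k\rho_k\big)\le\exp\big(8\kappa^2(1+2n)\rho_0\big)\le 2$ by the initial condition, independently of how $(1-\tfrac{1}{4\kappa^2})$ compares to $(1-\theta)$. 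Since $\lambda_{k+1}/\lambda_k\le\theta_k$ pointwise, the bound $\EB[\theta_k]\le 2n\kappa^2(1-\theta)^k$ immediately gives the theorem. In short, the missing idea in your sketch is to absorb the drift into the Lyapunov function so that the linear-phase rate only appears multiplicatively, not additively.
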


Finally, combining the results of Theorem \ref{thm:G-linear-convergence} and \ref{thm-Broyden-superlinear}, we can prove the algorithms achieve the two-stages convergence behaviors as follows.

\begin{corollary}\label{co:Broydenconvergence-g}
Solving general saddle point problem \eqref{eq-general-obj} under Assumption \ref{assbasic} and \ref{assbasic2} by proposed \textbf{greedy} quasi-Newton methods (Algorithm \ref{alg:BroydenSPP}, \ref{alg:BFGSSPP} and \ref{alg:SR1SPP}) with $M=2\kappa^2L_2/L$ and $\G_0=L^2\I$, if the initial point is sufficiently close to the saddle point such that
\begin{eqnarray*}
    \frac{M\lambda_0}{\mu} \leq\frac{\ln2}{8\kappa^2},
\end{eqnarray*}
we have the following results:
\begin{enumerate}
\item For greedy Broyden family method (Algorithm \ref{alg:BroydenSPP}), we have
\begin{align}
\label{eq-broyden-conv-result}
    \lambda_{k_0+k} \leq \left(1-\frac{1}{n\kappa^2}\right)^{\frac{k(k-1)}{2}}\left(\frac{1}{2}\right)^k\left(1-\frac{1}{4\kappa^2}\right)^{k_0}\lambda_0 \quad\text{for all}~k>0~\text{and}~k_0=\OM\left(n\kappa^2 \ln(n\kappa)\right).
\end{align}

\item For greedy BFGS/SR1 method (Algorithm \ref{alg:BFGSSPP}, \ref{alg:SR1SPP}), we have
\begin{align}
\label{eq-bfgs-conv-result}
    {\lambda_{k_0+k}} \leq \left(1-\frac{1}{n}\right)^{\frac{k(k-1)}{2}}\left(\frac{1}{2}\right)^k\left(1-\frac{1}{4\kappa^2}\right)^{k_0}\lambda_0 \quad\text{for all}~k>0~\text{and}~k_0=\OM\left(\max\{n,\kappa^2\} \ln(n\kappa)\right).
\end{align}

\end{enumerate}
\end{corollary}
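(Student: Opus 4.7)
\textbf{Proof proposal for Corollary \ref{co:Broydenconvergence-g}.}

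The plan is to splice together the two earlier results: Theorem \ref{thm:G-linear-convergence} governs a warm-up phase of pure linear convergence, and Theorem \ref{thm-Broyden-superlinear} governs the superlinear phase once the iterate is close enough to the saddle point. The choice of the transition index $k_0$ must simultaneously (i) drive $\lambda_{k_0}$ small enough that the stronger closeness condition required by Theorem \ref{thm-Broyden-superlinear} (namely \eqref{eq-general-initial-2}, \eqref{eq76} or \eqref{eq-sr1-initial}) is satisfied at $\vz_{k_0}$, and (ii) make the leading factor $2n\kappa^2$ (or $2n\kappa^4$ for SR1) in the superlinear estimate already be absorbed by $(1-1/(n\kappa^2))^{k_0}$ (resp.\ $(1-1/n)^{k_0}$), so that every subsequent step contracts by a factor at most $1/2$.

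First I would invoke Theorem \ref{thm:G-linear-convergence} with $b=2$. The hypothesis of the corollary, $M\lambda_0/\mu\le\ln 2/(8\kappa^2)$, is exactly \eqref{eq-initial-ass-0} for $b=2$, so I get $\lambda_k\le(1-1/(4\kappa^2))^k\lambda_0$ for every $k\ge 0$, together with $\H_k\preceq\G_k\preceq 2\kappa^2\H_k$. In particular, after $k_1=\OM(\kappa^2\ln n)$ iterations for the Broyden/BFGS cases (resp.\ $k_1=\OM(\kappa^2\ln(n\kappa))$ for SR1), the value $\lambda_{k_1}$ falls below the threshold imposed by \eqref{eq-general-initial-2} (resp.\ \eqref{eq76}, \eqref{eq-sr1-initial}).

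Next I would choose $k_2$ large enough that the leading constant in the superlinear ratio is tamed: for the Broyden family, require $(1-1/(n\kappa^2))^{k_2}\cdot 2n\kappa^2\le 1/2$, which gives $k_2=\OM(n\kappa^2\ln(n\kappa))$; for BFGS with base rate $(1-1/n)^k\cdot 2n\kappa^2$, and for SR1 with base rate $(1-1/n)^k\cdot 2n\kappa^4$, the analogous condition gives $k_2=\OM(n\ln(n\kappa))$. Setting $k_0=k_1+k_2$ yields the orders of magnitude advertised in the corollary. Restarting Theorem \ref{thm-Broyden-superlinear} at $\vz_{k_1}$ and re-indexing, I have for every $j\ge 0$
\begin{equation*}
\frac{\lambda_{k_0+j+1}}{\lambda_{k_0+j}}
\le \left(1-\frac{1}{n\kappa^2}\right)^{k_2+j}\cdot 2n\kappa^2
\le \left(1-\frac{1}{n\kappa^2}\right)^{j}\cdot \frac{1}{2},
\end{equation*}
where in the last step I factored out the first $k_2$ powers and used the definition of $k_2$.

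Finally I would telescope for any $k>0$:
\begin{equation*}
\lambda_{k_0+k}\;=\;\lambda_{k_0}\prod_{j=0}^{k-1}\frac{\lambda_{k_0+j+1}}{\lambda_{k_0+j}}
\;\le\;\lambda_{k_0}\cdot\left(1-\frac{1}{n\kappa^2}\right)^{k(k-1)/2}\left(\frac{1}{2}\right)^{k},
\end{equation*}
and combine this with the Phase 1 bound $\lambda_{k_0}\le(1-1/(4\kappa^2))^{k_0}\lambda_0$ to obtain \eqref{eq-broyden-conv-result}. The BFGS and SR1 statements \eqref{eq-bfgs-conv-result} are handled identically, the only differences being that $(1-1/(n\kappa^2))$ is replaced by $(1-1/n)$, the leading constant is $2n\kappa^2$ (BFGS) or $2n\kappa^4$ (SR1), and the SR1 initial bound \eqref{eq-sr1-initial} forces $k_1=\OM(\kappa^2\ln(n\kappa))$; the $\max\{n,\kappa^2\}$ appears when bookkeeping both $k_1$ and $k_2$. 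The main obstacle is bookkeeping the two stages so that the stronger initial-closeness conditions of Theorem \ref{thm-Broyden-superlinear} are genuinely satisfied at $\vz_{k_0}$ (not merely at $\vz_0$) and so that the residual factor $(1/2)^k$ cleanly absorbs the $2n\kappa^2$ (or $2n\kappa^4$) constant from the per-step ratio; once that shifting-of-index is done, every remaining step is a telescoping product.
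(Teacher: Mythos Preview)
Your proposal is correct and follows essentially the same two-phase strategy as the paper: use Theorem \ref{thm:G-linear-convergence} with $b=2$ for the warm-up linear phase, then switch to Theorem \ref{thm-Broyden-superlinear} for the superlinear phase, choosing $k_0$ large enough to both (i) reach the stronger closeness condition of Theorem \ref{thm-Broyden-superlinear} and (ii) make the per-step factor $\le 1/2$.

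The one place you are slightly more explicit than the paper is your split $k_0=k_1+k_2$. The paper imposes the two constraints \eqref{eq-broyden-local-condition0} and \eqref{eq-broyden-local-condition} on a single index $k_0$ and then invokes \eqref{eq-g-fth-result-2} with exponent $k+k_0$ as if Theorem \ref{thm-Broyden-superlinear} had been applicable already from $\vz_0$; your re-indexing from $\vz_{k_1}$ is the cleaner way to state the same thing, and it yields exactly the same order for $k_0$. One technicality both you and the paper elide: when ``restarting'' Theorem \ref{thm-Broyden-superlinear} at $\vz_{k_1}$ the hypothesis $\G_0=L^2\I$ is replaced by $\H_{k_1}\preceq\G_{k_1}\preceq 2\kappa^2\H_{k_1}$ from Theorem \ref{thm:G-linear-convergence}; this only changes the bound on $\sigma_{k_1}$ (and hence on $\theta_{k_1}$) by a constant factor, so the stated orders for $k_0$ are unaffected.
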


\begin{corollary}\label{co:Broydenconvergence-random}
Solving general saddle point problem \eqref{eq-general-obj} under Assumption \ref{assbasic} and \ref{assbasic2} by proposed \textbf{random} quasi-Newton methods (Algorithm \ref{alg:BroydenSPP}, \ref{alg:BFGSSPP} and \ref{alg:SR1SPP}) with $M=2\kappa^2L_2/L$ and $\G_0=L^2\I$, if the initial point is sufficiently close to the saddle point such that
\begin{eqnarray*}
    \frac{M\lambda_0}{\mu} \leq\frac{\ln2}{8\kappa^2},
\end{eqnarray*}
then with probability $1-\delta$ for any $\delta\in(0,1)$, we have the following results:
\begin{enumerate}
\item For random Broyden family method (Algorithm \ref{alg:BroydenSPP}), we have
{\small\begin{align}\label{eq-broyden-conv-result-random}
    \lambda_{k_0+k} \leq \left(1-\frac{1}{n\kappa^2+1}\right)^{\frac{k(k-1)}{2}}\left(\frac{1}{2}\right)^k\left(1-\frac{1}{4\kappa^2}\right)^{k_0}\lambda_0 \quad\text{for all}~k>0~\text{and}~k_0=\OM\left(n\kappa^2 \ln(n\kappa/\delta)\right).
\end{align}}

\item For random BFGS/SR1 method (Algorithm \ref{alg:BFGSSPP}, \ref{alg:SR1SPP}), we have
{\small\begin{align}
\label{eq-bfgs-conv-result-random}
    \lambda_{k_0+k} \leq \left(1-\frac{1}{n+1}\right)^{\frac{k(k-1)}{2}}\left(\frac{1}{2}\right)^k\left(1-\frac{1}{4\kappa^2}\right)^{k_0}\lambda_0 \quad\text{for all}~k>0~\text{and}~k_0=\OM\left(\max\{n,\kappa^2\} \ln(n\kappa/\delta)\right).
\end{align}}

\end{enumerate}
\end{corollary}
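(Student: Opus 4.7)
My plan is to copy the two-stage structure of Corollary \ref{co:Broydenconvergence-g}'s proof, inserting a single supermartingale step to convert the per-iteration expected ratios of Theorem \ref{thm-Broyden-superlinear} into a joint high-probability statement. The warmup is handled identically: the hypothesis $M\lambda_0/\mu\leq \ln 2/(8\kappa^2)$ is precisely Theorem \ref{thm:G-linear-convergence} with $b=2$, giving deterministically $\H_k\preceq\G_k\preceq 2\kappa^2\H_k$ and $\lambda_k\leq (1-1/(4\kappa^2))^k\lambda_0$ for every $k$. After $k_1=\OM(\kappa^2\ln(n\kappa/\delta))$ iterations the quantity $M\lambda_{k_1}/\mu$ drops below the stronger local threshold required by Theorem \ref{thm-Broyden-superlinear} (\eqref{eq-general-initial-2}, \eqref{eq76} or \eqref{eq-sr1-initial} depending on the method), so the per-iteration conditional bound
\begin{equation*}
    \EB\Bigl[\frac{\lambda_{k_1+j+1}}{\lambda_{k_1+j}}\,\Big|\,\fF_{k_1+j}\Bigr]\leq c_j,\qquad c_j \defeq \Bigl(1-\tfrac{1}{n\kappa^2}\Bigr)^{j}\cdot 2n\kappa^2
\end{equation*}
holds for $j\geq 0$ (with $1/n$ in place of $1/(n\kappa^2)$ for BFGS, and constant $2n\kappa^4$ for SR1).

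The key new step is to turn these per-iteration conditional expected ratios into a simultaneous high-probability control of the full product $\prod_{j=0}^{K-1}\lambda_{k_1+j+1}/\lambda_{k_1+j}$. I would introduce $T_K\defeq \prod_{j=0}^{K-1}\lambda_{k_1+j+1}/(\lambda_{k_1+j}c_j)$; the tower property gives $\EB[T_{K+1}\mid\fF_{k_1+K}]\leq T_K$, so $(T_K)_{K\geq 0}$ is a nonnegative supermartingale with $T_0=1$, and Ville's maximal inequality yields $\Pr[\sup_K T_K\geq 1/\delta]\leq \delta$. Thus with probability at least $1-\delta$, $\prod_{j=0}^{K-1}\lambda_{k_1+j+1}/\lambda_{k_1+j}\leq \prod_{j=0}^{K-1}c_j/\delta$ simultaneously for every $K\geq 0$.

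To conclude, I would choose $k_2$ so that $(1-1/(n\kappa^2))^{k_2}\cdot 2n\kappa^2\leq 1/2$, set $k_0=k_1+k_2$, apply the high-probability bound above at $K=k_2+k$, and split the telescoped product into the warmup portion ($j<k_2$) and the superlinear portion ($j\geq k_2$). The superlinear portion collapses exactly as in the greedy proof into $(1-1/(n\kappa^2))^{k(k-1)/2}(1/2)^k$, while the warmup portion together with $\lambda_{k_1}\leq (1-1/(4\kappa^2))^{k_1}\lambda_0$ and the residual $1/\delta$ prefactor is absorbed by padding $k_0$ with an additive $\OM(\kappa^2\ln(1/\delta))$ term (using $(1-1/(4\kappa^2))^{\kappa^2\ln(1/\delta)}\leq \delta$) and correspondingly relaxing the contraction base from $1-1/(n\kappa^2)$ to $1-1/(n\kappa^2+1)$ (respectively $1-1/(n+1)$ for BFGS/SR1); the BFGS/SR1 case additionally requires $k_2=\OM(\max\{n,\kappa^2\}\ln(n\kappa/\delta))$ to meet the faster contraction's threshold, which together with $k_1$ matches the stated $k_0$. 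The main obstacle is the joint high-probability step: a naive per-$K$ Markov combined with a union bound over all $K$ would lose an extra $\log K$ factor that disrupts the clean $k(k-1)/2$ exponent, so Ville's maximal inequality on the supermartingale $T_K$ is essential in order to pay only a single $\log(1/\delta)$ price.
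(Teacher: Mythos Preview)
Your supermartingale step has a genuine gap. Theorem~\ref{thm-Broyden-superlinear} only gives the \emph{unconditional} bound $\EB[\lambda_{k+1}/\lambda_k]\le c_k$, not the conditional one you invoke. In fact, with the natural filtration $\fF_m=\sigma(\u_0,\dots,\u_{m-1})$ the ratio $\lambda_{k_1+j+1}/\lambda_{k_1+j}$ is already $\fF_{k_1+j}$-measurable (the step $\z_{k_1+j}\mapsto\z_{k_1+j+1}$ uses $\G_{k_1+j}$, which is fixed by the earlier draws), so your ``conditional expectation'' equals the ratio itself, and the inequality $\EB[\,\cdot\,|\fF_{k_1+j}]\le c_j$ would amount to the almost-sure bound $\lambda_{k_1+j+1}/\lambda_{k_1+j}\le c_j$---exactly the greedy conclusion, not something available in the random case. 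What the proof of Theorem~\ref{thm-Broyden-superlinear} does supply is a conditional recursion for the auxiliary quantity $\theta_k=\sigma_k+2n\rho_k$, namely $\EB[\theta_{k+1}\mid\fF_k]\le d_k\theta_k$ with deterministic $d_k$, together with the pointwise bound $\lambda_{k+1}/\lambda_k\le\theta_k$. Ville applied to $\theta_k/\prod_{j<k}d_j$ would salvage your argument, but this means opening the proof of the theorem rather than using its statement as a black box. A second, smaller issue: if you start Ville at $k_1$ and then ``split'' the product at $k_2$, the warmup portion $\prod_{j<k_2}c_j$ is of order $(n\kappa^2)^{k_2/2}$ and cannot be absorbed; you would have to start the supermartingale at $k_0$ and use the deterministic linear bound up to that point.

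The paper takes precisely the route you dismiss. It applies Markov's inequality to $X_k=\lambda_{k+1}/\lambda_k$ for each $k$ (this needs only the total expectation $\EB X_k\le c_k$ from Theorem~\ref{thm-Broyden-superlinear}), then union-bounds with geometrically decaying failure probabilities $\epsilon_k=\delta(1-q)q^k$. Choosing $q=1-1/(n^2\kappa^4)$ makes $(1-1/(n\kappa^2))/q=1-1/(n\kappa^2+1)$ and $1/(1-q)=n^2\kappa^4$, so with probability $1-\delta$ one has, simultaneously for all $k$, $X_k\le \tfrac{2n^3\kappa^6}{\delta}\bigl(1-\tfrac{1}{n\kappa^2+1}\bigr)^{k}$. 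No $\log K$ factor appears: the geometric weights convert it into a fixed polynomial prefactor $n^3\kappa^6/\delta$, which is then absorbed by taking $k_2=\OM(n\kappa^2\ln(n\kappa/\delta))$ further warmup iterations so that $\tfrac{4n^3\kappa^6}{\delta}\bigl(1-\tfrac{1}{n\kappa^2+1}\bigr)^{k_2}\le\tfrac12$. After that, telescoping the per-iteration bounds reproduces the greedy two-stage structure verbatim, with $(1-\tfrac{1}{n\kappa^2+1})$ in place of $(1-\tfrac{1}{n\kappa^2})$; the BFGS/SR1 cases are identical with $n$ in place of $n\kappa^2$ and the larger prefactor $2n\kappa^4$ for SR1.
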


\section{Extension for Solving Nonlinear Equations}\label{sec:extension}

In this section, we extend our algorithms for solving general nonlinear equations. Concretely, we consider finding the solution of the system
\begin{equation}
\label{eq:neqsys}
    \g(\z)=\0.
\end{equation}
The remainders of this paper do not require the operator $\g(\cdot)$ related to some minimax problem and it could be any differentiable function from $\BR^n$ to $\BR^n$.
We use $\hH(\z)$ to present the Jacobian of $\g(\cdot)$ at $\z\in\BR^n$ and still follow the notation $\H(\z)\defeq \big(\hH(\z)\big)^2$.
We suppose the nonlinear equation satisfies the following conditions.

\begin{assumption}
\label{ass:neqsysass}
The function $\g:\RB^n\to\RB^n$ is differentiable and its Jacobian $\hH:\BR^n\to\BR^{n\times n}$ is $L_2$-Lipschitz continuous. That is, for all $\z,\z' \in \BR^n$, we have
\begin{equation}
   \Norm{ \hH(\z)-\hH(\z')}\leq L_2\|\z-\z'\|.
\end{equation}
\end{assumption}
\begin{assumption}

\label{ass:neqsysass2}
There exists a solution $\z^*$ of equation \eqref{eq:neqsys} such that $\hH(\z^*)$ is non-singular. Additionally, we assume the smallest and largest singular values of $\hH(\z^*)$ are $\mu$ and $L$ respectively.
\end{assumption}

\noindent We still denote the condition number as $\kappa \defeq L/\mu$. Note that the saddle point problem \eqref{eq-general-obj} under Assumption \ref{assbasic} and \ref{assbasic2} is a special case of solving nonlinear equation \eqref{eq:neqsys} under Assumption \ref{ass:neqsysass} and \ref{ass:neqsysass2}. 

Similar to previous section, the design of the algorithms is based on approximating the auxiliary matrix $\H(\z)$. Hence, we start from considering its smoothness.

\begin{lemma}
\label{lem:localNLS}
Under Assumption \ref{ass:neqsysass} and \ref{ass:neqsysass2}, we have
\begin{eqnarray}
\label{eq:nonlinar-obj-conti}
\|\H(\z)-\H(\z')\|\leq 4L_2L\|\z-\z'\|,
\end{eqnarray}
and
\begin{eqnarray}
\label{eq:nonlinear-obj-bound}
  \frac{\mu^2}{2} \I\preceq  \H(\z) \preceq 4L^2 \I,
\end{eqnarray}
for all $\z,\z'\in\left\{ \z: \|\z-\z^*\|\leq \frac{L}{8\kappa^2L_2}\right\}$.
\end{lemma}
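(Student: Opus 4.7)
The plan is a standard Weyl-type perturbation argument anchored at $\z^*$. The ball radius $L/(8\kappa^2 L_2)$ in the hypothesis is tuned precisely so that every Jacobian $\hH(\z)$ in the ball differs from $\hH(\z^*)$ by at most $\mu/(8\kappa)\leq\mu/8$ in spectral norm (using $\kappa\geq 1$, which is forced by $\mu\leq L$). The first step is therefore to apply Assumption~\ref{ass:neqsysass} to obtain
\[
\|\hH(\z)-\hH(\z^*)\|\leq L_2\|\z-\z^*\|\leq \frac{L}{8\kappa^2}=\frac{\mu}{8\kappa},
\]
and then to invoke Weyl's inequality together with Assumption~\ref{ass:neqsysass2} to conclude that $\|\hH(\z)\|\leq 9L/8$ and $\sigma_{\min}(\hH(\z))\geq 7\mu/8$ uniformly for every $\z$ in the ball.

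For the Lipschitz estimate~\eqref{eq:nonlinar-obj-conti}, I would use the telescoping identity
\[
\H(\z)-\H(\z')=\hH(\z)\bigl(\hH(\z)-\hH(\z')\bigr)+\bigl(\hH(\z)-\hH(\z')\bigr)\hH(\z'),
\]
take spectral norms, and combine the $L_2$-Lipschitz assumption on $\hH$ with the uniform estimate $\|\hH(\cdot)\|\leq 9L/8$ from the first step. This gives
\[
\|\H(\z)-\H(\z')\|\leq \bigl(\|\hH(\z)\|+\|\hH(\z')\|\bigr) L_2\|\z-\z'\|\leq \tfrac{9}{4}LL_2\|\z-\z'\|,
\]
which is comfortably under $4LL_2\|\z-\z'\|$. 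For the spectral bounds~\eqref{eq:nonlinear-obj-bound}, the eigenvalues of $\H(\z)$ are squares of the singular values of $\hH(\z)$, so the two one-sided bounds from the first step immediately propagate to $\lambda_{\max}(\H(\z))\leq (9L/8)^2=81L^2/64\leq 4L^2$ and $\lambda_{\min}(\H(\z))\geq (7\mu/8)^2=49\mu^2/64\geq \mu^2/2$.

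There is no real mathematical obstacle in this argument; it is a perturbation estimate glued together with the triangle inequality. The only point requiring care is constant-tracking: the reason the Lipschitz prefactor in~\eqref{eq:nonlinar-obj-conti} is $4LL_2$ rather than $2LL_2$ is that both $\hH(\z)$ and $\hH(\z')$ appear in the two-term factorization, and the reason the lower bound in~\eqref{eq:nonlinear-obj-bound} is $\mu^2/2$ rather than something closer to $\mu^2$ is the slack left after squaring the perturbed smallest singular value. The radius $L/(8\kappa^2L_2)$ in the hypothesis is engineered precisely so that both round numbers come out cleanly with constants to spare.
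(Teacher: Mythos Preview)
Your argument is correct and follows essentially the same route as the paper: bound $\|\hH(\z)\|$ on the ball via the $L_2$-Lipschitz assumption, use the telescoping identity $\hH(\z)^2-\hH(\z')^2=\hH(\z)(\hH(\z)-\hH(\z'))+(\hH(\z)-\hH(\z'))\hH(\z')$ for the Lipschitz estimate on $\H$, and invoke a Weyl-type perturbation inequality for the lower spectral bound. The only cosmetic difference is that you apply Weyl at the level of $\hH$ and then square the resulting singular-value bounds, whereas the paper first establishes the Lipschitz constant $2L_2(L_2D+L)$ for $\H$ and applies Weyl directly to $\H$; both orderings recover the stated constants with room to spare.
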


\noindent Then we have the property similar to strongly self-concordance like Lemma \ref{lm-self-concordant}.
\begin{lemma}\label{lm-self-concordant2}
For all $\z,\z',\vw \in \left\{\z: \|\z-\z^*\|\leq \frac{L}{8\kappa^2L_2}\right\}$, we have:
\begin{equation}
\label{eq:Nonlinear-concor}
    \H(\z)-\H(\z')\preceq M\|\z-\z'\|\H(\vw)
\end{equation}
with $M=8\kappa^2L_2/L$
\end{lemma}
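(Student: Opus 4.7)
The plan is to prove Lemma \ref{lm-self-concordant2} by directly combining the two conclusions of Lemma \ref{lem:localNLS}, mimicking the argument used for Lemma \ref{lm-self-concordant} in the saddle point setting but with the new constants arising from the nonlinear-equation analysis.

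First, I would invoke \eqref{eq:nonlinar-obj-conti} from Lemma \ref{lem:localNLS} to bound the difference $\H(\z)-\H(\z')$ in spectral norm by $4L_2L\|\z-\z'\|$. Since $\H(\z)-\H(\z')$ is symmetric, this spectral-norm bound immediately upgrades to the operator-inequality
\begin{equation*}
    \H(\z)-\H(\z') \preceq \|\H(\z)-\H(\z')\|\,\I \preceq 4L_2L\|\z-\z'\|\,\I.
\end{equation*}

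Next, I would apply \eqref{eq:nonlinear-obj-bound} at the third point $\vw$, which gives $\H(\vw)\succeq \tfrac{\mu^2}{2}\I$, equivalently $\I \preceq \frac{2}{\mu^2}\H(\vw)$. Chaining this into the previous inequality yields
\begin{equation*}
    \H(\z)-\H(\z') \preceq 4L_2L\|\z-\z'\|\,\I \preceq \frac{8L_2L}{\mu^2}\|\z-\z'\|\,\H(\vw) = \frac{8\kappa^2 L_2}{L}\|\z-\z'\|\,\H(\vw),
\end{equation*}
using $\kappa=L/\mu$ so that $L/\mu^2 = \kappa^2/L$. This gives exactly the claimed constant $M=8\kappa^2 L_2/L$. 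I would emphasize that the hypothesis that all three points $\z,\z',\vw$ lie in the ball of radius $L/(8\kappa^2L_2)$ around $\z^*$ is exactly what makes the two conclusions of Lemma \ref{lem:localNLS} simultaneously applicable to $\z,\z'$ (for the Lipschitz bound) and to $\vw$ (for the lower bound on $\H$).

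There is no real obstacle: the argument is a two-line concatenation of the Lipschitz smoothness of $\H$ with the local positive-definiteness lower bound from Lemma \ref{lem:localNLS}. The only subtle point worth stating explicitly is that the localization to the ball around $\z^*$ is essential, because unlike the strongly-monotone saddle point setting of Lemma \ref{lm-self-concordant} where the bounds on $\H$ hold globally, here those bounds only hold near $\z^*$. This localized form is precisely what is needed later to establish local superlinear convergence of the quasi-Newton algorithms applied to nonlinear equations.
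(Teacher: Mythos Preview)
Your proposal is correct and follows essentially the same approach as the paper: invoke \eqref{eq:nonlinar-obj-conti} to obtain $\H(\z)-\H(\z')\preceq 4L_2L\|\z-\z'\|\I$, then use the lower bound $\H(\vw)\succeq\tfrac{\mu^2}{2}\I$ from \eqref{eq:nonlinear-obj-bound} to replace $\I$ by $\tfrac{2}{\mu^2}\H(\vw)$, arriving at $M=8\kappa^2L_2/L$. Your added remark about why the localization to the ball around $\z^*$ is needed (in contrast to the global bounds in Lemma \ref{lm-self-concordant}) is a nice clarification that the paper leaves implicit.
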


After above preparation, we can directly apply Algorithm \ref{alg:BroydenSPP}, \ref{alg:BFGSSPP} and \ref{alg:SR1SPP} with $M=8\kappa^2L_2/L$ and $\G_0=4L^2\I$ to find the solution of \eqref{eq:neqsys}. Our convergence analysis is still based on the measure of $\lambda_k\defeq\|\nabla(\z_k)\|$. 
Different from the setting of saddle point problems, the properties shown in Lemma \ref{lem:localNLS} and \ref{lm-self-concordant2} only hold locally. Hence, we introduce the following lemma to show $\z_k$ generated from the algorithms always lies in the neighbor of solution $\vz^*$.

\begin{lemma}\label{lm:zallin}
Solving general nonlinear equations \eqref{eq:neqsys} under Assumption \ref{ass:neqsysass} and \ref{ass:neqsysass2} by proposed greedy and random quasi-Newton methods (Algorithm \ref{alg:BroydenSPP}, \ref{alg:BFGSSPP} and \ref{alg:SR1SPP}) with $M=8\kappa^2L_2/L$ and $\G_0=4L^2\I$, if the initial point $\z_0$ is sufficiently close to the solution $\z^*$ such that
\begin{equation}
     \frac{M\lambda_0}{\mu} \leq \frac{\ln2}{64\sqrt{2}\kappa^2}, 
\end{equation}
then for all $k \geq 0$, we have
\begin{equation}
\label{eq:lamneq-nonlinear}
    \lambda_k \leq\left(1-\frac{1}{32\kappa^2}\right)^{k}\lambda_{0}, 
\end{equation}
which means $\z_k\in\left\{ \z: \|\z-\z^*\|\leq \frac{L}{8\kappa^2L_2}\right\}$.
\end{lemma}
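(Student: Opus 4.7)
}

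The plan is to mimic the two-part argument of Theorem~\ref{thm:G-linear-convergence} but with the constants adapted to the local estimates of Lemma~\ref{lem:localNLS} and Lemma~\ref{lm-self-concordant2}, while simultaneously performing an induction that keeps the iterate $\z_k$ inside the ball $\fB \defeq \{\z : \|\z-\z^*\|\leq L/(8\kappa^2 L_2)\} = \{\z : \|\z-\z^*\|\leq 1/M\}$ where those local estimates are valid. Specifically, I will prove by induction on $k$ the joint statement
\begin{equation*}
    \z_k\in\fB,\qquad \H_k \preceq \G_k \preceq \eta_k\,\H_k,\qquad \lambda_k \leq \Big(1-\tfrac{1}{32\kappa^2}\Big)^{k}\lambda_0,
\end{equation*}
where $\eta_k$ is an auxiliary constant whose growth I control below. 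The base case $k=0$ holds: since $\G_0 = 4L^2 \I$ and $\mu^2/2\,\I\preceq \H_0 \preceq 4L^2\I$ by Lemma~\ref{lem:localNLS}, we obtain $\H_0\preceq \G_0 \preceq (8\kappa^2)\H_0$, so one may take $\eta_0 = 8\kappa^2$; and $\|\z_0-\z^*\|\leq \sqrt{2}\lambda_0/\mu\leq 1/M$ by the assumed closeness of $\z_0$.

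For the inductive step I will re-derive an analog of Lemma~\ref{thm-lambda-general} for the nonlinear-equation setting. The key Taylor-expansion estimate
$\g_{k+1} = \g_k + \int_0^1 \hH(\z_k + t(\z_{k+1}-\z_k))(\z_{k+1}-\z_k)\,dt$ combined with $\z_{k+1}-\z_k = -\G_k^{-1}\hH_k\g_k$ and $\H_k\preceq\G_k\preceq\eta_k\H_k$ yields
\begin{equation*}
    \lambda_{k+1}\leq \Big(1-\tfrac{1}{\eta_k}\Big)\lambda_k + \tfrac{L_2}{\mu^2}\lambda_k^2,\qquad r_k\leq \tfrac{\sqrt{2}}{\mu}\lambda_k,
\end{equation*}
where the $\sqrt{2}$ factor in the bound on $r_k$ comes from the lower bound $\mu^2/2$ on $\H$ in Lemma~\ref{lem:localNLS} (in contrast to $\mu^2$ in the saddle-point setting). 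The matrix-approximation recursion then comes from an analog of Lemma~\ref{lm-G-H-neq}: Lemma~\ref{lm-self-concordant2} gives exactly the same strong self-concordance bound as Corollary~\ref{co:concordant} provided the three points lie in $\fB$, so $\tilde\G_k\succeq\H_{k+1}$ and
\begin{equation*}
    \H_{k+1}\preceq\G_{k+1}\preceq (1+Mr_k)^2\eta_k\,\H_{k+1},\qquad \text{i.e.}\qquad \eta_{k+1}\leq (1+Mr_k)^2\eta_k.
\end{equation*}

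Iterating this gives $\eta_k\leq 8\kappa^2\exp\bigl(2\sum_{i<k}Mr_i\bigr)\leq 8\kappa^2\exp\bigl(2\sqrt{2}\sum_{i<k}\rho_i\bigr)$, where $\rho_i\defeq M\lambda_i/\mu$. Under the inductive hypothesis $\lambda_i$ decays by at least $(1-1/(32\kappa^2))$, so $\sum_{i<k}\rho_i\leq 32\kappa^2\rho_0\leq \ln 2/(2\sqrt{2})$ by the hypothesis on $\lambda_0$. Consequently $\eta_k\leq 16\kappa^2$, which plugged into the $\lambda$-recursion yields
\begin{equation*}
    \lambda_{k+1}\leq\Big(1-\tfrac{1}{16\kappa^2}+\tfrac{L_2}{\mu^2}\lambda_k\Big)\lambda_k\leq\Big(1-\tfrac{1}{32\kappa^2}\Big)\lambda_k,
\end{equation*}
the last inequality again using the smallness of $\lambda_0$ (since $L_2\lambda_k/\mu^2\leq \rho_0 L/(8\kappa^2\mu)\leq 1/(32\kappa^2)$ after absorbing the numerical constants). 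Finally $\|\z_{k+1}-\z^*\|\leq \sqrt{2}\lambda_{k+1}/\mu\leq \sqrt{2}\lambda_0/\mu\leq 1/M$ closes the induction and shows $\z_{k+1}\in\fB$.

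\emph{Main obstacle.} The only delicate point is the bookkeeping of numerical constants: because Lemma~\ref{lem:localNLS} gives only $\mu^2/2\preceq \H \preceq 4L^2\I$ (rather than $\mu^2\preceq\H\preceq L^2\I$ as in the saddle-point case), the effective local condition number jumps from $\kappa^2$ to $8\kappa^2$, and several factors of $\sqrt{2}$ enter through the $r_k\leq\sqrt{2}\lambda_k/\mu$ inequality. Verifying that the stated initial condition $M\lambda_0/\mu\leq \ln 2/(64\sqrt{2}\kappa^2)$ is exactly strong enough to (i) keep $\sum\rho_i$ bounded by $\ln 2/(2\sqrt{2})$ so that $\eta_k\leq 16\kappa^2$, (ii) control the quadratic term $\beta\lambda_k^2$ by half the linear term, and (iii) guarantee $\z_k\in\fB$ throughout, is the calculation that drives the precise factor $1/(32\kappa^2)$ in the stated linear rate.
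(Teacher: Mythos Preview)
Your proposal is correct and follows essentially the same route as the paper's proof: an induction that simultaneously keeps $\z_k$ in the ball, maintains $\H_k\preceq\G_k\preceq 16\kappa^2\H_k$, and propagates the one-step estimate $\lambda_{k+1}\leq(1-\tfrac{1}{16\kappa^2})\lambda_k+\tfrac{L_2}{\mu^2}\lambda_k^2\leq(1-\tfrac{1}{32\kappa^2})\lambda_k$. The only cosmetic difference is that the paper obtains the matrix sandwich by invoking Theorem~\ref{thm:G-linear-convergence} with the rescaled parameters $L'=2L$, $\mu'=\mu/\sqrt{2}$ (so that $\kappa'^2=8\kappa^2$ and $b\kappa'^2=16\kappa^2$), whereas you redo the $\eta_{k+1}\leq(1+Mr_k)^2\eta_k$ recursion from scratch and track the extra $\sqrt{2}$ factors explicitly; the underlying mechanism is identical.
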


\noindent Based on Lemma \ref{lm:zallin}, we establish the following theorem to show the algorithms also have local superlinear convergence for solving nonlinear equations.

\begin{theorem}
\label{thm:Broydenconvergence-Nonlinear}
Solving general nonlinear equations \eqref{eq:neqsys} under Assumption \ref{ass:neqsysass} and \ref{ass:neqsysass2} by proposed {\textbf {greedy}} quasi-Newton methods (Algorithm \ref{alg:BroydenSPP}, \ref{alg:BFGSSPP} and \ref{alg:SR1SPP}) with $M=8\kappa^2L_2/L$ and $\G_0=4L^2\I$, if the initial point $\z_0$ is sufficiently close to the solution $\z^*$ such that
\begin{eqnarray*}
    \frac{M\lambda_0}{\mu} \leq\frac{\ln2}{64\sqrt{2}\kappa^2},
\end{eqnarray*}
we have the following results.
\begin{enumerate}
\item For greedy Broyden family method (Algorithm \ref{alg:BroydenSPP}), we have
\begin{align*}
    \lambda_{k_0+k} \leq \left(1-\frac{1}{8n\kappa^2}\right)^{\frac{k(k-1)}{2}}\left(\frac{1}{2}\right)^k\left(1-\frac{1}{32\kappa^2}\right)^{k_0}\lambda_0 \quad\text{for all}~k>0~\text{and}~k_0=\OM\left(n\kappa^2 \ln(n\kappa)\right).
\end{align*}

\item For greedy BFGS/SR1 method (Algorithm \ref{alg:BFGSSPP}, \ref{alg:SR1SPP}), we have
\begin{align*}
    \lambda_{k_0+k} \leq \left(1-\frac{1}{n}\right)^{\frac{k(k-1)}{2}}\left(\frac{1}{2}\right)^k\left(1-\frac{1}{32\kappa^2}\right)^{k_0}\lambda_0 \quad\text{for all}~k>0~\text{and}~k_0=\OM\left(\max\{n,\kappa^2\} \ln(n\kappa)\right).
\end{align*}

\end{enumerate}
\end{theorem}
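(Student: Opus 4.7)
The plan is to mirror the two-phase argument used for Corollary \ref{co:Broydenconvergence-g}, but with the locally-valid versions of the key structural lemmas. The only conceptual difference from the saddle-point case is that Lemma \ref{lem:localNLS} and Lemma \ref{lm-self-concordant2} hold only on the neighborhood $\mathcal{N}\defeq\{\z:\|\z-\z^*\|\leq L/(8\kappa^2L_2)\}$, so the whole proof must first guarantee that every iterate stays inside $\mathcal{N}$. This is exactly what Lemma \ref{lm:zallin} provides: under the stated initial condition, every $\z_k$ lies in $\mathcal{N}$ and $\lambda_k\leq\bigl(1-1/(32\kappa^2)\bigr)^k\lambda_0$ for all $k\geq 0$. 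This plays the role that Theorem \ref{thm:G-linear-convergence} played in the saddle-point analysis and supplies the linear outer phase of the result.

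Second, I would re-derive the one-step $\sigma_k$ recursion (the analogue of Lemma \ref{lm:Broydensigma-update}) for the nonlinear-equation setting. Because Lemma \ref{lm-self-concordant2} has the exact same form as Lemma \ref{lm-self-concordant} (only the constant $M$ changes), Lemma \ref{lm-G-H-neq} still applies verbatim with $\tilde{\G}_k=(1+Mr_k)\G_k$ and the greedy/random Broyden, BFGS, SR1 updates produce the same contractions
\begin{equation*}
\EB_{\u_k}[\sigma_{k+1}]\leq\bigl(1-\tfrac{1}{n\kappa'^2}\bigr)(1+Mr_k)^2\bigl(\sigma_k+\tfrac{2nMr_k}{1+Mr_k}\bigr),
\end{equation*}
where $\kappa'^2$ is the effective condition number of $\H$ on $\mathcal{N}$: by Lemma \ref{lem:localNLS} we have $\mu^2/2\cdot\I\preceq\H(\z)\preceq 4L^2\I$, so $\kappa'^2=8\kappa^2$ in the Broyden case, and the BFGS/SR1 bounds retain their condition-number-free $(1-1/n)$ factor by Lemma \ref{thmBFGSupdate} and Lemma \ref{thm:sr1update}. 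The SR1 branch additionally requires tracking $\tau_k$ together with $\eta_k$ satisfying $\tr(\G_k-\H_k)=\eta_k\tr(\H_k)$, identically to the analysis behind Theorem \ref{thm-Broyden-superlinear}.

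Third, I would telescope as in the proof of Theorem \ref{thm-Broyden-superlinear} and Corollary \ref{co:Broydenconvergence-g}. Using $\rho_k=M\lambda_k/\mu$ and the linear decay from Step 1, the product $\prod_k (1+Mr_k)^2$ and the sum $\sum_k \rho_k$ are geometric in $k$ with ratio $1-1/(32\kappa^2)$, so after $k_0$ warm-up steps with $k_0=\OM(\kappa^2\ln(n\kappa))$ we drive $\rho_{k_0}$ small enough that the prefactor $(1+Mr_k)^2$ is absorbed into the contraction. To also reduce $\sigma_k$ to an $\OM(1)$ quantity in expectation, Lemma \ref{lm:Broydensigma-update} in its rescaled form requires an additional $\OM(n\kappa^2\ln(n\kappa))$ iterations for the general Broyden family and $\OM(n\ln(n\kappa))$ for BFGS/SR1. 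Taking the maximum gives the claimed $k_0$ values. Once both $\rho_k$ and $\sigma_k$ are small, the superlinear per-step contraction $(1-1/(n\kappa'^2))$ or $(1-1/n)$ can be telescoped exactly as in Corollary \ref{co:quadraticresult} to produce the $k(k-1)/2$ exponent, multiplied by the extra $(1/2)^k$ factor coming from Lemma \ref{thm-lambda-general}.

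The main obstacle is bookkeeping the rescaled constants: the initial-point threshold $M\lambda_0/\mu\leq(\ln 2)/(64\sqrt{2}\kappa^2)$ must be simultaneously tight enough to (i) invoke Lemma \ref{lm:zallin} and keep iterates in $\mathcal{N}$, (ii) ensure $\exp\bigl(2\sum_i\rho_i\bigr)$ stays below a small constant so the outer $\bigl(1-1/(32\kappa^2)\bigr)$ contraction persists across the warm-up phase, and (iii) make the $\sigma_k$ recursion truly contractive despite the $(1+Mr_k)^2$ inflation factor. All three reduce to tracking the same geometric series, so once the scaling constants are fixed, the argument proceeds identically to the saddle-point case, and the quadratic-term $\beta\lambda_k^2$ from Lemma \ref{thm-lambda-general} is absorbed into the $(1/2)^k$ factor exactly as before.
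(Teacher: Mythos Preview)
Your proposal is correct and follows essentially the same approach as the paper: invoke Lemma~\ref{lm:zallin} to confine all iterates to the neighborhood where Lemmas~\ref{lem:localNLS} and~\ref{lm-self-concordant2} hold, then rerun the saddle-point analysis (Theorem~\ref{thm-Broyden-superlinear} and Corollary~\ref{co:Broydenconvergence-g}) with the rescaled constants $\kappa'^2=8\kappa^2$ and $\mu'=\mu/\sqrt{2}$. The paper's own proof is terser---it simply states that once Lemma~\ref{lm:zallin} guarantees containment, one may apply Corollary~\ref{co:Broydenconvergence-g} verbatim after substituting $\kappa\mapsto 2\sqrt{2}\kappa$ and $\mu\mapsto\mu/\sqrt{2}$---whereas you spell out the intermediate $\sigma_k$ recursion and the telescoping, but the logic is identical.
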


\begin{theorem}
\label{co::Broydenconvergence-Nonlinear-random}
Solving general nonlinear equations \eqref{eq:neqsys} under Assumption \ref{ass:neqsysass} and \ref{ass:neqsysass2} by proposed {\textbf{random}} quasi-Newton methods (Algorithm \ref{alg:BroydenSPP}, \ref{alg:BFGSSPP} and \ref{alg:SR1SPP}) with $M=8\kappa^2L_2/L$ and $\G_0=4L^2\I$, if the initial point $\z_0$ is sufficiently close to the solution $\z^*$ such that
\begin{eqnarray*}
    \frac{M\lambda_0}{\mu} \leq\frac{\ln2}{64\sqrt{2}\kappa^2},
\end{eqnarray*}
with probability $1-\delta$ for any $\delta\in(0,1)$, we have the following results.
\begin{enumerate}
\item For random Broyden family method (Algorithm \ref{alg:BroydenSPP}), we have
\begin{align*}
    \lambda_{k_0+k} \leq \left(1-\frac{1}{8n\kappa^2+1}\right)^{\frac{k(k-1)}{2}}\left(\frac{1}{2}\right)^k\left(1-\frac{1}{32\kappa^2}\right)^{k_0}\lambda_0 ~\text{for all}~k>0~\text{and}~k_0=\OM\left(n\kappa^2 \ln(n\kappa/\delta)\right).
\end{align*}

\item For random BFGS/SR1 method (Algorithm \ref{alg:BFGSSPP}, \ref{alg:SR1SPP}), we have
\begin{align*}
    \lambda_{k_0+k} \leq \left(1-\frac{1}{n+1}\right)^{\frac{k(k-1)}{2}}\left(\frac{1}{2}\right)^k\left(1-\frac{1}{32\kappa^2}\right)^{k_0}\lambda_0 ~\text{for all}~k>0~\text{and}~k_0=\OM\left(\max\{n,\kappa^2\} \ln(n\kappa/\delta)\right).
\end{align*}

\end{enumerate}
\end{theorem}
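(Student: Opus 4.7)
The plan is to reduce the high-probability statement to the in-expectation machinery already developed for the saddle-point case, and then pay the standard Markov/Ville tax for going from expectation to probability. First I would invoke Lemma \ref{lm:zallin}, which under the stated initial condition guarantees both that every iterate $\z_k$ remains in the neighborhood $\{\z : \|\z-\z^*\| \leq L/(8\kappa^2 L_2)\}$ and that $\lambda_k \leq (1-1/(32\kappa^2))^k \lambda_0$ deterministically (its proof in turn is Lipschitz plus a single-step decrease argument that does not use randomness beyond that already required for well-definedness). Inside this neighborhood, Lemmas \ref{lem:localNLS} and \ref{lm-self-concordant2} play exactly the role that Lemma \ref{lm-matrix-neq0} and Lemma \ref{lm-self-concordant} played in the saddle-point case, only with the constants $\mu^2/2 \preceq \H \preceq 4L^2 \I$ and $M=8\kappa^2 L_2/L$. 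Consequently the per-iteration bounds \eqref{6-5}--\eqref{6-5-r} of Lemma \ref{thm-lambda-general} and the one-step expectation recurrences of Lemma \ref{lm:Broydensigma-update} (and its SR1 analog for $\tau_k$) transfer verbatim.

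Next I would carry out the high-probability upgrade. The one-step recurrence $\EB[\sigma_{k+1}\mid\sigma_k]\leq (1-1/(n\kappa^2))(1+Mr_k)^2(\sigma_k+2nMr_k/(1+Mr_k))$ together with the deterministic upper bound $r_k \leq \lambda_k/\mu \leq (1-1/(32\kappa^2))^k\lambda_0/\mu$ from Lemma \ref{lm:zallin} shows that the correction terms $Mr_k$ are geometrically summable. After absorbing this summable correction into a shifted process, $\sigma_k$ (respectively $\tau_k$ for SR1) becomes a non-negative supermartingale with contraction factor $1-1/(n\kappa^2)$ (respectively $1-1/n$). Applying Markov's inequality together with a union bound over iterations, or equivalently Ville's inequality, yields a deterministic envelope that holds simultaneously for all $k$ with probability at least $1-\delta$. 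Taking this intersection of good events and padding the contraction factor from $1-1/(n\kappa^2)$ to $1-1/(n\kappa^2+1)$ absorbs the $\mathrm{poly}(n,\kappa,\log(1/\delta))$ prefactor back into the exponential rate, exactly as in the derivation of Corollary \ref{co:Broydenconvergence-random} from Theorem \ref{thm-Broyden-superlinear}.

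Finally I would plug the resulting $\sigma_k$-bound into \eqref{6-5} to translate the matrix-approximation error into a gradient-norm rate. The threshold $k_0 = \OM(n\kappa^2\ln(n\kappa/\delta))$ (respectively $\OM(\max\{n,\kappa^2\}\ln(n\kappa/\delta))$ for BFGS/SR1) is then chosen so that (i) the linear-phase shrinkage $(1-1/(32\kappa^2))^{k_0}$ has pushed $\lambda_{k_0}$ small enough that the quadratic remainder $\beta\lambda_k^2$ in \eqref{6-5} is dominated by the linear term, and (ii) $\sigma_{k_0}\leq 1/2$ with probability at least $1-\delta$, so that from iteration $k_0$ onwards $\eta_k \leq 1 + \sigma_k \leq 2$. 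Combining (i) and (ii) with the telescoping product of the one-step bounds produces the stated $\prod_{i=1}^{k}(1-1/(n\kappa^2+1))$ compounding, giving the $k(k-1)/2$ exponent in the final rate.

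The main obstacle I anticipate is the bookkeeping in the high-probability upgrade: the per-iteration recurrence carries the extra additive term $2nMr_k/(1+Mr_k)$, and naively applying Markov + union bound to the resulting non-Markovian process would either inflate $k_0$ by extra powers of $n,\kappa$ or worsen the rate beyond $1-1/(n\kappa^2+1)$. Handling this cleanly requires explicitly separating the deterministic summable correction (controlled by the linear-phase bound on $r_k$ from Lemma \ref{lm:zallin}) from the random multiplicative part (controlled by supermartingale concentration), so that the random-contraction factor $1-1/(n\kappa^2)$ is only weakened by the tiny additive slack $1/(n\kappa^2(n\kappa^2+1))$ needed for the Markov-to-uniform upgrade. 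Once this separation is in place, the remaining calculation parallels Corollary \ref{co:Broydenconvergence-random} verbatim.
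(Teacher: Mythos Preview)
Your proposal is correct and follows the same overall strategy as the paper: use Lemma~\ref{lm:zallin} to confine all iterates to the neighborhood where Lemmas~\ref{lem:localNLS} and~\ref{lm-self-concordant2} supply the local analogs of the saddle-point structural lemmas, and then run the high-probability machinery of Corollary~\ref{co:Broydenconvergence-random}.

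The paper's actual proof is considerably shorter than what you outline. Rather than re-deriving the supermartingale/Markov argument, it observes that once the iterates are trapped in $\Omega^*$, the entire analysis for the saddle-point case applies verbatim with the substituted constants $L'\!=2L$, $\mu'\!=\mu/\sqrt{2}$, hence $\kappa'\!=2\sqrt{2}\kappa$ (so $n(\kappa')^2=8n\kappa^2$, explaining the $8n\kappa^2+1$ in the rate). Corollary~\ref{co:Broydenconvergence-random} is then invoked as a black box. In particular, the ``main obstacle'' you flag --- the additive $2nMr_k/(1+Mr_k)$ term --- is already absorbed inside the proof of Theorem~\ref{thm-Broyden-superlinear} via the shifted quantity $\theta_k=\sigma_k+2n\rho_k$, and the high-probability upgrade in Corollary~\ref{co:Broydenconvergence-random} is applied directly to the ratio $\lambda_{k+1}/\lambda_k$ (whose expectation is already clean), not to $\sigma_k$. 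So your proposed separation of deterministic additive correction versus random multiplicative contraction, while workable, is unnecessary: the paper avoids the issue by working one level up.
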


\begin{table*}[!t]
\label{table:comparison}	\centering\renewcommand{\arraystretch}{1.8}
	\begin{tabular}{|c|c|c|c|}
		\hline
       \multicolumn{2}{|c|}{ Algorithms} & Upper Bound of $\lambda_{k+k_0}$ & $k_0$\\
		\hline
		\multirow{2}{*}{Broyden (Algorithm \ref{alg:BroydenSPP})}& Greedy & $\left(1-\frac{1}{8n\kappa^2}\right)^{k(k-1)/2}\left(\frac{1}{2}\right)^k\left(1-\frac{1}{32\kappa^2}\right)^{k_0}$ & $\OM\left(n\kappa^2\ln(n\kappa)\right)$ \\ 
		&Random& \!\!$\left(1-\frac{1}{8n\kappa^2+1}\right)^{k(k-1)/2}\left(\frac{1}{2}\right)^k\left(1-\frac{1}{32\kappa^2}\right)^{k_0}$\!\! & $\OM\left(n\kappa^2\ln(\frac{n\kappa}{\delta})\right)$\\
		\hline
		\multirow{2}{*}{\!\!BFGS/SR1 (Algorithm \ref{alg:BFGSSPP}/\ref{alg:SR1SPP})}\!\! & Greedy & $\left(1-\frac{1}{n}\right)^{k(k-1)/2}\left(\frac{1}{2}\right)^k\left(1-\frac{1}{32\kappa^2}\right)^{k_0}$ & $\OM\left(\max\left\{n,\kappa^2\right\}\ln(n\kappa)\right)$ \\ 
		&Random& $\left(1-\frac{1}{n+1}\right)^{k(k-1)/2}\left(\frac{1}{2}\right)^k\left(1-\frac{1}{32\kappa^2}\right)^{k_0}$ & $\OM\left(\max\left\{n,\kappa^2\right\}\ln(\frac{n\kappa}{\delta})\right)$\\
		\hline
	\end{tabular}
	\caption{We summarize the convergence behaviors of three proposed algorithms for non-linear equations system in the view of gradient norm $\|\nabla f(\z_{k+k_0})\|$ after $(k+k_0)$ iterations. The results come from Theorem \ref{thm:Broydenconvergence-Nonlinear} and Theorem \ref{co::Broydenconvergence-Nonlinear-random} and the upper bounds of random algorithms holds with high probability at least $1-\delta$.} \vskip0.3cm
\end{table*}

\begin{figure}[t]
\centering
\begin{tabular}{ccc}
\includegraphics[scale=0.33]{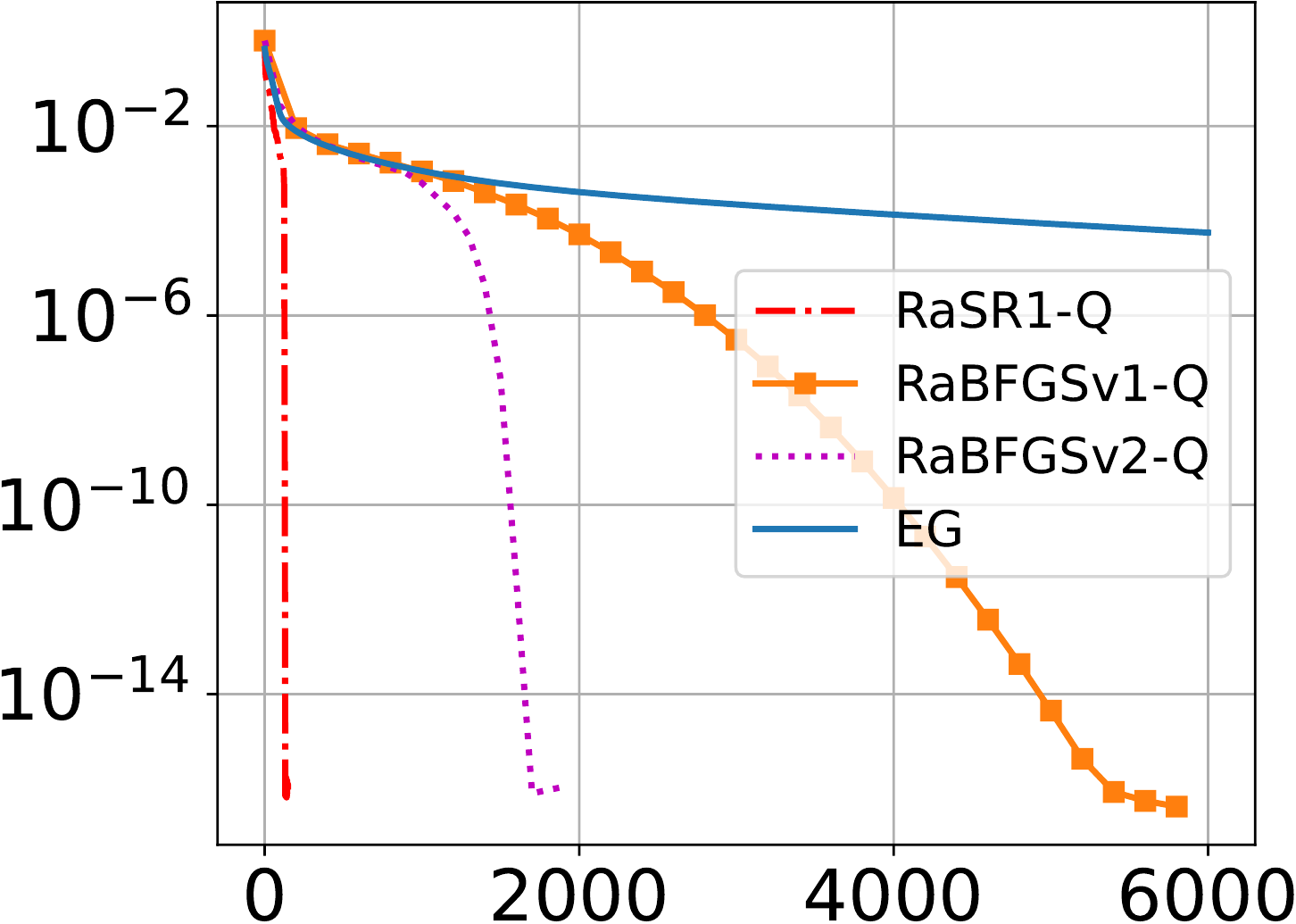} &
\includegraphics[scale=0.33]{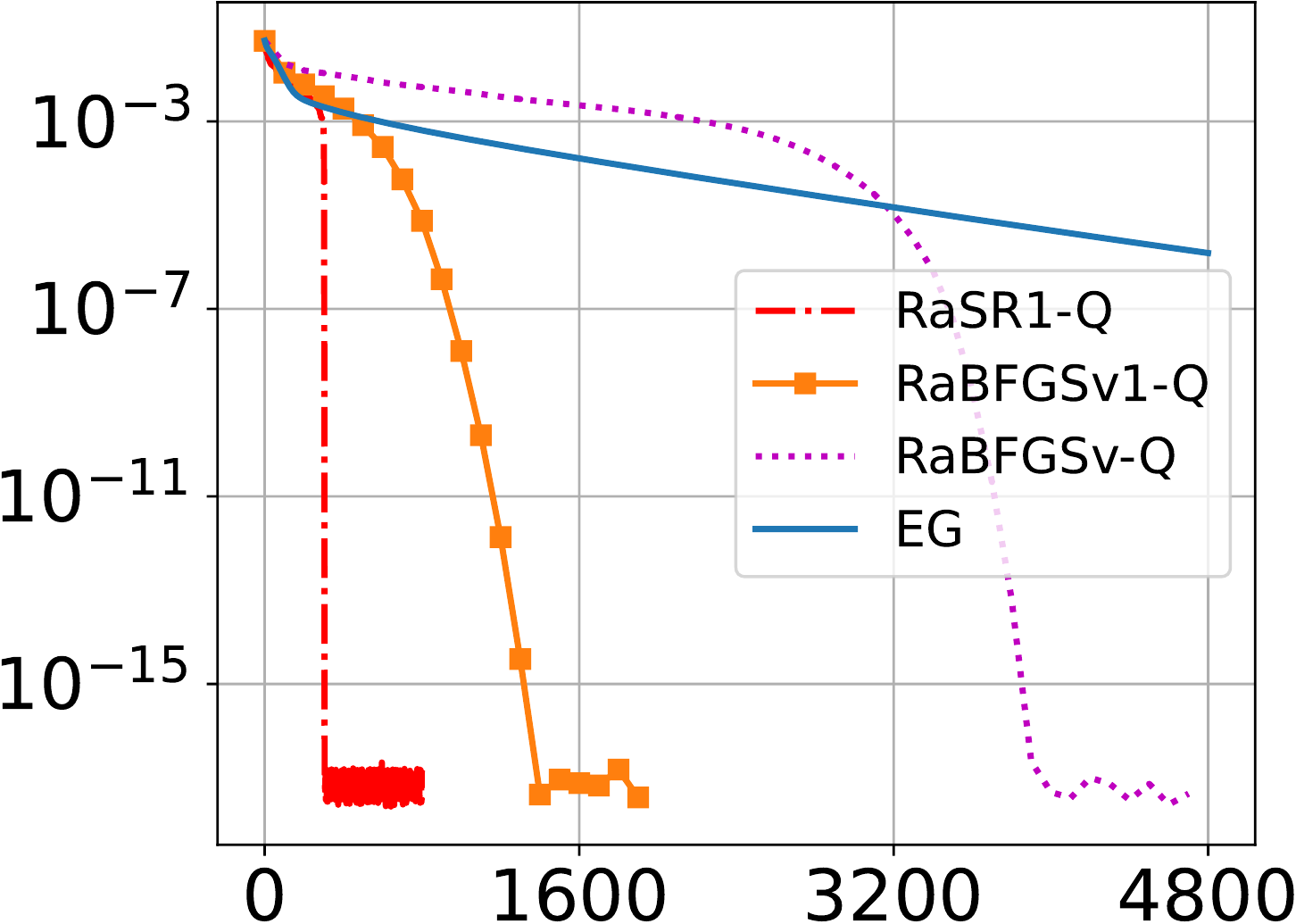} &
\includegraphics[scale=0.33]{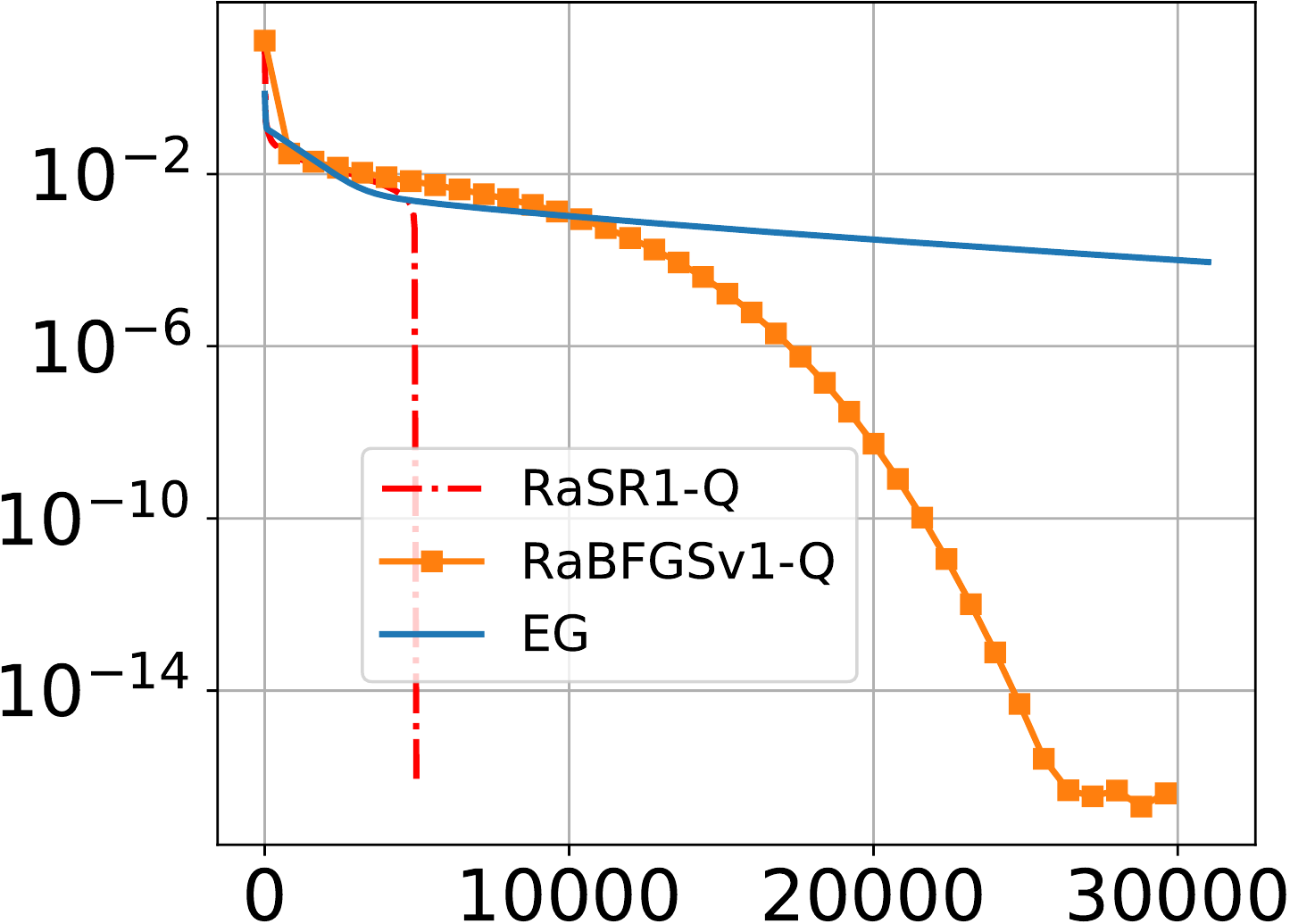}
\\[-0.1cm]
\small (a) a9a (iteration) & \small  (b) w8a (iteration) & \small  (c) sido0 (iteration) \\[0.2cm]
\includegraphics[scale=0.33]{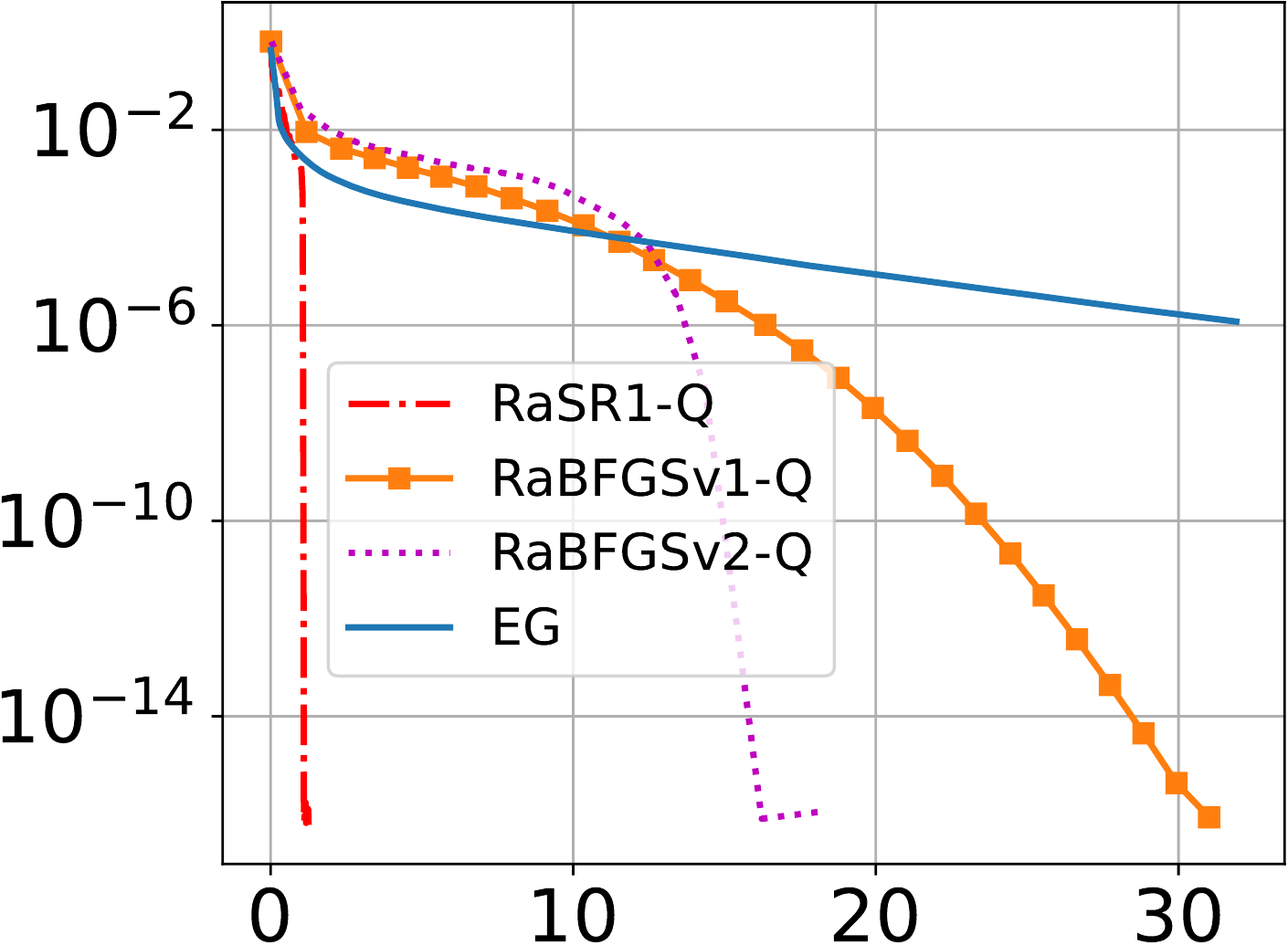} & 
\includegraphics[scale=0.33]{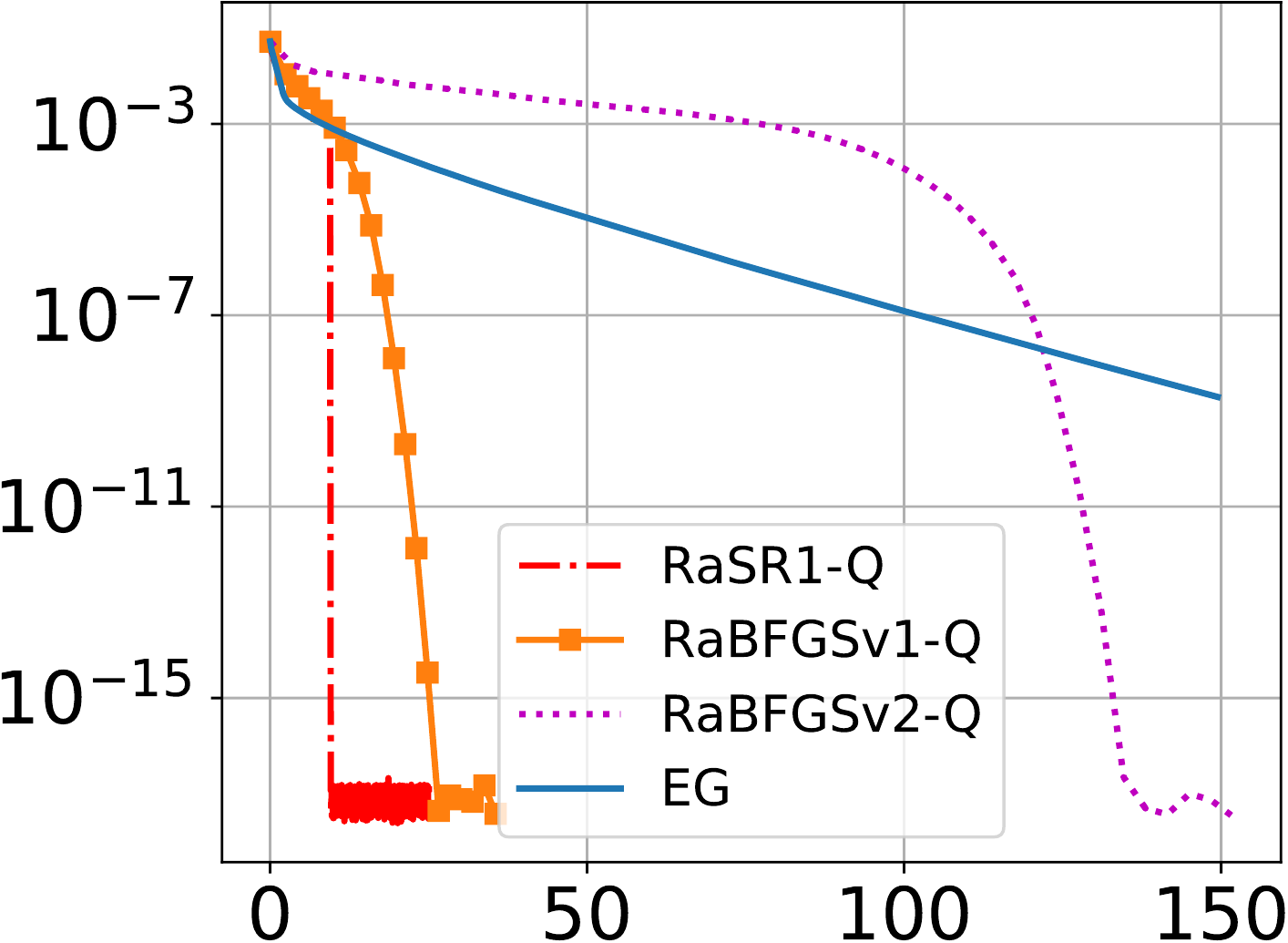} &
\includegraphics[scale=0.33]{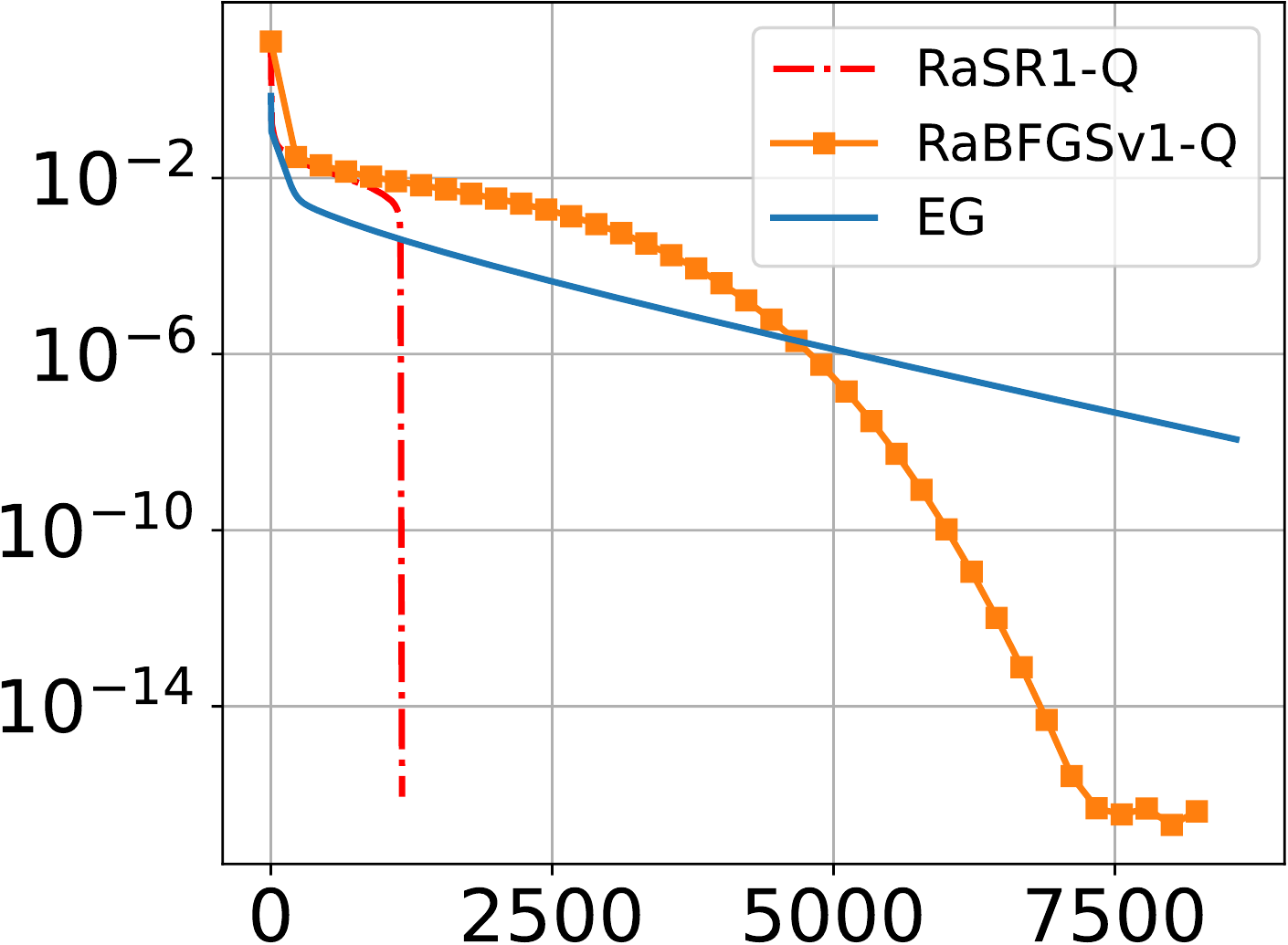} \\[-0.1cm]
\small  (d) a9a (time) & \small  (e) w8a (time) &\small  (f) sido0 (time) 
\end{tabular}\vskip-0.15cm
\caption{We demonstrate iteration numbers vs. $\|\vg(\vz)\|_2$ and CPU time (second) vs. $\|\vg(\vz)\|_2$ for AUC model on datasets ``a9a'' ($n=126$, $m=32561$), ``w8a'' ($n=303$, $m=45546$) and ``sido'' ($n=4935$, $m=12678$).}\label{fig:aucepoch}\vskip-0.3cm
\end{figure}

\paragraph{Related Work}
Very recently, \citet{lin2021explicit,ye2021greedy} showed Broyden's methods have explicit local superlinear convergence rate for solving nonlinear equations, but their assumptions are quite different from ours.
Specifically, they only suppose $\|\hH(\z)-\hH(\z^*)\|\leq L_2\Norm{\z-\z^*}$ for any $\vz\in\BR^n$, which is weaker than Assumption~\ref{ass:neqsysass} of this paper.
However, their analysis additionally requires the approximate Jacobian matrix $\G_0$ for initial point $\z_0$ must be sufficiently close to the exact Jacobian matrix $\hH(\z_0)$, which is a strong assumption and unnecessary for our algorithms.
\section{Numerical Experiments}\label{sec:exp}
In this section, we conduct the experiments on machine learning applications of AUC Maximization and adversarial debiasing to verify our theory.
We refer to Algorithm \ref{alg:BroydenQuadratic} and \ref{alg:BroydenSPP} with parameter $\tau_k=\u^{\top}\H_{k+1}/(\u^{\top}\G_k\u)$ as RaBFGSv1-Q and RaBFGSv1-G; refer to Algorithm \ref{alg:BFGSQuadratic}, \ref{alg:SR1Quadratic},  \ref{alg:BFGSSPP} and \ref{alg:SR1SPP} as RaBFGSv2-Q, RaSR1-Q, RaBFGSv2-G and RaSR1-G respectively. 
We compare these proposed algorithms with classical first-order method extragradient (EG) \cite{korpelevich1976extragradien,tseng1995linear}. 
Some detailed settings of the experiments can be found in appendix.

\subsection{AUC Maximization}
AUC maximization~\cite{hanley1982meaning,ying2016stochastic} aims to find the classifier $\w\in\RB^{d}$ on the training set $\{\va_i,b_i\}_{i=1}^m$ where $\va_i \in \RB^{d}$ and $b_i\in \{+1,-1\}$. We denote $m^{+}$ be the number of positive instances and $p=m^{+}/m$. The minimax formulation for AUC maximization can be written as 
\begin{equation*}
\min_{\vx\in\BR^{d+2}}\max_{y\in\BR} f(\vx,y) \defeq \frac{1}{m} \sum_{i=1}^m f_i(\vx,\vy;\va_i,b_i,\lambda),
\end{equation*}
where $\vx=[\vw;u;v]\in\RB^{d+2}$, $\lambda>0$ is the regularization parameter and $f_i$ is defined as
\begin{align*}
f_i(\vx,\vy;\va_i,b_i,\lambda) = & \frac{\lambda}{2}\|\vx\|_2^2-p(1-p)y^2
+p((\vw^T\va_i-v)^2+2(1+y)\vw^T\va_i)\mathbb{I}_{b_i=-1} \\
& +(1-p)((\vw^T\va_i-u)^2-2(1+y)\vw^T\va_i)\mathbb{I}_{b_i=1}.
\end{align*}
Note that the objective function of AUC maximization is quadratic, hence we conduct the algorithms in Section \ref{sec-quadratic} (Algorithm \ref{alg:BroydenQuadratic}, \ref{alg:BFGSQuadratic} and \ref{alg:SR1Quadratic}) for this model.
We set $\lambda = 100/m$ and evaluate all algorithms on three real-world datasets ``a9a'', ``w8a'' and ``sido0''. 
The dimension of the problem is $n = d + 3$.
The results of iteration numbers against $\|\vg(\vz)\|_2$ and CPU time against $\|\vg(\vz)\|_2$ are presented in Figure \ref{fig:aucepoch}.

\begin{figure}[t]
\centering
\begin{tabular}{ccc}
\includegraphics[scale=0.33]{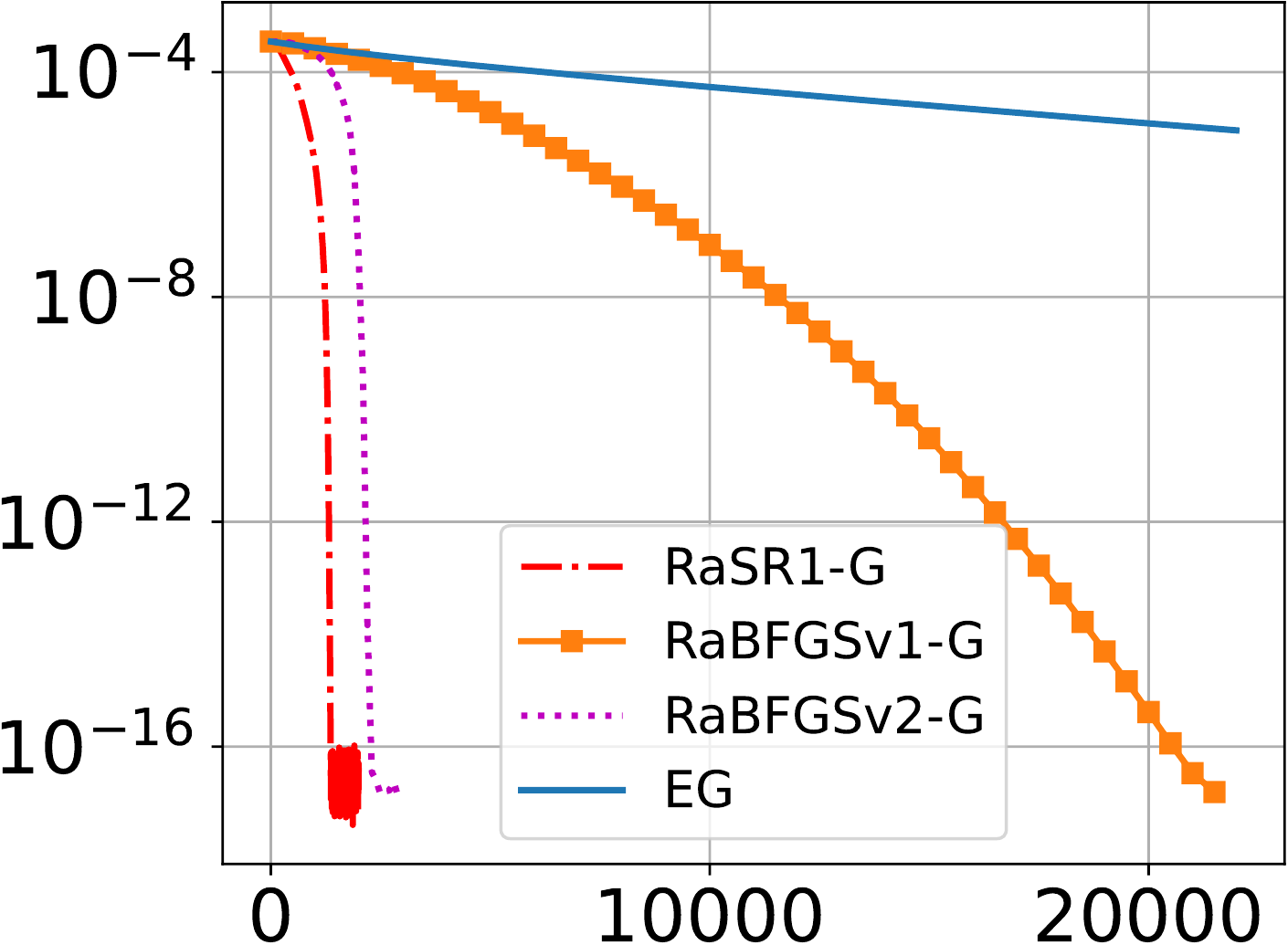} &
\includegraphics[scale=0.33]{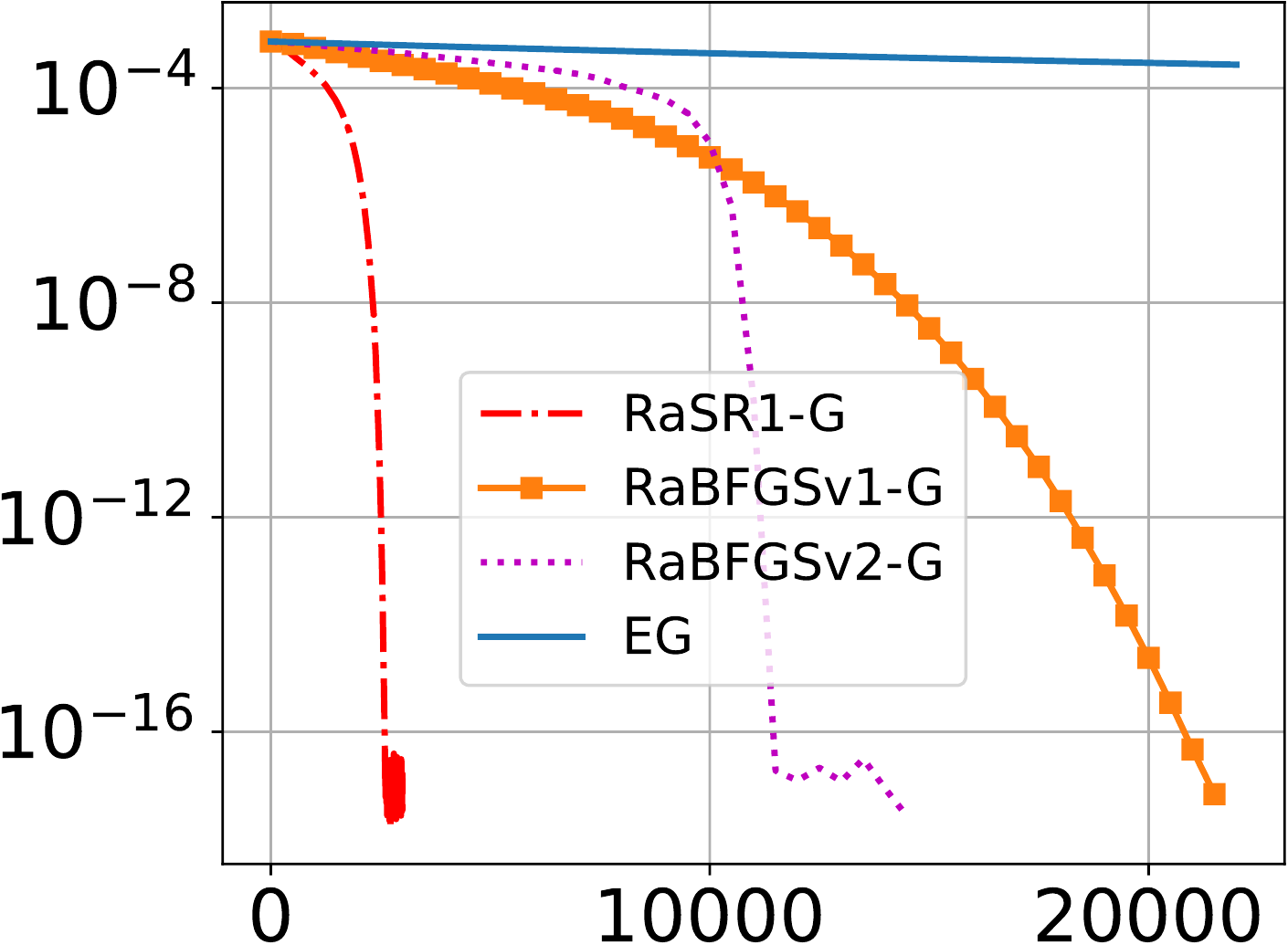} & 
\includegraphics[scale=0.33]{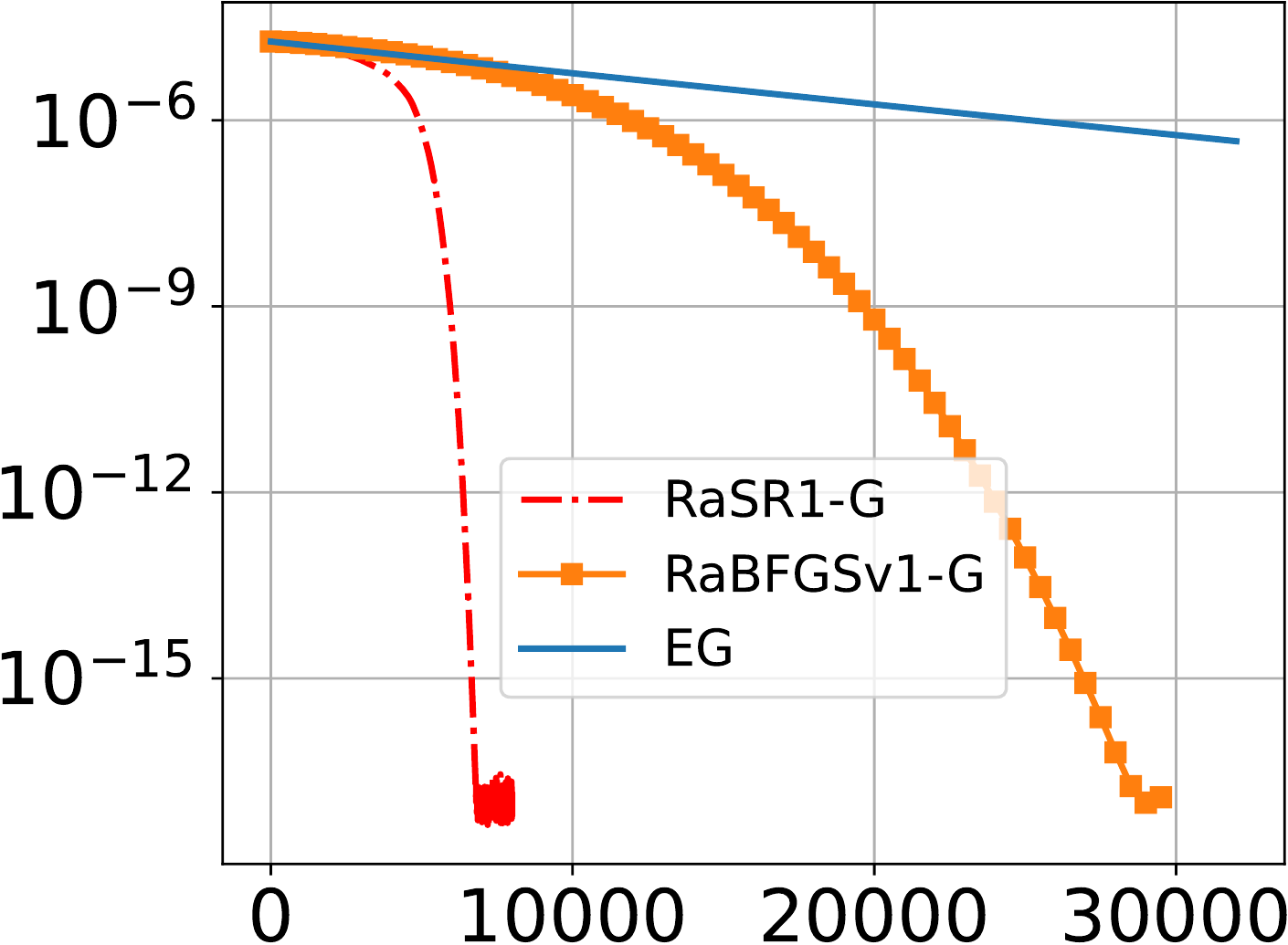} \\[-0.1cm]
\small  (a) adults (iteration) &\small  (b) law school (iteration) & \small  (c) bank market (iteration) \\[0.2cm]
\includegraphics[scale=0.33]{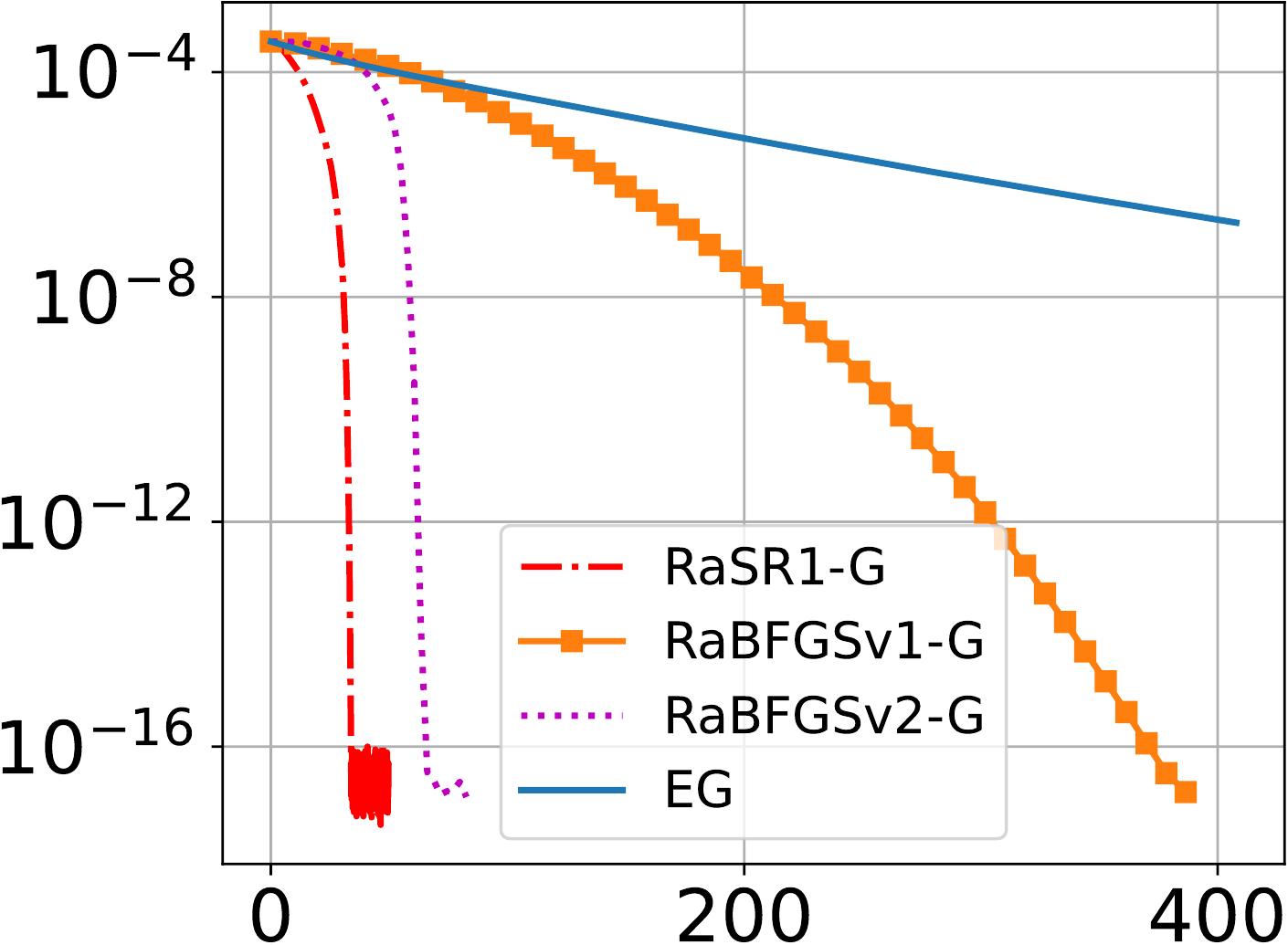} &
\includegraphics[scale=0.33]{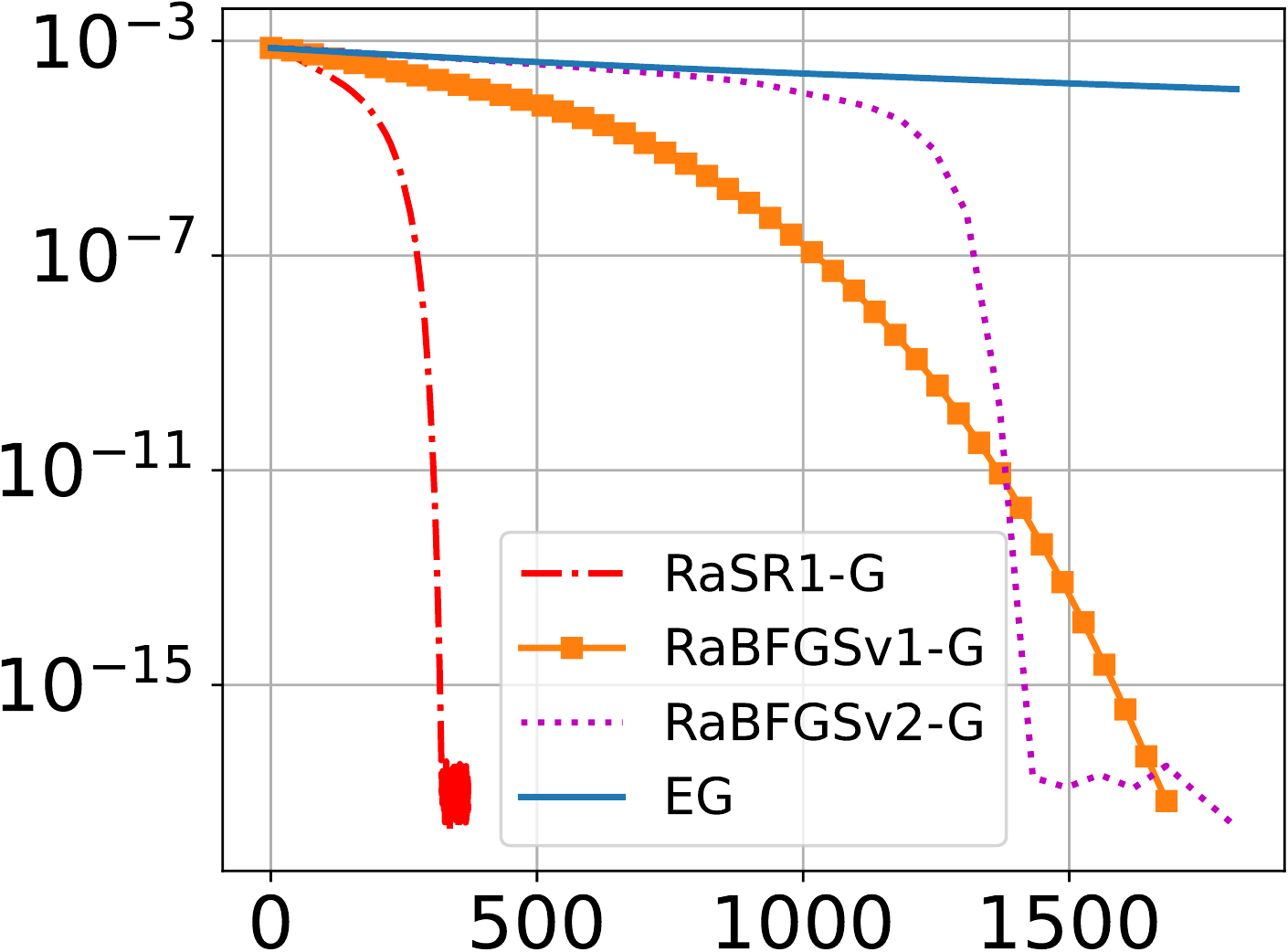}&
\includegraphics[scale=0.33]{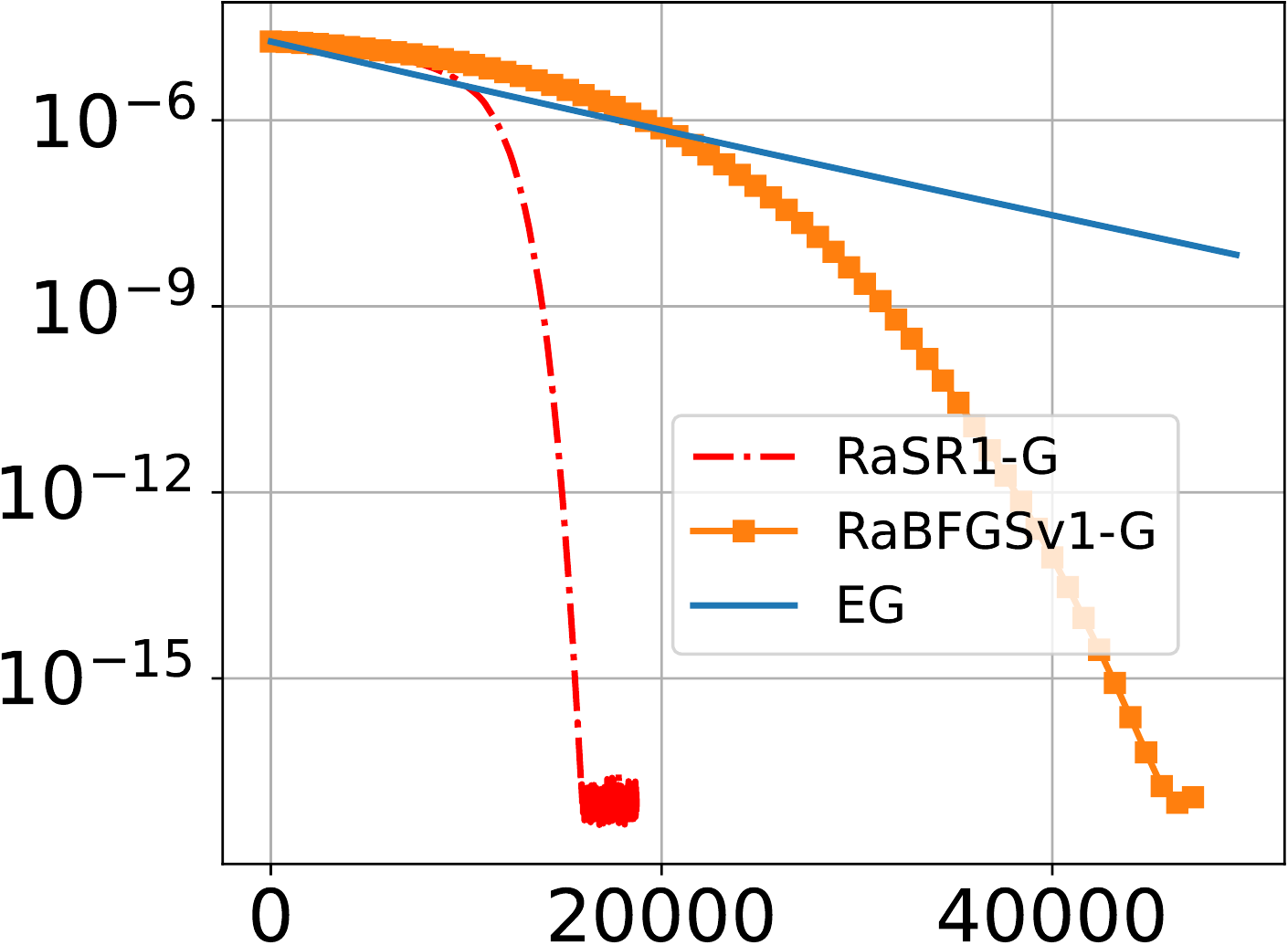}
\\[-0.1cm]
\small  (d) adults (time) & (e) law school (iteration) & \small  (f) bank market (time) 
\end{tabular}\vskip-0.15cm
\caption{We demonstrate iteration numbers vs. $\|\vg(\vz)\|_2$ and CPU time (second) vs. $\|\vg(\vz)\|_2$ for adversarial debiasing model on datasets ``adults'' ($n=123$, $m=32561$), ``law school'' ($n=380$, $m=20427$) and ``bank market'' ($n=3880$, $m=45211$).}\label{fig:fairepoch}\vskip-0.15cm
\end{figure}


\subsection{Adversarial Debiasing}
Adversarial learning \cite{lowd2005adversarial,zhang2018mitigating} can be used on fairness-aware machine learning issues. 
Give the training set $\{\va_i,b_i,c_i\}_{i=1}^m$ where $\va_i\in\RB^{d}$ contains all input variables, $b_i\in \RB$ is the output and $c_i\in \RB$ is the input variable which we want to protect and make it unbiased.
Our experiments are based on the fairness-aware binary classification dataset ``adult'', ``bank market'' and ``law school''\cite{quy2021survey}, leading to $b_i,c_i\in\{+1,-1\}$.
The model has the following minimax formulation
\begin{align*}
\mbox{\small
$\displaystyle{\min_{\x\in\RB^{d}}\max_{y\in \RB} \frac{1}{m}\sum_{i=1}^{m}(l_1(\va_i,b_i,\x)\!-\!\beta l_2(\va_i^\top\x,c_i,y))\!+\!\lambda\|\x\|^2\!-\!\gamma y^2}$}
\end{align*}
where $l_1$, $l_2$ are the logit functions: ${\rm logit}(\va,b,\vc)= \log(1+\exp(-b\va^\top\vc))$. We set the parameters $\beta,\lambda$ and $\gamma$ as $0.5, 10^{-4}$ and $10^{-4}$ respectively. The dimension of the problem is $n=d+1$.
Since the objective function is non-quadratic, we conduct the proposed algorithms in Section \ref{sec-general} (Algorithm \ref{alg:BroydenSPP}, \ref{alg:BFGSSPP} and \ref{alg:SR1SPP}) here.
We use extragradient as warm up to achieve the local condition for proposed algorithms. The results of iteration numbers against $\|\vg(\vz)\|_2$ and CPU time against $\|\vg(\vz)\|_2$ are presented in Figure \ref{fig:fairepoch}.


\section{Conclusion}\label{sec:conclusion}
In this work, we have proposed quasi-Newton methods for solving strongly-convex-strongly-concave saddle point problems. 
We have characterized the second-order information by approximating the square of Hessian matrix, which avoid the issue of dealing with the indefinite Hessian directly.
We have presented the explicit local superlinear convergence rates for Broyden's family update and a faster convergence rates for two specific methods: SR1 and BFGS updates. 
Moreover, we have also extend our theory to solve the general nonlinear equation systems and provided the similar convergence results for the algorithms.


\bibliographystyle{plainnat}
\bibliography{ref}

\pagebreak 
\appendix

\section{Efficient Implementation for Algorithm \ref{alg:L update}}
\label{faster}

For the self-completeness of this paper, we present Proposition 15 of \citet{lin2021faster} to show Algorithm \ref{alg:L update} can be implemented with $\fO\left(n^2\right)$ flops.

\begin{lemma}[{\cite[Proposition 15]{lin2021faster}}]
\label{lemma:l}
    In this Lemma, we show how to construct upper triangle matrix $\hat\LL$ from $\LL$, $\H$ and the greedy direction $\u$ with $\OM(n^2)$ flops.
	From the inverse BFGS update rule of $\A=\G^{-1}$, we have
	\begin{equation}\label{eq:bfgs-inv-update}
	    \A_+=\left({\rm BFGS}\left(\G,\H,\u\right)\right)^{-1}=\left(\I-\dfrac{\u\u^\top \H}{\u^\top \H\u}\right) \A \left(\I-\dfrac{\H \u \u^\top}{\u^\top \H \u}\right)+\dfrac{\u\u^\top}{\u^\top \H \u}.
	\end{equation}
	Suppose we already have $\A=\LL^\top \LL$ where $\LL$ is an upper triangular matrix, now we construct $\hat{\LL}$ such that $\A_+=\LL^\top\LL$ with $\OM(n^2)$ flops.
	\begin{enumerate}
	    \item First we can obtain $QR$ decomposition of 
	    \[ \LL\left(\I-\dfrac{\H\u\u^\top}{\u^\top \H\u}\right)=\LL-\dfrac{\LL(\H\u)}{\u^\top \H\u}\u^\top \]
	    with $\OM(n^2)$ flops since it is a rank-one changes of $\LL$.
	    \item Second, we have $\LL\left(\I-\dfrac{\H\u\u^\top}{\u^\top \H\u}\right)=\Q \R$, with an orthogonal matrix $\Q\in\RB^{n\times n}$ and an upper triangular matrix $\R\in\RB^{n\times n}$. We denote $\v=\frac{\u}{\sqrt{\u^\top \H \u}}$, then we can view
	    \begin{equation*}
	        \A_{+} = \R^\top \Q^\top \Q \R + \dfrac{\u\u^\top}{\u^\top \H \u} = 
	        \R^\top \R +\v\v^\top = \begin{bmatrix} \v & \R^\top  \end{bmatrix}\begin{bmatrix} \v^\top \\ \R \end{bmatrix}.
	    \end{equation*}
	     we still can obtain $QR$ decomposition of $\begin{bmatrix} \v^\top \\ \R\end{bmatrix}$ with only $\OM(n^2)$ flops, leading to
	    $\begin{bmatrix} \v^\top \\ \R\end{bmatrix} = \Q'\R'$, with an column orthogonal matrix $\Q'\in\RB^{(n+1) \times n}$ and an upper triangular matrix $\R'\in\RB^{n\times n}$, 
	    and 
	    \[ \A_+ = \R'^\top \R'. \] 
	    Thus $\R'$ satisfies our requirements.
	\end{enumerate}
\end{lemma}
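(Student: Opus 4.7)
The plan is to verify two things: first, that the matrix $\hat\LL$ produced by the stated two-step construction actually satisfies $\A_+=\hat\LL^\top\hat\LL$; and second, that each of the two QR-based steps can be executed in $\OM(n^2)$ flops rather than the naive $\OM(n^3)$. The starting algebraic identity is obtained by substituting $\A=\LL^\top\LL$ into the inverse BFGS formula \eqref{eq:bfgs-inv-update} and collecting the two symmetric pieces:
\begin{equation*}
    \A_+ = \Bigl[\LL\bigl(\I-\tfrac{\H\u\u^\top}{\u^\top\H\u}\bigr)\Bigr]^{\!\top}\Bigl[\LL\bigl(\I-\tfrac{\H\u\u^\top}{\u^\top\H\u}\bigr)\Bigr] + \v\v^\top,\qquad \v=\tfrac{\u}{\sqrt{\u^\top\H\u}}.
\end{equation*}

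For Step 1, I would argue that the $n\times n$ matrix $\LL(\I-\H\u\u^\top/(\u^\top\H\u))=\LL-(\LL\H\u)\u^\top/(\u^\top\H\u)$ is a rank-one perturbation of the upper triangular $\LL$. Its QR factorization $\Q\R$ can be computed by the classical rank-one QR update (e.g.\ Golub and Van Loan, \S12.5.1): first reduce the perturbed matrix to upper Hessenberg form by $n-1$ Givens rotations applied from the right against $\LL$ (each modifying only $\OM(n)$ entries), then restore upper triangularity with another $n-1$ Givens rotations applied from the left. Both phases cost $\OM(n^2)$ flops in total, and $\R$ is the upper triangular factor we keep; the orthogonal factor $\Q$ need never be formed explicitly because it disappears in the product $\R^\top\Q^\top\Q\R=\R^\top\R$.

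For Step 2, the key algebraic rewriting used in the algorithm is
\begin{equation*}
    \R^\top\R+\v\v^\top=\begin{bmatrix}\v & \R^\top\end{bmatrix}\begin{bmatrix}\v^\top \\ \R\end{bmatrix},
\end{equation*}
so it suffices to QR-factor the $(n+1)\times n$ matrix $\bigl[\begin{smallmatrix}\v^\top\\ \R\end{smallmatrix}\bigr]$, which is an upper triangular matrix with a single extra row glued on top. A sweep of $n$ Givens rotations, each chasing a single nonzero entry of the prepended row $\v^\top$ into the $\R$ block, annihilates all entries below the diagonal; each rotation touches only two rows and thus costs $\OM(n)$, for a total of $\OM(n^2)$. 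The result is $\begin{bmatrix}\v^\top\\ \R\end{bmatrix}=\Q'\R'$ with $\Q'\in\RB^{(n+1)\times n}$ column-orthogonal and $\R'\in\RB^{n\times n}$ upper triangular, so
\begin{equation*}
    \A_+=\R'^\top(\Q')^\top\Q'\R'=\R'^\top\R',
\end{equation*}
and setting $\hat\LL=\R'$ verifies the required factorization.

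The only delicate point — and the place I would be most careful in a fully written proof — is the flop accounting for the rank-one QR updates. The naive view in which one writes down the $\Q$ factors as dense matrices and multiplies them gives $\OM(n^3)$, so the argument must explicitly invoke the Hessenberg-reduction-plus-chase structure (or equivalently a Givens-based rank-one updating scheme) to keep each of the $\OM(n)$ rotations at $\OM(n)$ cost. Once that is in place, the correctness of the two steps follows by routine verification of the identities above, and the overall $\OM(n^2)$ bound for constructing $\hat\LL$ from $\LL,\H,\u$ is established.
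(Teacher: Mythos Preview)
Your proposal is correct and follows exactly the approach in the paper: the lemma statement is itself the constructive proof, and you have faithfully expanded the two steps (rank-one QR update of $\LL$, then QR of the row-augmented triangular matrix) together with the algebraic identity $\A_+=\R^\top\R+\v\v^\top$. Your explicit Givens-rotation justification for the $\OM(n^2)$ flop counts is precisely what the paper delegates to the citation \cite[Section 12.5.1]{golub1996matrix}, so there is no substantive difference.
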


\section {A Useful Lemma for Convergence Analysis and Proof of Lemma \ref{lm-matrix-neq0}}

This section first provides a useful lemma of symmetric positive definite matrices for our further analysis.

\begin{lemma}\label{lm-matrix-neq}
Suppose $\H, \G\in\BR^{n\times n}$ are symmetric positive definite matrices, $\hH\in \BR^{n\times n}$ are symmetric non-degenerate matrix where $\H=(\hH)^2$ and
\begin{equation*}
 \H\preceq\G\preceq\eta \H ,  
\end{equation*} where $\eta >1$.
Then we have
\begin{eqnarray}
\label{eq32}
    \|\I-\hH\G^{-1}\hH\|\leq 1-\frac{1}{\eta}.
\end{eqnarray}
\end{lemma}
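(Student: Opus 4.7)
The plan is to reduce the operator-norm bound to a two-sided L\"owner ordering on the symmetric matrix $\mA \defeq \hH\G^{-1}\hH$. Observe that $\mA$ is symmetric since $\hH$ and $\G^{-1}$ are. The bound $\|\I-\mA\|\le 1-1/\eta$ is equivalent to the sandwich
\begin{equation*}
    \tfrac{1}{\eta}\I \preceq \mA \preceq \big(2-\tfrac{1}{\eta}\big)\I,
\end{equation*}
so it suffices to establish these two inequalities.

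First I would invert the hypothesis $\H\preceq\G\preceq\eta\H$ to obtain $\tfrac{1}{\eta}\H^{-1}\preceq\G^{-1}\preceq\H^{-1}$. Then I would conjugate by the symmetric matrix $\hH$, using the standard fact that for symmetric $\mM$ and $\mB\preceq\mC$ one has $\mM\mB\mM\preceq\mM\mC\mM$ (by evaluating the quadratic form on $\mM\vv$). Because $\hH$ and $\H^{-1}=\hH^{-2}$ commute (they are polynomials/inverses of the same matrix), we have
\begin{equation*}
    \hH\H^{-1}\hH = \hH\,\hH^{-2}\,\hH = \I.
\end{equation*}
Conjugating the sandwich for $\G^{-1}$ by $\hH$ therefore gives $\tfrac{1}{\eta}\I \preceq \mA \preceq \I$.

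In particular $\I-\mA$ is symmetric positive semidefinite with spectrum contained in $[0,1-1/\eta]$, so its operator norm equals its largest eigenvalue and is at most $1-1/\eta$, which is exactly \eqref{eq32}.

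There is no substantial obstacle here: the only point requiring a moment of care is that $\hH$ is allowed to be indefinite, so one cannot use a positive square-root argument directly. Commutativity of $\hH$ with $\hH^{-2}$ side-steps this and is what makes the identity $\hH\H^{-1}\hH=\I$ hold verbatim.
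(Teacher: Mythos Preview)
Your proof is correct and follows essentially the same approach as the paper: invert the sandwich $\H\preceq\G\preceq\eta\H$ to obtain $\tfrac{1}{\eta}\H^{-1}\preceq\G^{-1}\preceq\H^{-1}$, conjugate by $\hH$ using $\hH\H^{-1}\hH=\I$, and read off the operator-norm bound from the resulting spectrum of $\I-\hH\G^{-1}\hH$. The paper phrases the last step as bounding $\hH(\H^{-1}-\G^{-1})\hH$ directly, but this is the same computation.
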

\begin{proof}
We have the following inequality for $\G^{-1}$ and $\H^{-1}$
\begin{equation}
    \frac{1}{\eta}\H^{-1}\preceq\G^{-1}\preceq\H^{-1},
\end{equation}
which means
\begin{equation}
    \0\preceq\H^{-1}-\G^{-1}\preceq\left(1-\frac{1}{\eta}\right)\H^{-1}.
\end{equation}
Thus we have
\begin{equation}
    \0\preceq \hH(\H^{-1}-\G^{-1})\hH\preceq\left(1-\frac{1}{\eta}\right)\hH\H^{-1}\hH\preceq\left(1-\frac{1}{\eta}\right)\I.
\end{equation}
So we have
\begin{equation}
    \|\I-\hH\G^{-1}\hH\|=\|\hH(\H^{-1}-\G^{-1})\hH\|\leq\left(1-\frac{1}{\eta}\right).
\end{equation}
\end{proof}

\subsection{The Proof of Lemma \ref{lm-matrix-neq0}}
\begin{proof}
We partition $\hH(\vz)\in\BR^{n\times n}$ as
\begin{equation*}
    \hH(\vz)=\begin{bmatrix}
\hH_{\vx\vx}(\vz) & \hH_{\vx\vy}(\vz)   \\
\hH_{\vy\vx}(\vz)^{\top} & \hH_{\vy\vy}(\vz)
\end{bmatrix}\in\RB^{n\times n}
\end{equation*}
where the sub-matrices $\hH_{\x\x}(\vz)\in\BR^{n_x\times n_x}$, $\hH_{\x\y}(\vz)\in\BR^{n_x\times n_y}$, $\hH_{\y\x}(\vz)\in\BR^{n_y\times n_x}$ and $\hH_{\y\y}(\vz)\in\BR^{n_y\times n_y}$ satisfy $\hH_{\x\x}(\vz)\succeq \mu\I$, $\hH_{\y\y}(\vz) \preceq -\mu\I$ and $\|\hH(\vz)\|\leq L$ for all $\vz\in\BR^n$ by Assumption \ref{assbasic} and \ref{assbasic2}.
We denote
\begin{equation*}
\J=\begin{bmatrix}
\I_{n_x} & \0   \\
\0 & -\I_{n_y}
\end{bmatrix}  \in \BR^{n\times n} ,
\end{equation*}
where $\I_{n_x}$ and $\I_{n_y}$ are $n_x\times n_x$ identity matrix and $n_y\times n_y$ identity matrix respectively.
We can verified that $\J^{\top}\J=\I$ and
\begin{equation*}
    \J\hH(\vz)=\begin{bmatrix}
    \hat{\H}_{\x\x}(\vz) & \hat{\H}_{\x\y}(\vz)\\
    -\hH_{\x\y}(\vz) & -\hH_{\y\y}(\vz)
    \end{bmatrix}.
\end{equation*}
Thus we have 
\begin{align*}
\H(\vz)=\hH(\vz)^\top\hH(\vz)=\hH^\top(\vz)(\J^{\top}\J)\hH(\vz)=(\J\hH)^{\top}(\J\hH).
\end{align*}
Following Lemma 4.5 of \citet{abernethy2019lastiterate}, we have 
\begin{equation*}
    \H(\vz) \succeq \mu^2 \I \succ \0.
\end{equation*}
Since $\big\|\hH\big\|\leq L$ and $\H\succ\0$, we have
\begin{equation*}
    \Norm{\H(\z)} = \big\|\hH(\z)^2\big\| 
\leq \big\|\hH(\z)\big\|^2 \leq L^2.
\end{equation*}
and
\begin{equation*}
    \H(\z) \preceq L^2 \I.
\end{equation*}
 \end{proof}

\section{The Proofs for \ref{sec-quadratic}}

This section provides the proofs of the results in Section \ref{sec:Broyden update} and \ref{sec-quadratic}.

\subsection{The Proof of Lemma \ref{lmquadraticlamb}}
\begin{proof}
We have $\g_k=\A\z_k-b$ and $\g_{k+1}=\A\z_{k+1}-b$ where $\A=\hH$, which means
\begin{equation}
    \g_{k+1}-\g_k=\A(\z_{k+1}-\z_{k})=-\hH\G_k^{-1}\hH\g_k.
\end{equation}
So we have
\begin{equation}
   \lambda_{k+1}=\|(\I-\hH\G_{k}^{-1}\hH)\g_k\|\leq\|\I-\hH\G_k^{-1}\hH\|\lambda_k.
\end{equation}
Since $\H\preceq\G_k\preceq \eta_k\H$ and $\H=(\hH)^2$, According to Lemma \ref{lm-matrix-neq}, we have $\|\I-\hH\G_k^{-1}\hH\|\leq\left(1-\frac{1}{\eta_k}\right)$, which means
\begin{equation}
    \lambda_{k+1}\leq \left(1-\frac{1}{\eta_k}\right)\lambda_k.
\end{equation}
\end{proof}

\subsection{The Proof of Theorem \ref{thm-quadratic-lambda}}
\begin{proof}
Lemma \ref{lm-matrix-neq0} means $\mu^2\I\preceq\H\preceq L^2\I$.
Combining with $\G_0=L^2\I$, we have
\begin{equation*}
    \H\preceq\G_0\preceq\kappa^2\H.
\end{equation*}
According to Lemma~\ref{lmBroydupdate}, we achieve \eqref{eqquadraticmatrix1}. Applying Lemma \ref{lmquadraticlamb}, we obtain
\begin{equation}
\label{eq-qua-lambda_0}
    \lambda_{k+1}\leq \left(1-\frac{1}{\kappa^2}\right)\lambda_k.
\end{equation}
for all $k\geq 0$ which leads to \eqref{eqlambda1}
\end{proof}

\subsection{The Proof of Theorem \ref{thm:BroydenQuadratic}}
\begin{proof}
The proof is modified from \cite[Theorem 16]{lin2021faster}.
Denote $\eta_k=\|\H^{-1/2}\G_k\H^{-1/2}\|\geq1$, 
then $\H\preceq\G_k\preceq\eta_k\H$
and we have
\begin{eqnarray}
\label{eq:etasigmaneq}
1-\frac{1}{\eta_k}\leq\eta_k-1\leq \sigma_{\H}(\G_k)\leq \frac{{\rm tr}(\G_k-\H)}{\mu^2}=\frac{\tau_{\H}(\G_k)}{\mu^2}.
\end{eqnarray}
\textbf{BFGS method:}
According to Lemma \ref{thmBFGSupdate}, for both random and greedy method we have:
\begin{equation*}
   \EBP{\sigma_{\H}(\G_{k+1})}\leq\left(1-\frac{1}{n}\right)\EBP{\sigma_{\H}(\G_{k})}.
\end{equation*}
which implies
\begin{equation*}
      \EBP{\sigma_{\H}(\G_k)} \leq \left(1-\frac{1}{n}\right)^k\sigma_{\H}(\G_0).
\end{equation*}
Thus we have
\begin{equation}
\label{eq:expectetak}
 \EBP{(\eta_k-1)} \overset{\eqref{eq:etasigmaneq}}{\leq} \EBP{\sigma_{\H}(\G_k)}\leq\left(1-\frac{1}{n}\right)^k\sigma_{\H}(\G_0),   
\end{equation}
The upper bound of $\sigma_\H(\G_0)$ means
\begin{equation}
\label{eq:sigmaupperbound}
    \sigma_{\H}(\G_0)=\langle\H^{-1},\G_0 \rangle-n \leq \langle\H^{-1},\kappa^2\H \rangle -n =n\left(\kappa^2-1\right)\leq n\kappa^2,
\end{equation}
Combining with Lemma \ref{lmquadraticlamb}, we have
\begin{equation*}
\EBP{\frac{\lambda_{k+1}}{\lambda_{k}}}\leq \EBP{1-\frac{1}{\eta_k}} \leq \EBP{\eta_k-1} \overset{\eqref{eq:expectetak}}{\leq} \left(1-\frac{1}{n}\right)^k\sigma_{\H}(\G_0)\overset{\eqref{eq:sigmaupperbound}}{\leq}\left(1-\frac{1}{n}\right)^k n\kappa^2
\end{equation*}

\paragraph{Broyden Family Method:}
The proof for the Broyden family method is similar to the one of BFGS. According to Lemma \ref{thmbroydenupdate}, we have
\begin{equation*}
    \EBP{\sigma_{\H}(\G_{k+1})} \leq \left(1-\frac{1}{n\kappa^2}\right)\EB~\sigma_{\H}(\G_k).
\end{equation*}
which means
\begin{equation*}
     \EBP{\sigma_{\H}(\G_k)} \leq \left(1-\frac{1}{n\kappa^2}\right)^k\sigma_{\H}(\G_0).
\end{equation*}
The upper bound bound of $\sigma_{\H}(\G_0)$ is the same with the BFGS which implies
 \begin{equation*}
\EBP{\frac{\lambda_{k+1}}{\lambda_{k}} }\leq \EBP{\eta_k-1} \leq \left(1-\frac{1}{n\kappa^2}\right)^k\sigma_{\H}(\G_0)\overset{\eqref{eq:sigmaupperbound}}{\leq}\left(1-\frac{1}{n\kappa^2}\right)^k n\kappa^2
 \end{equation*}

\paragraph{SR1 Method:}
Using Theorem 12 of \citet{lin2021faster}, we have
\begin{equation*}
\EBP{\tau_{\H}(\G_k)}\leq\left(1-\frac{k}{n}\right)\tau_{\H}(\G_0).
\end{equation*}
The upper bound of $\tau_{\H}(\G_k)$ means
\begin{align*}
  \tau_{\H}(\G_0) =  {\rm tr}(\G_0-\H) \leq (\kappa^2-1) {\rm tr}(\H) \leq n\kappa^2 L^2
\end{align*}
Combining with Lemma \ref{lmquadraticlamb}, we have
\begin{equation*}
    \EBP{\frac{\lambda_{k+1}}{\lambda_k}} \leq \EBP{\eta_k-1} \overset{\eqref{eq:etasigmaneq}}{\leq} \EBP{\frac{\tau_{\H}(\G_k)}{\mu^2}}\leq \left(1-\frac{k}{n}\right)\frac{n\kappa^2L^2}{\mu^2}=\left(1-\frac{k}{n}\right) n\kappa^4
\end{equation*}
\end{proof}

\subsection{The proof of Corollary \ref{co:quadraticresult}}
\begin{proof}
The convergence behaviors of the algorithms have two stages, the first one is global linear convergence and the second one is local superlinear convergence.

\paragraph{Greedy BFGS Method:}
Let $k_0$ be the iteration number of the first stage. We hope it holds that
\begin{equation}
\label{eq:bfgsresultneq}
    \left(1-\frac{1}{n}\right)^{k_0}n\kappa^2\leq \frac{1}{2}.
\end{equation}
after $k_0$ iterations.
Clearly, we require $k_0\leq n\ln\left(2n\kappa^2\right)$.
According to inequality \eqref{eq-qua-lambda_0} in Theorem \ref{thm-quadratic-lambda}, during the first $k_0$ iterations, we have
\begin{equation*}
\lambda_k \leq \left(1-\frac{1}{\kappa^2}\right)^k\lambda_0.
\end{equation*}
After that, Theorem \ref{thm:BroydenQuadratic} means we have 
\begin{eqnarray*}
\lambda_{k+k_0}&\leq&\lambda_{k_0}\prod_{i=0}^{k-1}\left[\left(1-\frac{1}{n}\right)^{i}\frac{1}{2}\right]\\
               &=& \left(1-\frac{1}{n}\right)^{\frac{k(k-1)}{2}}\left(\frac{1}{2}\right)^k\lambda_{k_0}    \\
               &\leq&\left(1-\frac{1}{n}\right)^{\frac{k(k-1)}{2}}\left(\frac{1}{2}\right)^k \left(1-\frac{1}{\kappa^2}\right)^{k_0}\lambda_0.
\end{eqnarray*}
for all $k\geq0$.

\paragraph{Greedy Broyden Family Method:}
The proof for Broyden method is almost the same as the one of BFGS. 
We only need to replace inequality \eqref{eq:bfgsresultneq} to 
\begin{equation*}
    \left(1-\frac{1}{n\kappa^2}\right)^{k_0}n\kappa^2\leq \frac{1}{2},
\end{equation*}
and replace all the terms of $\left(1-\frac{1}{n}\right)$ to  $\left(1-\frac{1}{n\kappa^2}\right)$. The reason is Theorem \ref{thm:BroydenQuadratic} only provides a slower convergence result
\begin{align*}
    \lambda_{k+1}\leq\left(1-\frac{1}{n\kappa^2}\right)^kn\kappa^2\lambda_{k}
\end{align*}
for Broyden family update, rather than 
\begin{align*}
    \lambda_{k+1}\leq\left(1-\frac{1}{n}\right)^kn\kappa^2\lambda_{k}  
\end{align*}
for BFGS.

\paragraph{Greedy SR1 Method:}
The convergence of SR1 update also has two stages. We hope it holds that 
\begin{equation*}
    \left(1-\frac{k}{n}\right)n\kappa^4\leq \frac{1}{2},
\end{equation*}
after $k_0$ iterations. Clearly, let $k_0=\left\lceil\left(1-\dfrac{1}{2n\kappa^4}\right)n\right\rceil$ satisfies the condition.
And according to Theorem \ref{thmbroydenupdate} we have  $\lambda_{k}\leq\left(1-\frac{k}{n}\right)n\kappa^4\lambda_k$, which means that at most after n iterations, we have $\lambda=0$.

\end{proof}

\subsection{The proof of Corollary \ref{co:quadraticresult-random}}
\begin{proof}
The convergence behaviors of the random algorithms also have two stages, the first one is global linear convergence and the second one is local superlinear convergence.
We consider the random variable $\X_k=\lambda_{k+1}/\lambda_k, \forall k \geq 0 $ in the following derivation.
\paragraph{Random Broyden Family Method:}
	Note that $\mX_k \geq 0$, using Markov's inequality, we have for any $\epsilon>0$, 
	\begin{equation}\label{eq:qua_pp}
	\BP\left(\X_k \geq \frac{n\kappa^2}{\epsilon} \left(1-\frac{1}{n\kappa}\right)^{k} \right) \leq \frac{\EBP{\ \mX_k}}{\frac{n\kappa^2}{\epsilon} \left(1-\frac{1}{n\kappa^2}\right)^{k}} \stackrel{\eqref{eq-qua-g-fth-result-2}}{\leq} \epsilon.
	\end{equation}
	Choosing $\epsilon_k = \delta (1-q)q^k$ for some positive $q < 1$, then we have 
	\begin{equation*}
	\begin{aligned}
	\BP\left(\mX_k \geq \frac{n\kappa^2}{\epsilon_k} \left(1-\frac{1}{n\kappa^2}\right)^{k}, \exists  k\in\BN\right) 
	\leq&  \sum_{k=0}^\infty \BP\left(\mX_k
	\geq \frac{n\kappa^2}{\epsilon_k}\left(1-\frac{1}{n\kappa^2}\right)^{k} \right) \\
	\stackrel{(\ref{eq:qua_pp})}{\leq}&  \sum_{k=0}^\infty \epsilon_k = \sum_{k=0}^\infty \delta (1-q)q^k = \delta.
	\end{aligned}
	\end{equation*}
	Therefore, we obtain with probability $1-\delta$,
	\begin{equation*}
	\mX_k \leq \left(\frac{1-\frac{1}{n\kappa^2}}{q}\right)^{k} \cdot \frac{n\kappa^2}{(1-q)\delta}, \ \forall k\in\BN.
	\end{equation*}
	If we set $q = 1 - \frac{1}{n^2\kappa^4}$, we could obtain with probability $1-\delta$, for all $k \in \BN$,
	\begin{equation*}
	\mX_k \leq \frac{n^3\kappa^6}{\delta} \left(1+\frac{1}{n\kappa^2}\right)^{-k} = \frac{n^3\kappa^6}{\delta} \left(1-\frac{1}{n\kappa^2+1}\right)^{k}.
	\end{equation*}
	Furthermore, it holds with probability $1-\delta$ that
	\begin{equation}\label{eq:r_qua_tel}
	\frac{\lambda_{k+1}}{\lambda_k} \le \frac{2n^3\kappa^6}{\delta} \left(1-\frac{1}{n\kappa^2+1}\right)^{k}, \forall k \in \BN, 
	\end{equation}
	Telescoping from $k$ to $0$ in Eq.~\eqref{eq:r_tel}, we get
	\begin{align*}
	\lambda_{k} &= \lambda_0 \cdot \prod_{i=1}^k \frac{\lambda_i}{\lambda_{i-1}} \leq \lambda_0 \cdot \left(\frac{2n^3\kappa^6}{\delta}\right)^{k} \prod_{i=1}^k \left(1 - \frac{1}{n\kappa^2+1}\right)^{i-1} \\
	&= \left(\frac{2n^3\kappa^6}{\delta}\right)^{k}\left(1-\frac{1}{n\kappa^2+1}\right)^{k(k-1)/2} \lambda_0.
	\end{align*}

Let $k_0$ be the iteration number of the first stage. We hope it holds that
\begin{equation}
\label{eq:bfgsresultneq-random}
    \left(1-\frac{1}{n\kappa^2+1}\right)^{k_0}\frac{2n^3\kappa^6}{\delta}\leq \frac{1}{2}.
\end{equation}
after $k_0$ iterations.
Clearly, we require $k_0=\OM\left( n\kappa^2\ln\left(n\kappa\right/\sigma)\right)$.
According to inequality \eqref{eq-qua-lambda_0} in Theorem \ref{thm-quadratic-lambda}, during the first $k_0$ iterations, we have
\begin{equation*}
\lambda_k \leq \left(1-\frac{1}{\kappa^2}\right)^k\lambda_0.
\end{equation*}
After that, \eqref{eq:r_qua_tel} means that with probability $1-\delta$ we have 
\begin{eqnarray*}
\lambda_{k+k_0}&\leq&\lambda_{k_0}\prod_{i=0}^{k-1}\left[\left(1-\frac{1}{n\kappa^2+1}\right)^{i}\frac{1}{2}\right]\\
               &=& \left(1-\frac{1}{n\kappa^2+1}\right)^{\frac{k(k-1)}{2}}\left(\frac{1}{2}\right)^k\lambda_{k_0}    \\
               &\leq&\left(1-\frac{1}{n\kappa^2+1}\right)^{\frac{k(k-1)}{2}}\left(\frac{1}{2}\right)^k \left(1-\frac{1}{\kappa^2}\right)^{k_0}\lambda_0.
\end{eqnarray*}
for all $k\geq0$.

\paragraph{Random BFGS Method:}
Similar to the analysis for random Broyden family method, 
we obtain with probability $1-\delta$,
	\begin{equation*}
	\X_{k} \leq \left(\frac{1-\frac{1}{n}}{q}\right)^k \cdot \frac{n\kappa^2}{(1-q)\delta}, \forall k\in\BN.
	\end{equation*}
	If we set $q = 1 - 1 / n^2$, we could obtain with probability $1-\delta$,
	\begin{equation}
	\label{eq:lambda-qua-randomsuper}
 \lambda_{k+1} \leq  \frac{2n^3\kappa^2}{\delta}\left(1-\frac{1}{n+1}\right)^{k} \lambda_k, \text{ for all } k \in \BN.
	\end{equation}
We require the point $\vz_{k_0}$ sufficient close to the saddle point such that
\begin{equation*}
    \left(1-\frac{1}{n+1}\right)^{k_0}2n^3\kappa^2/\delta \leq \frac{1}{2},
\end{equation*}
which can be guaranteed by setting $k_0=\OM\left( n\ln(n\kappa/\delta)\right)$.

The remainder of the proof can follow the analysis in random Broyden methdos. 
We only need to replace all the term of $\left(1-\frac{1}{n\kappa^2+1}\right)$ to  $\left(1-\frac{1}{n+1}\right)$. The reason is that \eqref{eq:lambda-qua-randomsuper}  provides a faster convergence result for random BFGS update, rather than \eqref{eq:r_qua_tel}
for random Broyden case.

\paragraph{SR1 Method:}
Similar to the analysis for random Broyden family method, 
we obtain with probability $1-\delta$,
	\begin{equation*}
	X_{k} \leq \left(\frac{1-\frac{k}{n}}{q}\right)\cdot \frac{n\kappa^4}{(1-q)\delta} \qquad \text{ for all } k \in \BN.
	\end{equation*}
	If we set $q = 1 - 1 / n^2$, we could obtain with probability $1-\delta$,
	\begin{equation}
	\label{eq:lambda-qua-srq-randomsuper}
    \lambda_{k+1} \leq  \frac{n^2(n+1)\kappa^4}{\delta}\left(1-\frac{k}{n}\right)\lambda_k \qquad \text{ for all } 0\leq k\leq n .
	\end{equation}
Recall that we denote $k_0$ the first iteration such that
\begin{equation*}
    \left(1-\frac{k_0}{n}\right)\frac{n^2(n+1)\kappa^4}{\delta}\leq \frac{1}{2}.
\end{equation*}
Clearly, we can set
\begin{align*}
k_0=\left\lceil\left(1-\dfrac{\delta}{2n^2(n+1)\kappa^4}\right)n\right\rceil.
\end{align*}
Then it holds that
\begin{eqnarray*}
     \lambda_{k_0+k+1}
&\overset{\eqref{eq:lambda-qua-srq-randomsuper}}{\leq}& \frac{n^2(n+1)\kappa^4}{\delta} \left(1-\frac{k+k_0}{n}\right)\\
&=& \left(\frac{n-k-k_0}{n-k_0}\right)\left(1-\frac{k_0}{n}\right)\frac{n^2(n+1)\kappa^4}{\delta}\\
&\leq&\frac{1}{2}\left(\frac{n-k-k_0}{n-k_0}\right)\lambda_{k_0+k}.
\end{eqnarray*}

Thus for $k_0=\left\lceil\left(1-\delta/(2n^2(n+1)\kappa^4)\right)n\right\rceil$ and $0<k\leq n-k_0+1$, we have
\begin{eqnarray*}
        \lambda_{k+k_0}\leq \left(\frac{n-k+1-k_0}{n-k_0}\cdots\frac{n-k_0-1}{n-k_0}\right)\left(\frac{1}{2}\right)^k\lambda_{k_0} \leq \frac{(n-k_0-k+1)!}{(n-k_0+1)!}\left(\frac{1}{2(n-k_0)}\right)^{k}\left(1-\frac{1}{\kappa^2}\right)^{k_0}\lambda_0
\end{eqnarray*}

\end{proof}

\section{The Proofs for Section \ref{sec-general}}
This section provides the proofs of the results in Section \ref{sec-general}.

\subsection{The Proof of Lemma \ref{lm-H-continous}}
\begin{proof}
Since Assumption \ref{assbasic} and \ref{assbasic2} mean operator $\hH(\cdot)$ is $L$-Lipschitz continuous and $\|\hH(\z)\|\leq L$ for all $\z\in\BR^n$, we have
\begin{eqnarray*}
\|\H(\vz)-\H(\vz')\| &=& \|\hat{\H}(\vz)\hat{\H}(\vz)-\hat{\H}(\vz')\hat{\H}(\vz')\| \\
&\leq& \|\hat{\H}(\vz)(\hat{\H}(\vz)-\hat{\H}(\vz'))\| + \|(\hat{\H}(\vz)-\hat{\H}(\vz'))\hat{\H}(\z')\| \\
&\leq& 2\|\hat{\H}\|\cdot\|\hat{\H}(\vz)-\hat{\H}(\vz')\|\\
&\leq& 2L_2L \|\vz-\vz'\|.
\end{eqnarray*}
\end{proof}

\subsection{The Proof of Lemma \ref{lm-self-concordant}}
\begin{proof}
According to Lemma \ref{lm-H-continous}, we have
\begin{eqnarray*}
\H(\vz)-\H(\vz') &\stackrel{\eqref{eq10}}\preceq& 2L_2L \|\vz-\vz'\| \I \\
            &\stackrel{\eqref{eq6}}\preceq& \frac{2L_2L}{\mu^2} \|\vz-\vz'\| \H(\vw)\\
            &=& \frac{2\kappa^2L_2}{L}\|\vz-\vz'\| \H(\vw).
\end{eqnarray*}
\end{proof}

\subsection{The Proof of Corollary \ref{co:concordant}}
\begin{proof}
Taking interchanging of $\z'$ and $\z$ and letting $\w=\z$ in \eqref{eq-self-concordant}, we have
\begin{equation*}
    \H(\z')-\H(\z)\preceq M\|\z-\z'\|\H(\z)=Mr \|\H(\z)\|.
\end{equation*}
Then taking $\w=\z'$, we have
\begin{equation*}
    \H(\z)-\H(\z')\preceq M\|\z-\z'\|\H(\z') =Mr \|\H(\z')\|.
\end{equation*}
We finish the proof by combing above results.
\end{proof}

\subsection{The Proof of Lemma \ref{lm-G-H-neq}}
\begin{proof}
Using Corollary \ref{co:concordant}, we have 
\begin{equation*}
    \H(\z_{+})\overset{\eqref{eq-H-concordant-app}}{\preceq} (1+Mr)\H(\z) \overset{\eqref{eq-G-H-neq-0}}{\preceq}(1+Mr)\G=\tilde{\G}
\end{equation*}
and
\begin{equation*}
    \tilde{\G} = (1+Mr)\G\overset{\eqref{eq-G-H-neq-0}}{\preceq}(1+Mr)\eta\H(\z)\overset{\eqref{eq-H-concordant-app}}{\preceq}(1+Mr)^2\eta\H(\z_{+}),
\end{equation*}
which means
\begin{equation*}
    \H(\z_+)\preceq \tilde{\G}\preceq (1+Mr)^2\eta\H(\z_{+}).
\end{equation*}
Then we obtain \eqref{eq-G-H-neq} by Lemma \ref{lmBroydupdate}.
\end{proof}

\subsection{The Proof of Lemma \ref{thm-lambda-general}}
\begin{proof}

We rewrite $\nabla f(\z_{k+1})-\nabla f(\z_k)$ below
\begin{align*}
\nabla f(\z_{k+1})-\nabla f(\z_k)& = \int_{0}^1\nabla^2 f(\z_k+s(\z_{k+1}-\z_k)) (\z_{k+1}-\z_k)ds \\
&=\int_{0}^1\left[\nabla^2 f(\z_k+s(\z_{k+1}-\z_k))-\nabla^2 f(\z_k)\right] (\z_{k+1}-\z_k)ds + \nabla^2f(\z_k)(\z_{k+1}-\z_k)\\
&=\int_{0}^1\left[\nabla^2 f(\z_k+s(\z_{k+1}-\z_k))-\nabla^2 f(\z_k)\right] (\z_{k+1}-\z_k)ds -\nabla^2f(\z_k)\G_k^{-1}\nabla^2f(\z_k)\nabla f(\z_k),
\end{align*}
which means
\begin{align*}
    \g_{k+1}=\underbrace{(\I -\hH_{k}\G_k^{-1}\hH_{k})\g_k}_{\va_k}+\underbrace{\int_{0}^1\left[\hH(\z_k+s(\z_{k+1}-\z_k))-\hH(\z_k)\right] (\z_{k+1}-\z_k)ds}_{\vb_k}.
\end{align*}
We first bound the term $\|\va_k\|$ by Lemma \ref{lm-matrix-neq}
\begin{equation}
    \|\va_k\|\leq\|\I-\hH_k\G_k^{-1}\hH_k\|\|\g_k\|\leq \left(1-\frac{1}{\eta_k}\right)\lambda_k.
\end{equation}
Before we bound $\vb_k$, we first try to bound $\hH_k\G_k^{-2}\hH_k$
\begin{align*}
        \hH_k\G_k^{-2}\hH_{k}&= (\hH_k\G_k^{-1/2})\G_k^{-1}(\G_k^{-1/2}\hH_k)\preceq (\hH_k\G_k^{-1/2})\frac{1}{\mu^2}\I(\G_k^{-1/2}\hH_k)=\frac{1}{\mu^2}\hH_k\G_k^{-1}\hH_k \\
      &\preceq \frac{1}{\mu^2} \hH_k\H_k^{-1}\hH_k \preceq \frac{1}{\mu^2} \I.
\end{align*}
And $\|\vb_k\|$ can be bounded by the $L_2$-Lipschitz continuous of the object function that
\begin{align*}
    \|\vb_k\|&\leq \int_{0}^1\Norm{\left(\hH\left(\z_k+s(\z_{k+1}-\z_k)\right)-\hH(\z_k)\right)(\z_{k+1}-\z_{k})}ds\\
&\stackrel{\eqref{eq-Hessian-continous}}\leq \int_{0}^{1}\Norm{ L_2 s(\z_{k+1}-\z_{k})}ds \leq \frac{L_2}{2} \|\z_{k+1}-\z_{k}\|^2\\
&=\frac{L_2}{2}\|\G_k^{-1}\hH_k\g_k\|^2= \frac{L_2}{2}\inner{\g_k}{\hH_k\G_k^{-2}\hH_k\g_k}\leq \frac{L_2}{2\mu^2}\|\g_k\|^2=\beta\lambda_k^2.\end{align*}
Combining the above results, we have
\begin{equation}
    \lambda_{k+1}\leq \|\va_k\|+\|\vb_k\|\leq\left(1-\frac{1}{\eta_k}\right)\lambda_k+\beta\lambda_k^2
\end{equation}
The relation of $r_k$ and $\lambda_k$ can be directly prove by the update formula
\begin{equation}
    r_k=\|\z_{k+1}-\z_k\|=\|\G_k^{-1}\hH_k\g_k\|= \inner{\g_k}{\hH_k\G_k^{-2}\hH_k\g_k}^{1/2}\leq\frac{1}{\mu}\lambda_k
\end{equation}
\end{proof}

\subsection{The proof of Theorem \ref{thm:G-linear-convergence}}
\begin{proof}
We use induction to prove the following statements 
\begin{equation}\label{eq64}
    \H_k\preceq\G_k\preceq \exp\left(2\sum_{i=0}^
    {k-1}\rho_i\right)\kappa^2\H_k\preceq b\kappa^2\H_k,
\end{equation}
\begin{equation}\label{eq65}
    \lambda_k \leq\left(1-\frac{1}{2b\kappa^2}\right)^{k}\lambda_{0},
\end{equation}
\begin{equation}
\label{eq66}
    \eta_k \defeq \exp{\left(\sum_{i=0}^
    {k-1}2\rho_i\right)}\kappa^2\leq b\kappa^2
\end{equation}
hold for all $k\geq 0$. 
The initial assumption promise that $\lambda_0$ is small enough such that
\begin{equation}
    \label{eq:initialass1}
    \frac{M}{\mu}\lambda_0\leq \frac{\ln b}{4b\kappa^2} 
\end{equation}
For $k=0$, the initialization $\G_0=L^2\I$ leads to $\H_0\preceq\G_0\preceq \kappa^2\H_0$ and $\eta_0=\kappa^2$, which  satisfy \eqref{eq64}, \eqref{eq65} and \eqref{eq66}.
Then we prove these results for $k'=k+1$.

The induction assumption means $\eta_k\leq b\kappa^2$ and $\H_k\preceq \G_k\leq \eta_k\H_k$. Using Lemma \ref{thm-lambda-general}, we have
\begin{align*}
    \lambda_{k+1} \leq \left(1-\frac{1}{b\kappa^2}\right)\lambda_k + \beta \lambda_k^2 
    \overset{\eqref{eq65}}{\leq}  \left(1-\frac{1}{b\kappa^2}+\beta\lambda_0\right)\lambda_{k}
    \overset{\eqref{eq:initialass1}}{\leq} \left(1-\frac{1}{2b\kappa^2}\right)\lambda_k 
    \overset{\eqref{eq65}}{\leq}  \left(1-\frac{1}{2b\kappa^2}\right)^{k+1}\lambda_0.
\end{align*}
Recall that we've defined $\rho_i=M\lambda_i/\mu$. Based on the fact $e^{x}\geq x+1$ and Lemma \ref{lm-G-H-neq}, we have
\begin{eqnarray*}
    \H_{k+1}\preceq\G_{k+1}  &\overset{\eqref{eq-G-H-neq}}{\preceq}& (1+Mr_k)^2 \eta_k \H_{k+1}\preceq(1+M\lambda_k/\mu)^2 \eta_k \H_{k+1}\\
    &{=}& (1+\rho_k)^2\eta_k \H_{k+1} \preceq e^{2\rho_k}\eta_k \H_{k+1} \\
    &\overset{\eqref{eq64}}{\preceq}& \exp{\left(2\sum_{i=0}^{k}\rho_i\right)}\kappa^2 \H_{k+1};
\end{eqnarray*}
and the term $\sum_{i=0}^{k}\rho_i$ can be bounded by
\begin{eqnarray}
\label{eq-gamma-control}
    \sum_{i=0}^{k}\rho_i &{=}& \frac{M}{\mu}\sum_{i=0}^{k}\lambda_i \overset{\eqref{eq65}}{\leq} \frac{M}{\mu}\lambda_0\sum_{i=0}^{k}\left(1-\frac{1}{2b\kappa^2}\right)^{i} \\&\leq&\frac{2bML^2}{\mu^3}\lambda_0 \overset{\eqref{eq:initialass1}}{\leq}\frac{\ln b}{2}.\nonumber
\end{eqnarray}
Hence, for $k+1$, we have
\begin{eqnarray*}
    \G_{k+1}\preceq \exp{\left(2\sum_{i=1}^
    {k}\rho_i\right)}\kappa^2\H_{k+1}\preceq b\kappa^2\H_{k+1}
\quad\text{and}\quad
    \eta_{k+1}= \exp{\left(2\sum_{i=1}^
    {k}\rho_i\right)}\kappa^2\leq b\kappa^2.
\end{eqnarray*}
Thus we have complete the proof for statements \eqref{eq64}, \eqref{eq65} and \eqref{eq66} by induction.
\end{proof}

\subsection{The Proof of Lemma \ref{lm:Broydensigma-update}}
\begin{proof}
The proof of two algorithms are similar, the only difference is because two update formulas have different convergence rate.
We first give the proof for BFGS method.
\paragraph{BFGS Method:}
Recall that we use the update rule
\begin{equation*}
    \G_{k+1} = {\rm BFGS}\left(\tilde{\G}_k,\H_{k+1},\u_k\right).
\end{equation*}
Using Lemma \ref{thmBFGSupdate}, we have
\begin{equation*}
\EB_{\u_k}[\sigma_{k+1}] = \EB_{\u_k}[\sigma_{\H_{k+1}}(\G_{k+1})]\overset{\eqref{eqbfgsthm1}}{\leq} \left(1-\frac{1}{n}\right) \sigma_{\H_{k+1}}(\tilde{\G}_{k}).
\end{equation*}
Then we bound the term $\sigma_{\H_{k+1}}(\tilde{\G}_k)$ by $\sigma_k$ as follows
\begin{eqnarray}
\sigma_{\H_{k+1}}(\tilde{\G}_k)&\overset{\eqref{eq-def-sigma-H}}{=}&\langle \H_{k+1}^{-1},\tilde{\G}_k\rangle-n \nonumber\\
&\overset{\eqref{eq-G-update}}{=}&(1+Mr_k)\langle\H_{k+1}^{-1}, \G_k \rangle -n \nonumber\\
&\overset{\eqref{eq-H-concordant-app}}{\leq}&(1+Mr_k)^2\langle \H_k^{-1}, \G_k \rangle - n\nonumber\\
&=&(1+Mr_k)^2\sigma_k +n((1+Mr_k)^2-1)\nonumber\\
&=&(1+Mr_k)^2\sigma_k+2nMr_k\left(1+\frac{Mr_k}{2}\right)\nonumber\\
\label{last-sigma}
&\leq& (1+Mr_k)^2\left(\sigma_k + \frac{2nMr_k}{1+Mr_k}\right).
\end{eqnarray}
Thus we obtain \eqref{eq-sigma-update-2} by combining above results.
\paragraph{Broyden Family Method:}
If we use the Broyden family update
\begin{equation*}
    \G_{k+1} = {\rm Broyd}_{\tau_k}(\tilde{\G}_k,\H_{k+1},\u_k)
\end{equation*}
instead of BGFS, Lemma \ref{thmbroydenupdate} means
\begin{equation*}
    \EB_{\u_k}[\sigma_{k+1}]\overset{\eqref{eqbroythm1}}{\leq}\left(1-\frac{1}{n\kappa^2}\right)\sigma_{\H_{k+1}}(\tilde{\G}_k).
\end{equation*}
Combining with \eqref{last-sigma} which holds for both Broyden case and BFGS case, we obtain \eqref{eq-sigma-update-1}.
\end{proof}

\subsection{The Proof of Theorem \ref{thm-Broyden-superlinear}}
We give the proof for the BFGS and Broyden Family methods in Section \ref{app:saddle-thm-bfgs-broyden} and the proof for the SR1 method in Section \ref{app:saddle-thm-sr1}.
Taking $b=2$, all algorithms have $M\lambda_0/\mu \leq \frac{\ln2}{8\kappa^2}$, which implies the properties of $\lambda_k$ shown in Theorem \ref{thm:G-linear-convergence}.
\subsubsection{The Proofs of BFGS and Broyden Methods}\label{app:saddle-thm-bfgs-broyden}
\paragraph{BFGS Method:}
First we give the proof for the BFGS method.
For each $k$, we have
\begin{equation}
\label{eq-fth-1111}
    \H_{k}\preceq\G_k \overset{\eqref{eq-sigmaproperty-1}}{\preceq}(1+\sigma_k)\H_k.
\end{equation}
From Theorem \ref{thm-lambda-general}, we have
\begin{equation}
\label{eq-fth-1}
    \lambda_{k+1}\leq \left(1-\frac{1}{1+\sigma_k}\right)\lambda_k+\beta\lambda_k^{2},
\end{equation}
and
\begin{equation}
\label{eq:rk-lambdak}
    r_k \leq \lambda_k/\mu.
\end{equation}
From Lemma \ref{lm:Broydensigma-update}, we obtain
\begin{equation}
\label{eq-fth-2}
    \EB_{\u_k}[\sigma_{k+1}] \leq \left(1-\frac{1}{n}\right)(1+Mr_k)^2\left(\sigma_k + \frac{2nMr_k}{1+M r_k}\right).
\end{equation}
Since we have defined $\rho_k=M\lambda_k/\mu$ and the constant $\beta,~M$ satisfy $\frac{\beta}{M}<\frac{1}{4}$, it holds that
\begin{eqnarray}
\label{eq-fth-3}
\rho_k \overset{\eqref{eq-fth-1}}{\leq} \frac{\sigma_k}{1+\sigma_k}\rho_k +\frac{\beta}{2M}\rho_k^2 \leq \sigma_k\rho_k +\frac{\beta}{2M}\rho_k^2<\sigma_k\rho_k+\frac{1}{8}\rho_k^2,
\end{eqnarray}
and
\begin{eqnarray}
\label{eq-fth-4}
\EBP{\sigma_{k+1}}&\overset{\eqref{eq-fth-2},\eqref{eq:rk-lambdak}}{\leq}&\left(1-\frac{1}{n}\right)\EBP{(1+\rho_k)^2\left(\sigma_k+\frac{2n\rho_k}{1+\rho_k}\right)}.
\end{eqnarray}
We set 
\begin{equation*}
    \theta_k \defeq \sigma_k+2n\rho_k
    \end{equation*}
and consider Theorem \ref{thm:G-linear-convergence} with $b=2$. 
Then the convergence result of \eqref{eq-gamma-control} and the initial assumption of $\z_0$ implies that
\begin{eqnarray}
\label{eq:rhokbound}
        \rho_k \leq \left(1-\frac{1}{4\kappa^2}\right)^k\rho_0,
\end{eqnarray}
and 
\begin{eqnarray}
\label{eq:initialrho1}
        \rho_0 \leq \frac{\ln2}{8\kappa^2(1+2n)}
\end{eqnarray}
We now use induction to show that
\begin{equation}
\label{eq:induction}
    \EBP{\theta_k}\leq \left(1-\frac{1}{n}\right)^{k}2n\kappa^2
\end{equation}
In the case of $k=0$, we have
\begin{eqnarray}
\label{eq:initialboundtheta0}
    \sigma_0+2n\rho_0 
    \overset{\eqref{eq-def-sigma-H}}{=} \langle\H_0^{-1},\G_0\rangle-n +2n\rho_0
    \leq\langle\H_0^{-1},\kappa^2\H_0\rangle-n+2n\rho_0
    =n\left(\kappa^2-1\right)+2n\rho_0\leq n\kappa^2.
\end{eqnarray}
Thus for $k=0$, \eqref{eq:induction} is satisfied.

Suppose inequality \eqref{eq:induction} holds for $0\leq k'\leq k$. 
For $k+1$, using the inequality $e^x\geq1+x$, we have
\begin{align}\label{eq-fth-sigma}
\begin{split}
\EBP{\sigma_{k+1}} \overset{\eqref{eq-fth-4}}{\leq}& \left(1-\frac{1}{n}\right)\EBP{(1+\rho_k)^2\left(\sigma_k+\frac{2n\rho_k}{1+\rho_k}\right)} \\
\leq& \left(1-\frac{1}{n}\right)\EBP{(1+\rho_k)^2(\sigma_k+2n\rho_k)}\\
=& \left(1-\frac{1}{n}\right)\EBP{(1+\rho_k)^2\theta_k} \\
\leq&\left(1-\frac{1}{n}\right)\EBP{e^{2\rho_k}\theta_k},
\end{split}
\end{align}
and
\begin{eqnarray}
\label{eq-fth-gammak-update}
    \rho_{k+1} \leq \rho_k\left(\sigma_k+\frac{1}{8}\rho_k\right) \leq \rho_k\left(\sigma_k+2n\rho_k\right) \leq \left(1-\frac{1}{n}\right) 2\exp{\left(2\rho_k\right)} \theta_k \rho_k
\end{eqnarray}
Thus we obtain
\begin{eqnarray*}
  \EBP{ \sigma_{k+1} +2n\rho_{k+1}} &\overset{\eqref{eq-fth-gammak-update},\eqref{eq-fth-sigma}}{\leq}&\left(1-\frac{1}{n}\right)\EBP{\exp{\left(2\rho_k\right)}\theta_k}+\left(1-\frac{1}{n}\right) 4n\EBP{ \exp{\left(2\rho_k\right)} \theta_k \rho_k}\\
                &\leq&\left(1-\frac{1}{n}\right)\EBP{\exp{\left(2\rho_k\right)}\theta_k(1+4n\rho_k)}\\
                &\leq&\left(1-\frac{1}{n}\right) \EBP{\exp{\left(2\rho_k\right)}\exp{\left(4n\rho_k\right)}\theta_k}\\
                &\leq&\left(1-\frac{1}{n}\right)\EBP{\exp{\left(2(1+2n)\rho_k\right)}\theta_k}\\
               &\overset{\eqref{eq:rhokbound}}{\leq}&\left(1-\frac{1}{n}\right) \EBP{\exp{\left(2(1+2n)\left(1-\frac{1}{4\kappa^2}\right)^{k}\rho_0\right)}\theta_k}\\
              &=& \left(1-\frac{1}{n}\right) \exp{\left(2(1+2n)\left(1-\frac{1}{4\kappa^2}\right)^{k}\rho_0\right)} \EBP{\theta_k}
\end{eqnarray*}
Therefore, we have
\begin{eqnarray}
        \EBP{\theta_{k+1}} &\leq& \left(1-\frac{1}{n}\right) \exp{\left(2(1+2n)\left(1-\frac{1}{4\kappa^2}\right)^{k}\rho_0\right)} \EBP{\theta_k} \\
        &\leq& \left(1-\frac{1}{n}\right)^{k+1} \exp{\left(2(1+2n)\rho_0\sum_{i=0}^{k}\left(1-\frac{1}{4\kappa^2}\right)^{i}\right)} \EBP{\theta_0}\\
        &\leq& \left(1-\frac{1}{n}\right)^{k+1} \exp{\left(8\kappa^2(1+2n)\rho_0\right)} \EBP{\theta_0}\\
        &\overset{\eqref{eq:initialrho1},\eqref{eq:initialboundtheta0}}{\leq}& \left(1-\frac{1}{n}\right)^{k+1} 2n\kappa^2
\end{eqnarray}
which proves \eqref{eq:induction}.
Hence, for any $k\geq0$, we have 
\begin{equation*}
    \EBP{\sigma_{k}}\leq \EBP{\theta_k} {\leq}\left(1-\frac{1}{n}\right)^{k}2n\kappa^2.
\end{equation*} 
which implies 
\begin{equation}
  \EBP{\frac{\lambda_{k+1}}{\lambda_k}}=\EBP{\frac{ \rho_{k+1}}{\rho_k}} \overset{\eqref{eq-fth-gammak-update}}{\leq}  \EBP{\sigma_k+2n\rho_k}\leq\EBP{\theta_k}\leq \left(1-\frac{1}{n}\right)^k2n\kappa^2.
\end{equation}

\paragraph{Broyden Family Method:}
The proof for the Broyden method is almost the same as the one in the BFGS method. The reason that produce the different convergence result between Broyden and BGGS is that Lemma \ref{lm:Broydensigma-update} only provides a slower convergence rate
\begin{equation*}
  \EB[\sigma_{k+1}] \leq \left(1-\frac{1}{n\kappa^2}\right)(1+Mr_k)^2\left(\sigma_k + \frac{2nMr_k}{1+M r_k}\right)
\end{equation*}
for Broyden method, rather than 
\begin{equation*}
      \EB[\sigma_{k+1}] \leq \left(1-\frac{1}{n}\right)(1+Mr_k)^2\left(\sigma_k + \frac{2nMr_k}{1+M r_k}\right)
\end{equation*}
for BFGS. 
Thus we can directly replace the term $\left(1-\frac{1}{n}\right)$ to the term $\left(1-\frac{1}{n\kappa^2}\right)$ in the proof of BFGS method and obtain the convergence result of Broyden method
\begin{equation*}
        \EBP{\sigma_{k}}\leq \left(1-\frac{1}{n\kappa^2}\right)^{k}2n\kappa^2,
\end{equation*}
and 
\begin{equation*}
      \EBP{\frac{\lambda_{k+1}}{\lambda_k}}\leq \left(1-\frac{1}{n\kappa^2}\right)^k2n\kappa^2
\end{equation*}
in \eqref{eq-g-fth-result-2}.

\subsubsection{The Proof of SR1 Method}\label{app:saddle-thm-sr1}
Note that we use $\tau_k$ to replace $\sigma_k$ for measuring how well does $\G_k$ approximate $\H_k$. We modify the proof in \citet[Theorem 18]{lin2021faster} to obtain our results.

\paragraph{SR1 Method:}
First, we define the random sequence $\big\{\eta_k \big\}$ as follows
\begin{equation}
\label{eq:defetak}
    \eta_k\defeq \frac{{\rm tr}(\G_k-\H_k)}{{\rm tr}(\H_k)}.
\end{equation}
Since $\G_{k+1}=\rm{SR1}(\widetilde{\G}_k,\H_{k+1},\u_k)$,
we have
\begin{eqnarray*}
\EB_{\u_k}[{\rm tr}(\G_{k+1}-\H_{k+1})]&\overset{\eqref{eq:sr1tau}}{\leq}& (1-\frac{1}{n}){\rm tr}(\widetilde{\G}_k-\H_{k+1})\\
&\overset{\eqref{eq-H-concordant-app}}{\leq}& \left(1-\frac{1}{n}\right){\rm tr}\left((1+Mr_k)\G_k-\frac{1}{1+Mr_k}\H_k\right)\\
&\overset{\eqref{eq:defetak}}{=}&\left(1-\frac{1}{n}\right)\left((1+Mr_k)(1+\eta_k)-\frac{1}{1+Mr_k}\right){\rm tr}(\H_{k})\\
&\leq&\left(1-\frac{1}{n}\right)\left((1+Mr_k)^2(1+\eta_k)-1\right){\rm tr}(\H_{k+1}).
\end{eqnarray*}
Thus, we obtain
\begin{eqnarray*}
\EB_{\u_k}\left[\eta_{k+1}\right]&\leq&\left(1-\frac{1}{n}\right)((1+Mr_k)^2(1+\eta_k)-1)\\
&\leq&\left(1-\frac{1}{n}\right)\left[(1+Mr_k)^2\eta_k+Mr_k(Mr_k+2)\right]\\
&\leq& \left(1-\frac{1}{n}\right)(1+Mr_k)^2(\eta_k+2Mr_k).
\end{eqnarray*}
The last inequality comes from the fact that 
\begin{eqnarray*}
        t(t+2)=t^2+2t\leq2t+4t^2+2t^3=(1+t^2)2t
\end{eqnarray*}
for all $t>0$.

Since we have $\mu^2\I\preceq\H_k\preceq L^2\I$, then
\begin{equation*}
    \sigma_k={\rm tr}((\G_k-\H_k)\H_k^{-1})\leq\frac{1}{\mu^2}{\rm tr}(\G_k-\H_k)=\frac{\eta_k}{\mu^2}{\rm tr}(\H_k)\leq n\eta_k\kappa^2.
\end{equation*}
Using the result of \eqref{eq-sigmaproperty-1}, we have $\G_k \preceq (1+\sigma_k)\H_k$, which implies
\begin{align}\label{ieq:eta-sigma}
    \eta_k \leq 1+\sigma_k.
\end{align}
According to Theorem \ref{thm-lambda-general}, we have
\begin{equation*}
    \lambda_{k+1} \overset{\eqref{ieq:eta-sigma}}{\leq} \left(1-\frac{1}{1+\sigma_k}\right) \lambda_k +\beta \lambda_k^2\leq \sigma_k\lambda_k +\beta\lambda_k^2 \leq (n\kappa^2\eta_k)\lambda_k + \beta \lambda_k^2.
\end{equation*}
And it holds that
\begin{equation*}
    r_k\leq \lambda_k/\mu.
\end{equation*}
Recall that $\rho_k= M\lambda_k/\mu$, and we have
\begin{eqnarray}
    2\rho_{k+1} &\leq& (2n\kappa^2\eta_k)\rho_k +\frac{1}{4}\rho_k^2 \nonumber\\
    \label{eq:sr1gammaupdate}
    &\leq& 2n\kappa^2\rho_k(\eta_k+2\rho_k)\\
    &\leq&\left(1-\frac{1}{n}\right)4n\kappa^2\rho_k(\eta_k+2\rho_k)\nonumber\\
    \label{eq:sr1gamma}
    &\leq&\left(1-\frac{1}{n}\right)(1+\rho_k)^24n\kappa^2\rho_k(\eta_k+2\rho_k),
\end{eqnarray}
and 
\begin{eqnarray}
\label{eq:sr1etak}
    \EBP{\eta_{k+1}}\leq \left(1-\frac{1}{n}\right)\EBP{(1+\rho_k)^2(\eta_k+2\rho_k)}.
\end{eqnarray}
Combing above results, we obtain
\begin{eqnarray}
    \EBP{\eta_{k+1}+2\rho_{k+1}} &\overset{\eqref{eq:sr1etak},\eqref{eq:sr1gamma}}{\leq}&\left(1-\frac{1}{n}\right)\EBP{(1+\rho_k)^2(1+4n\kappa^2\rho_k)(\eta_k+2\rho_k)}\\
    \label{eq:thetakinduc}
    &\leq& \left(1-\frac{1}{n}\right)\EBP{\exp{\left(2\rho_k+4n\kappa^2\rho_k\right)}(\eta_k+2\rho_k)}.
\end{eqnarray}
Let $\theta_k \defeq \eta_k+2\rho_k$, then
\begin{equation*}
    \EBP{\theta_{k+1}}\leq \left(1-\frac{1}{n}\right)\EBP{\exp{\left(2(1+2n\kappa^2)\rho_k\right)}\theta_k}.
\end{equation*}
The initial condition means that
\begin{equation}
\label{eq-sr1-initial_0}
    \rho_0 \leq \frac{\ln2}{8(1+2n\kappa^2)\kappa^2},
\end{equation}
In the following, we use induction to prove the fact that
\begin{equation}
    \EBP{\theta_k}\leq \left(1-\frac{1}{n}\right)^k2\kappa^2.
\end{equation}
For $k=0$, the initial condition $\G_0\preceq\kappa^2\H_0$ means $\eta_0\leq\kappa^2-1$. Thus we obtain
\begin{eqnarray}
\label{eq:theta0bound1}
    \theta_0&=&2\rho_0+\eta_0
            \overset{\eqref{eq-sr1-initial}}{\leq} \frac{\ln2}{4(1+2n\kappa^2)\kappa^2}+(\kappa^2-1)\leq \kappa^2.
\end{eqnarray}
For $k\geq 1$, we have
\begin{align}
    \EBP{\theta_{k+1}}&\overset{\eqref{eq:thetakinduc}}{\leq}\left(1-\frac{1}{n}\right)\EBP{\exp{\left(2(1+2n\kappa^2)\rho_k\right)}\theta_k}\\
    &\overset{\eqref{eq:rhokbound}}{\leq} \left(1-\frac{1}{n}\right)\EBP{\exp{\left(2(1+2n\kappa^2)(1-1/(4\kappa^2))^{k}\rho_0\right)}\theta_k}
  \\
    &=\left(1-\frac{1}{n}\right)\exp{\left(2(1+2n\kappa^2)(1-1/(4\kappa^2))^{k}\rho_0\right)}\EBP{\theta_k}\\
            &\leq \left(1-\frac{1}{n}\right)^{k+1} \exp{\left(2(1+2n\kappa^2)\rho_0\sum_{i=0}^{k}\left(1-\frac{1}{4\kappa^2}\right)^{i}\right)} \EBP{\theta_0}\\
        &\leq \left(1-\frac{1}{n}\right)^{k+1} \exp{\left(8\kappa^2(1+2n\kappa^2)\rho_0\right)} \EBP{\theta_0}\\
         \label{eq:sr1thetabound}
        &\overset{\eqref{eq-sr1-initial_0},\eqref{eq:theta0bound1}}{\leq} \left(1-\frac{1}{n}\right)^{k+1} 2\kappa^2
\end{align}
which implies
\begin{equation*}
   \EBP{ \eta_k} \leq \EBP{\theta_k} \leq \left(1-\frac{1}{n}\right)^k2\kappa^2.
\end{equation*}
Finally, we have
\begin{equation}
\label{eq:sr1lambda}
    \EBP{\frac{\lambda_{k+1}}{\lambda_k}}=\EBP{\frac{\rho_{k+1}}{\rho_k}}\overset{\eqref{eq:sr1gammaupdate}}{\leq} \EBP {n\kappa^2\theta_k}\overset{\eqref{eq:sr1thetabound}}{\leq}\EBP{\left(1-\frac{1}{n}\right)^{k}2n\kappa^4}.
\end{equation}
which is equivalent to \eqref{eq:sr1thm}.

\subsection{The proof of Corollary \ref{co:Broydenconvergence-g}}
\label{sec:proofofco-g}
\begin{proof}
We split the proofs of three algorithms into different subsections.

\subsubsection{Greedy Broyden Family Method}
To obtain the local superlinear convergence rate, we desire the first $k_0$ iterations guarantee $z_{k_0}$ falls into a sufficient small neighbour of the saddle point. Concretely, the condition of $\eqref{eq-general-initial-2}$ means we require
\begin{equation}\label{eq-broyden-local-condition0}
    \left(1-\frac{1}{4\kappa^2}\right)^{k_0}\leq \frac{1}{1+2n}.
\end{equation}
In the view of \eqref{eq-g-fth-result-2}, we also require
\begin{equation}\label{eq-broyden-local-condition}
    \left(1-\frac{1}{n\kappa^2}\right)^{k_0}2n\kappa^2 \leq \frac{1}{2}.
\end{equation}
It is easy to verified that $k_0=n\kappa^2\ln(4n\kappa^2)$ satisfies \eqref{eq-broyden-local-condition0} and \eqref{eq-broyden-local-condition} simultaneously.

For the first $k_0$ iterations, we have
\begin{eqnarray}
\label{eq-broyden-k0-iter}
\lambda_k \leq \left(1-\frac{1}{4\kappa^2}\right)^k\lambda_0.
\end{eqnarray}
According to Theorem \ref{thm-Broyden-superlinear}, we have
\begin{equation*}
    \lambda_{k+k_0+1} \overset{\eqref{eq-g-fth-result-2}}{\leq} \left(1-\frac{1}{n\kappa^2}\right)^{k}\left(1-\frac{1}{n\kappa^2}\right)^{k_0}2n\kappa^2\lambda_{k+k_0}\overset{\eqref{eq-broyden-local-condition}}{\leq} \frac{1}{2}\left(1-\frac{1}{n\kappa^2}\right)^{k}\lambda_{k+k_0}.
\end{equation*}
Thus we obtain
\begin{eqnarray*}
\lambda_{k+k_0}&\leq&\lambda_{k_0}\prod_{i=0}^{k-1}\left[\frac{1}{2}\left(1-\frac{1}{n\kappa^2}\right)^{i}\right]\\
               &=& \left(1-\frac{1}{n\kappa^2}\right)^{\frac{k(k-1)}{2}}\left(\frac{1}{2}\right)^k\lambda_{k_0}    \\
              &\overset{\eqref{eq-broyden-k0-iter}}{\leq} &\left(1-\frac{1}{n\kappa^2}\right)^{\frac{k(k-1)}{2}}\left(\frac{1}{2}\right)^k \left(1-\frac{1}{4\kappa^2}\right)^{k_0}\lambda_0.
\end{eqnarray*}
for all $k\geq 0$.

\subsubsection{Greedy BFGS Method}
Similar to the analysis for Broyden family method, we require the point $\vz_{k_0}$ sufficient close to the saddle point such that
\begin{equation*}
    \left(1-\frac{1}{4\kappa^2}\right)^{k_0}\leq \frac{1}{1+2n}
\qquad \text{and} \qquad
    \left(1-\frac{1}{n}\right)^{k_0}2n\kappa^2 \leq \frac{1}{2},
\end{equation*}
which can be guaranteed by setting $k_0=\max\left\{n,4\kappa^2 \right\}\ln(4n\kappa^2)$.

The remainder of the proof can follow the analysis in last subsection. 
We only need to replace all the term of $\left(1-\frac{1}{n\kappa^2}\right)$ to  $\left(1-\frac{1}{n}\right)$. The reason is that Theorem \ref{thm-Broyden-superlinear}  provides a faster convergence result
\begin{align*}
    \lambda_{k+1}\leq\left(1-\frac{1}{n}\right)^k2n\kappa^2\lambda_{k}
\end{align*}
for BFGS update, rather than 
\begin{align*}
    \lambda_{k+1}\leq\left(1-\frac{1}{n\kappa^2}\right)^k2n\kappa^2\lambda_{k}  
\end{align*}
for Broyden case.

\subsubsection{Greedy SR1 Method}
Similarly, we require the point $\z_{k_0}$ satisfies
\begin{equation*}
    \left(1-\frac{1}{4\kappa^2}\right)^{k_0}\leq \frac{1}{1+2n\kappa^2}
\qquad\text{and}\qquad
    \left(1-\frac{1}{n}\right)^{k_0}2n\kappa^4 \leq \frac{1}{2}.
\end{equation*}
which can be guaranteed by setting $k_0=\max\{n,4\kappa^2\}\ln(4n\kappa^4)$. Following the ideas in previous subsections, we can prove the result for SR1 method by applying Theorem \ref{thm-Broyden-superlinear}.
\end{proof}

\subsection{The Proof of Corollary
\ref{co:Broydenconvergence-random}}
\label{sec:proofofco-r}
\begin{proof}
We split the proofs of three algorithms into different subsections.

We consider the random variable $\X_k=\lambda_{k+1}/\lambda_k, \forall k \geq 0 $ in the following derivation.
\subsubsection{Random Broyden Family Method}
The proof is modified from the proof of Corollary 10 in \citet{lin2021faster},
	Note that $\mX_k \geq 0$, using Markov's inequality, we have for any $\epsilon>0$, 
	\begin{equation}\label{eq:pp}
	\BP\left(\X_k \geq \frac{2n\kappa^2}{\epsilon} \left(1-\frac{1}{n\kappa}\right)^{k} \right) \leq \frac{\EBP{\ \mX_k}}{\frac{2n\kappa^2}{\epsilon} \left(1-\frac{1}{n\kappa^2}\right)^{k}} \stackrel{\eqref{eq-g-fth-result-2}}{\leq} \epsilon.
	\end{equation}
	Choosing $\epsilon_k = \delta (1-q)q^k$ for some positive $q < 1$, then we have 
	\begin{equation*}
	\begin{aligned}
	\BP\left(\mX_k \geq \frac{2n\kappa^2}{\epsilon_k} \left(1-\frac{1}{n\kappa^2}\right)^{k}, \exists  k\in\BN\right) 
	\leq&  \sum_{k=0}^\infty \BP\left(\mX_k
	\geq \frac{2n\kappa^2}{\epsilon_k}\left(1-\frac{1}{n\kappa^2}\right)^{k} \right) \\
	\stackrel{(\ref{eq:pp})}{\leq}&  \sum_{k=0}^\infty \epsilon_k = \sum_{k=0}^\infty \delta (1-q)q^k = \delta.
	\end{aligned}
	\end{equation*}
	Therefore, we obtain with probability $1-\delta$,
	\begin{equation*}
	\mX_k \leq \left(\frac{1-\frac{1}{n\kappa^2}}{q}\right)^{k} \cdot \frac{2n\kappa^2}{(1-q)\delta}, \ \forall k\in\BN.
	\end{equation*}
	If we set $q = 1 - \frac{1}{n^2\kappa^4}$, we could obtain with probability $1-\delta$, for all $k \in \BN$,
	\begin{equation*}
	\mX_k \leq \frac{2n^3\kappa^6}{\delta} \left(1+\frac{1}{n\kappa^2}\right)^{-k} = \frac{2n^3\kappa^6}{\delta} \left(1-\frac{1}{n\kappa^2+1}\right)^{k}.
	\end{equation*}
	Furthermore, it holds with probability $1-\delta$ that
	\begin{equation}\label{eq:r_tel}
	\frac{\lambda_{k+1}}{\lambda_k} \le \frac{4n^3\kappa^6}{\delta} \left(1-\frac{1}{n\kappa^2+1}\right)^{k}, \forall k \in \BN, 
	\end{equation}
	Telescoping from $k$ to $0$ in Eq.~\eqref{eq:r_tel}, we get
	\begin{align*}
	\lambda_{k} &= \lambda_0 \cdot \prod_{i=1}^k \frac{\lambda_i}{\lambda_{i-1}} \stackrel{\eqref{eq:r_tel}}{\le} \lambda_0 \cdot \left(\frac{4n^3\kappa^6}{\delta}\right)^{k} \prod_{i=1}^k \left(1 - \frac{1}{n\kappa^2+1}\right)^{i-1} \\
	&= \left(\frac{4n^3\kappa^6}{\delta}\right)^{k}\left(1-\frac{1}{n\kappa^2+1}\right)^{k(k-1)/2} \lambda_0.
	\end{align*}
	
	Now we combine this result with Theorem \ref{thm:G-linear-convergence},we give the entire period convergence estimator.
	Denote by $k_1\geq 0$ the number of the first iteration, for which
	\[ \left(1-\frac{1}{4\kappa^2}\right)^{k_1} \leq  \frac{1}{2n+1}. \]
	Clearly, $k_1 \leq 4 \kappa^2 \ln(2n+1)$. 
	
	In view of \ref{eq:r_tel} denote by $k_2\geq 0$ the number of the first iteration, for which
	\[ \frac{4n^3\kappa^6}{\delta}\left(1-\frac{1}{n\kappa^2+1}\right)^{k_2} \leq \frac{1}{2}. \]
	Clearly, $k_2 \leq (n\kappa^2+1) \ln(8n^3\kappa^6/\delta)$. 
	
    Thus for all $k\geq 0$, we have
	\[ \lambda_{k_1+k_2+k+1} \leq \frac{4n^3\kappa^6}{\delta} \left(1-\frac{1}{n\kappa^2+1}\right)^{k_2+k} \, \lambda_{k_1+k_2+k} \leq \frac{1}{2} \left(1-\frac{1}{n\kappa^2+1}\right)^{k} \, \lambda_{k_1+k_2+k}. \]
	Therefore,
	\[ \lambda_{k_1+k_2+k} \leq \left(1-\frac{1}{n\kappa^2+1}\right)^{k(k-1)/2} \left(\frac{1}{2}\right)^k \, \lambda_{k_1+k_2}, \]
	and 
	\[ \lambda_{k_1+k_2} \leq \left(1-\frac{1}{4\kappa^2}\right)^{k_1+k_2}\lambda_0. \]
	Finally, choose $k_0=k_1+k_2 = \OM\left(n\kappa^2\ln(n\kappa/\delta)\right)$, we obtain
	\[ \lambda_{k_0+k} \leq  \left(1-\frac{1}{n\kappa^2+1}\right)^{k(k-1)/2} \cdot \left(\frac{1}{2}\right)^k  \cdot \left(1-\frac{1}{4\kappa^2}\right)^{k_0}\lambda_0. \]

\subsubsection{Random BFGS Method}
Similar to the analysis for random Broyden family method, 
we obtain with probability $1-\delta$,
	\begin{equation*}
	\X_{k} \leq \left(\frac{1-\frac{1}{n}}{q}\right)^k \cdot \frac{2n\kappa^2}{(1-q)\delta}, \forall k\in\BN.
	\end{equation*}
	If we set $q = 1 - 1 / n^2$, we could obtain with probability $1-\delta$,
	\begin{equation}
	\label{eq:lambda-randomsuper}
 \lambda_{k+1} \leq  \frac{4n^3\kappa^2}{\delta}\left(1-\frac{1}{n+1}\right)^{k} \lambda_k, \text{ for all } k \in \BN.
	\end{equation}
we require the point $\vz_{k_0}$ where $k_0=k_1+k_2$ sufficient close to the saddle point such that
\begin{equation*}
    \left(1-\frac{1}{4\kappa^2}\right)^{k_1}\leq \frac{1}{1+2n}
\qquad \text{and} \qquad
    \left(1-\frac{1}{n+1}\right)^{k_2}4n^3\kappa^2/\delta \leq \frac{1}{2},
\end{equation*}
which can be guaranteed by setting $k_0=k_1+k_2=\OM\left(\max\left\{n,\kappa^2 \right\}\ln(n\kappa/\delta)\right)$.

The remainder of the proof can follow the analysis in random Broyden methdos. 
We only need to replace all the term of $\left(1-\frac{1}{n\kappa^2+1}\right)$ to  $\left(1-\frac{1}{n+1}\right)$. The reason is that \eqref{eq:lambda-randomsuper}  provides a faster convergence result for random BFGS update, rather than \eqref{eq:r_tel}
for random Broyden case.

\subsubsection{Random SR1 Method}
	Similarly, for SR1 update,
	\begin{equation*}
	\X_{k} \leq \left(\frac{1-\frac{1}{n}}{q}\right)^k \cdot \frac{2n\kappa^4}{(1-q)\delta}, \forall k\in\BN.
	\end{equation*}
	If we set $q = 1 - 1 / n^2$, we could obtain with probability $1-\delta$,
	\begin{equation*}
	\lambda_{k+1} \leq  \frac{4n^3\kappa^4}{\delta}\left(1-\frac{1}{n+1}\right)^{k} \lambda_k, \text{ for all } k \in \BN.
	\end{equation*}
And we require the point $\z_{k_0}$ where $\z_0=\z_1+\z_2$ satisfies
\begin{equation*}
    \left(1-\frac{1}{4\kappa^2}\right)^{k_1}\leq \frac{1}{1+2n\kappa^2}
\qquad\text{and}\qquad
    \left(1-\frac{1}{n+1}\right)^{k_0}\frac{4n^3\kappa^4}{\delta} \leq \frac{1}{2}.
\end{equation*}
which can be guaranteed by setting $k_0=\OM\left(\max\{n,\kappa^2\}\ln(n\kappa/\delta)\right)$. Following the ideas in previous subsections, we can prove the result for random SR1 method.
\end{proof}

\section {The Proofs for Section \ref{sec:extension}}
This section provides the proofs of all results in Section \ref{sec:extension}.
We first introduce the following notations
\begin{align*}
    L' \defeq 2L,\qquad \mu' \defeq \sqrt{2}\mu \qquad\text{and}\qquad \kappa' \defeq \frac{L'}{\mu'}=\frac{2\sqrt{2}L}{\mu}.
\end{align*}
The definition of $\g_k$, $\hH_k$ and $\H$ are similar to
Section \ref{sec-general-convergence}, that is 
\begin{align*}
    \g_k \defeq \g(\z_k),\qquad \hH_k \defeq \hH(\z_k) \qquad\text{and}\qquad \H_k \defeq \big(\hH_k\big)^2.
\end{align*}
The only difference is they are not dependent on specific saddle point problems.

Besides, we also 
use $\sigma_{\max}(\cdot)$ and $\sigma_{\min}(\cdot)$ the present the largest singular value and the smallest singular value of the given matrix respectively. And we define the local neighbor of the solution $\vz^*$ as follows 
\begin{align*}
\Omega^*\defeq\left\{\z: \|\z-\z^*\|\leq D\right\},
\quad\text{where}~~ D=\dfrac{\mu^2}{8L_2L}.
\end{align*}

\subsection{The Proof of Lemma \ref{lem:localNLS}}
\begin{proof}
Assumption \ref{ass:neqsysass2} implies $\sigma_{\min}\left(\H(\z^*)\right)=\mu^2$ and $\sigma_{\max}\left(\H(\z^*)\right)=L^2$. 

Since $\H(\z^*)$ is invertible and symmetric, we have $\H(\z^*)\succ0$ and

Since we have restricted $\z,\z'\in \Omega^*$, it holds that
\begin{equation}
\label{eq:hHnorm}
\big\|\hH(\z)\big\| 
\leq \big\|\hH(\z)-\hH(\z^*)\big\| + \big\|\hH(\z^*)\big\| 
\leq L_2\Norm{\z-\z^*} + \big\|\hH(\z^*)\big\| \leq L_2D+L,
\end{equation}
which implies
\begin{equation}\label{ieq:bound-H-noninlear}
    \H(\z) \preceq (L_2D+L)^2 \I
\end{equation}
for all $\z\in\Omega^*$.
According to \eqref{ieq:bound-H-noninlear}, we have
\begin{eqnarray*}
\|\H(\z)-\H(\z')\|
&=& \Norm{\hH(\z)^2-\hH(\z')^2} \\
&\leq&\Norm{\hH(\z)\left(\hH(\z)-\hH(\z')\right)}+\Norm{\left(\hH(\z)-\hH(\z')\right)\hH(\z')}\\
&\leq& L_2\left(\big\|\hH(\z)\big\|+\big\|\hH(\z')\big\|\right)\|\z-\z'\|\\
&\overset{\eqref{eq:hHnorm}}{\leq}&2L_2(L_2D+L)\|\z-\z'\|.
\end{eqnarray*}
Combining above result with the Weyl's inequality for singular values \cite[Theorem 3.3.16]{horn1994topics}, we have
\begin{eqnarray*}
\sigma_{\min}(\H(\z^*)) - 2L_2(L_2D+L)\|\z-\z^*\| \leq \sigma_{\min}(\H(\z)).
\end{eqnarray*}
Since the definition of $D$ indicates $2L_2(L_2D+L)D\leq\frac{\mu^2}{2}$ and $L_2D\leq L$, we have
\begin{equation*}
   \frac{\mu^2}{2} \I\preceq\H(\z) \preceq 4L^2 \I,
\end{equation*}
and
\begin{equation*}
    \|\H(\z)-\H(\z')\|\leq 4L_2L\|\z-\z'\|
\end{equation*}
\end{proof}
\subsection{The Proof of Lemma \ref{lm-self-concordant2}}
\begin{proof}
According to Lemma \ref{lem:localNLS}, we have
\begin{eqnarray*}
\H(\vz)-\H(\vz') &\stackrel{\eqref{eq:nonlinar-obj-conti}}\preceq& 4L_2L \|\vz-\vz'\| \I \\
            &\stackrel{\eqref{eq:nonlinear-obj-bound}}\preceq& \frac{8L_2L}{\mu^2} \|\vz-\vz'\| \H(\vw)\\
            &=& \frac{8\kappa^2L_2}{L}\|\vz-\vz'\| \H(\vw).
\end{eqnarray*}
for all $\z,\z'\in \Omega^*$.
\end{proof}

\subsection{The Proof of Lemma \ref{lm:zallin}}
\begin{proof}
We prove this lemma by induction.
For $k=0$, it is obviously. 
Suppose the statement holds for all $k'\leq k$. Then for all $k'=0,\dots, k$, we have $\z_{k'}\in \Omega^*$ and $\z_{k'}$ holds that \eqref{eq:nonlinear-obj-bound}, \eqref{eq:nonlinar-obj-conti} and \eqref{eq:Nonlinear-concor}.
By Theorem \ref{thm:G-linear-convergence}, we guarantee 
\begin{equation}\label{nonlinear-matrix-order}
    \H_{k'}\leq \G_{k'}\leq 16\kappa^2 \H_{k'}.
\end{equation}
For $k'=k+1$, according to the proof of Lemma \ref{thm-lambda-general}, we have
\begin{eqnarray}
  \g_{k+1}=\underbrace{(\I -\hH_{k}\G_k^{-1}\hH_{k})\g_k}_{\va_k}+\underbrace{\int_{0}^1\left[\hH(\z_k+s(\z_{k+1}-\z_k))-\hH(\z_k)\right] (\z_{k+1}-\z_k)ds}_{\vb_k}.
\end{eqnarray}
Using Lemma \ref{lm-matrix-neq} and the result of \eqref{nonlinear-matrix-order}, we have
\begin{eqnarray}
\|\I-\hH_k\G_k^{-1}\hH_k\|\leq 1-\frac{1}{16\kappa^2}
\end{eqnarray}
and 
\begin{eqnarray}
\|\vb_k\|\leq\frac{L_2}{\mu^2}\lambda_k^2.
\end{eqnarray}
Combing above results,  we have
\begin{equation}
    \lambda_{k+1}\leq \left(1-\frac{1}{16\kappa^2}\right)\lambda_k+\frac{L_2}{\mu^2}\lambda_{k}^2 \leq \left(1-\frac{1}{32\kappa^2}\right)\lambda_k.
\end{equation}
Thus we always have $\|\z_{k+1}-\z_k\|\leq \frac{1}{\mu}\lambda_{k+1}\leq\lambda_0\leq D$.
By induction, we finish the proof.
\end{proof}

\subsection{The Proof of Theorem \ref{thm:Broydenconvergence-Nonlinear} and Theorem \ref{co::Broydenconvergence-Nonlinear-random}}
The initial condition on $\lambda_0$ and Lemma \ref{lm:zallin} means all the points $\z_k$ generated from our algorithms are located in $\Omega^*$. Thus we can directly use the results of Corollary \ref{co:Broydenconvergence-g} and \ref{co:Broydenconvergence-random} by replacing $\kappa$ and $\mu$ of the Corollary \ref{co:Broydenconvergence-g} and \ref{co:Broydenconvergence-random} into $\kappa'=\frac{L'}{\mu'}=2\sqrt{2}\kappa$ and $\mu'=\frac{1}{\sqrt{2}}\mu$ here.

\section{Experimental Details}\label{sec:expapp}

We provide some details for our experiments in this section.
Our experiments are conducted on a work station with 56 Intel(R) Xeon(R) Gold 6132 CPU @ 2.60GHz and 256GB memory. We use MATLAB 2021a to run the code and the operating system is Ubuntu 20.04.2. 

\subsection{AUC Maximization}
The gradient of the object function at $\z=[\x;\y]=[\w;u;v;y]$ is
\begin{align*}
    \g(\z)=\nabla f(\z)=\frac{1}{m}\sum_{i=1}^{m}\begin{bmatrix}
 \nabla_{\w} f_i(\z)\\
  \nabla_{u}f_i(\z)\\
  \nabla_{v}f_i(\z)\\
 \nabla_{y}f_i(\z)
    \end{bmatrix},
\end{align*}
where
\begin{align*}
\nabla_{\w} f_i(\z) &= \lambda \w +2(1-p)(\w^{\top}\va_i-u-1-y))\va_i\mathbb{I}_{b_i=1}+2p(\w_i^{\top}\va_i-v+1+y)\va_i\mathbb{I}_{b_i=-1},\\
\nabla_{u} f_i(\z) &= \lambda u -2(1-p)(\w^{\top}\va_i-u)\mathbb{I}_{b_i=1}, \\
\nabla_{v} f_i(\z) &= \lambda v - 2p(\w^{\top}\va_i-v)\mathbb{I}_{b_i=-1},\\
\nabla_{y} f_i(\z) &= -2p(1-p)y+2p\w^{\top}\va_i\mathbb{I}_{b_i=-1}-2(1-p)\w^{\top}\va_i\mathbb{I}_{b_i=1}.
\end{align*}

The Hessian-vector of the object function is
\begin{align*}
    \nabla^{2}f(\z) \h =  \hH(\vz)\h=\frac{1}{m}\sum_{i=1}^{m}\begin{bmatrix}
 (\nabla^2f_i(\z)\h)_{\w}\\
    (\nabla^2f_i(\z)\h)_{u}\\
    (\nabla^2f_i(\z)\h)_{v}\\
    (\nabla^2f_i(\z)\h)_{y}
    \end{bmatrix},
\end{align*} 
where $\h=[\h_\vw; \h_u; \h_v; \h_y]$ such that
\begin{align*}
(\nabla^2f_i(\z)\h)_{\w}&= \lambda\h_{\w}+2(1-p)(\inner{\va_i}{\h_{\vw}}-\h_{u}-\h_y)\va_i\mathbb{I}_{b_i=1}+2p(\inner{\va_i}{\h_{\w}}-\h_{v}+\h_{y})\va_i\mathbb{I}_{b_i=-1}, \\
(\nabla^2f_i(\z)\h)_{u}&= -2(1-p)\va_i^{\top} \h_{\vw}\mathbb{I}_{b_i=1}+(\lambda+2(1-p)\mathbb{I}_{b_i=1})\h_{u},\\
(\nabla^2f_i(\z)\h)_{v}&=-2p\va_i^{\top}\h_{\vw}\mathbb{I}_{b_i=-1}+(\lambda+2p\mathbb{I}_{b_i=-1})\h_{v}, \\
(\nabla^2f_i(\z)\h)_{y}&=-2p(1-p)y+2p\va_i^{\top}\h_{\w}\mathbb{I}_{b_i=-1}-2(1-p)\va_i^{\top}\h_{\w}\mathbb{I}_{b_i=1}.
\end{align*}
Note that the Hessian-vector can be achieved in $\OM(nm)$ flops and it guarantees $\OM(nm+n^2)$ complexity for each iteration.

For baseline method extragradient (Algorithm \ref{alg:EG}), we tune the stepsize from $\{0.01,0.05,0.1,0.5\}$. 
For RaBFGSv1-Q (Algorithm \ref{alg:BroydenQuadratic}), we let $\G_0=3\I$ for ``a9a'', ``w8a'' and $\G_0=30\I$ for ``sido0''. 
For RaBFGSv2-Q (Algorithm \ref{alg:BFGSQuadratic}), we let $\G_0=3\I$ for ``a9a'' and  ``w8a''. We do not run RaBFGSv2-Q on ``sido0'' because this algorithm is not efficient for high-dimensional problem as we have mentioned in Remark \ref{rm:BFGS}.  
For RaSR1-Q  (Algorithm \ref{alg:SR1Quadratic}), we let $\G_0=5\I$ for ``a9a'', ``w8a'' and $\G_0=30\I$ for ``sido0''.

The dataset ``sido0'' comes from Causality Workbench~\cite{guyon2008design} and the other datasets can be downloaded from LIBSVM repository \cite{CC01a}.

\begin{algorithm}[t]
\caption{Extragradient Method}\label{alg:EG}
\begin{algorithmic}[1]
\STATE \textbf{Input:} $\vz_0\in\BR^n$, and $\eta>0$. \\[0.15cm]
\STATE \textbf{for} $k=0,1,\dots$ \\[0.15cm] 
\STATE \quad $\vx_{k+1/2}=\vx_k - \eta\nabla_\vx f(\vx_k,\vy_k)$\\[0.15cm]
\STATE \quad $\vy_{k+1/2}=\vy_k + \eta\nabla_\vy f(\vx_k,\vy_k)$\\[0.15cm]
\STATE \quad $\vx_{k+1}=\vx_k - \eta\nabla_\vx f(\vx_{k+1/2},\vx_{k+1/2})$\\[0.15cm]
\STATE \quad $\vy_{k+1}=\vy_k + \eta\nabla_\vy f(\vx_{k+1/2},\vx_{k+1/2})$\\[0.15cm]
\STATE \textbf{end for}
\end{algorithmic}
\end{algorithm}

\subsection{Adversarial Debiasing}
The minimax formulation can be rewritten as 
\begin{equation*}
    \min_{\x\in\RB^d}\max_{y\in\RB}\frac{1}{m}\sum_{i=1}^{m}f_i(\x,y;\va_i,b_i,c_i,\lambda,\gamma,\beta)
\end{equation*}
where $f_i$ is defined as
\begin{equation*}
    f_i(\x,y;\va_i,b_i,c_i,\lambda,\gamma,\beta)=\log(1+\exp(-b_i\va_i^{\top}\x))-\beta \log(1+\exp(-c_i\va_i^{\top}\x y))+\lambda\|\x\|^2-\gamma y^{2}\end{equation*}.
We define
\begin{eqnarray*}
p_i = \frac{1}{1+\exp(b_i\va_i^{\top}\x)} \qquad \text{and}\qquad q_i =\frac{1}{1+\exp(c_iy\va_i^{\top}\x)}.
\end{eqnarray*}
Then the gradient of the object function at $\z=[\x;y]$ is
\begin{eqnarray*}
    \g(\z)=\nabla f(\z)=\frac{1}{m}\sum_{i=1}^{m}\begin{bmatrix}
 \nabla_{\x} f_i(\z)\\
  \nabla_{y}f_i(\z)
    \end{bmatrix},
\end{eqnarray*}
where
\begin{align*}
\nabla_\x f_i(\z) = -p_ib_i\va_i+\beta q_ic_iy\va_i+2\lambda\x
\qquad \text{and}\qquad
\nabla_{y} f_i(\z) = \beta c_i \va_i^{\top} xq_i-2\gamma y.
\end{align*}
The Hessian-vector of the object function is
\begin{equation*}
\nabla^{2}f(\z) \h =  \hH(\vz)\h=\frac{1}{m}\sum_{i=1}^{m}\begin{bmatrix}
 (\nabla^2f_i(\z)\h)_{\x}\\
(\nabla^2f_i(\z)\h)_{y}
\end{bmatrix}.
\end{equation*} 
where $\h=[\h_\vx; \h_y]$ such that
\begin{align*}
(\nabla^2f_i(\z)\h)_{\x} & = (p_i(1-p_i)\va_i^{\top}\h_{\x}-q_i(1-q_i)\beta y^2\va_i^{\top}\h_{\vx})\va_i+2\lambda \h_{\x}-(q_i(1-q_i)\beta y\va_i^{\top}\x-q_i\beta c)\h_{y}\va_i,  \\
(\nabla^2f_i(\z)\h)_{y} & =-\beta y q_i(1-q_i)\va_i^{\top}\x\va_i^{\top}\h_{\x} +q_i\beta c\va_i^{\top}\h_{\x}-q_i(1-q_i)\beta(\va_i^{\top}\x)^2\h_y-2\gamma\h_y.
\end{align*}
The Hessian-vector can be achieved in $\OM(nm)$ flops and it guarantees $\OM(nm+n^2)$ complexity for each iteration.

The experiments are based on the datasets of fairness aware machine learning~\cite{quy2021survey}. 
Following the preprocessing of \citet{platt1998fast,CC01a},
we convert the features of the original datasets into binary for our experiments. Concretely, the continuous features are discretized into quantiles, and each quantile is represented by a binary feature. Also, a categorical feature with $C$ categories is converted to $C$ binary features. More specifically, for the ``adults'' dataset, we transform the 13 features of it into 122 binary features and choose the feature of ``gender'' as the protected feature. For the ``law school'' dataset, we transform the 11 features of it into 379 binary features and choose the feature of ``gender'' as the protected feature. For the ``bank marketing'' dataset, we transform 16 features of it into 3879 binary features and choose ``marital'' as the protected feature.

For experiments, we tune the stepsize of EG from $\{0.01,0.05,0.1,0.5\}$ and run it with 4000, 20000 and 40000 iterations for the ``adults'', ``law school'' and ``Bank market''  respectively as warm up to obtain $\vz_0$ as initial point. 
Then we evaluate all algorithms (including the baseline algorithm EG)  by starting with $\vz_0$ and achieve the result shown in Figure \ref{fig:fairepoch}. 
Since each algorithm has the identical behavior in the warm up stage,
we only present the curves of iterations vs. $\norm{\vg(\vz)}$ and CPU time vs. $\norm{\vg(\vz)}$ after warm up stage in Figure \ref{fig:fairepoch}.

In addition,  we set $\G_0=2\I$ for the proposed quasi-Newton methods.
For RaBFGSv1-G (Algorithm  \ref{alg:BroydenSPP}), we set $M=0$ for ``adults'', $M=1$ for ``law school'' and ``bank market''. For RaBFGSv2-G (Algorithm \ref{alg:BFGSSPP}), we set $M=1$ for ``adults'' and ``law school''.
For RaSR1-G (Algorithm \ref{alg:SR1SPP}), we set $M=1$ for all three datasets.

\end{document}